\newtheorem{theorem}{Theorem}[section]
\newtheorem{lemma}[theorem]{Lemma}
\newtheorem{prop}[theorem]{Proposition}
\newtheorem{cor}[theorem]{Corollary}
\newtheorem{rmk}[theorem]{Remark}
\newtheorem{que}[theorem]{Question}
\newtheorem{defn}[theorem]{Definition}
\newcommand{\Gab}{G  -  a,b}
\newcommand{\maxd}[1]{\Delta(#1)}
\newcommand{\mind}[1]{\delta(#1)}
\newcommand{\R}{\mathbb R}
\title{Graphs of 20 edges are $2$--apex, hence unknotted}
\author{Thomas W.\ Mattman}
\address{Department of Mathematics and Statistics,
California State University, Chico,
Chico, CA 95929-0525}
\email{TMattman@CSUChico.edu}
\subjclass[2000]{Primary 05C10, Secondary 57M15}
\keywords{spatial graphs, intrinsic knotting, apex graphs}
\begin{document}

\begin{abstract}
A graph is $2$--apex if it is planar after the deletion of at most two vertices. Such graphs are
not intrinsically knotted, IK. We investigate the converse, does not IK imply $2$--apex? We determine the simplest possible counterexample, a graph on nine vertices and 21 edges that is neither IK nor $2$--apex. In the process, we show that every graph of 20 or fewer edges is $2$--apex. This provides a new proof that an IK graph must have at least 21 edges. We also classify IK graphs on nine vertices and 21 edges and find no new examples of minor minimal IK graphs in this set.
\end{abstract}

\maketitle

\section{Introduction}
We say that a graph is intrinsically knotted or IK if every tame embedding of the graph in $\R^3$ contains a non-trivially knotted cycle.
Blain, Bowlin, et al.~\cite{BBFFHL} and 
Ozawa and Tsutsumi~\cite{OT} independently discovered an important criterion for intrinsic knotting. Let $H*K_2$ denote the join of the 
graph $H$ and the complete graph on two vertices, $K_2$.

\begin{prop}[\cite{BBFFHL},\cite{OT}]
A graph of the form $H*K_2$ is IK if and only if $H$ is non--planar.
\end{prop}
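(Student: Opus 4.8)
The plan is to prove the two implications separately: for ``$H$ planar $\Rightarrow H*K_2$ not IK'' I would exhibit an explicit knotless embedding (or simply invoke that $H*K_2$ is $2$--apex), and for ``$H$ non--planar $\Rightarrow H*K_2$ IK'' I would reduce, via graph minors, to the fact that $K_7$ and $K_{3,3,1,1}$ are IK. Write $a,b$ for the two vertices of the $K_2$ factor.

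Suppose first that $H$ is planar. The quickest route is to note that $(H*K_2)-\{a,b\}=H$ is planar, so $H*K_2$ is $2$--apex and hence not IK. For a self--contained argument I would instead build a knotless embedding directly, which is easy for this special structure: fix a planar embedding of $H$ in a plane $\Pi\subset\R^3$, arranged so that no vertex or edge point of $H$ lies on a fixed vertical line; put the vertex $a$ above $\Pi$ and $b$ below $\Pi$; join each of $a,b$ to every vertex of $H$ by a straight segment; and route the edge $ab$ out to the side at height $1$, then vertically downward far from $H$, then back in at height $-1$. One checks this is an embedding (two segments from $a$ to distinct points of $\Pi$ meet only at $a$, a segment from $a$ to a point of $\Pi$ meets $\Pi$ only there, and the $a$--segments and $b$--segments can meet only in $\Pi$, i.e.\ only at shared endpoints). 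Then every cycle $C$ is unknotted, by cases: if $C\subseteq H$ it is planar; if $C$ meets exactly one of $a,b$ it bounds the embedded cone disk from that vertex over the corresponding path of $H$; and if $C$ meets both $a$ and $b$, its diagram obtained by projecting to $\Pi$ is layered --- every strand incident to $a$ passes over everything, every strand incident to $b$ passes under everything, and the $H$--edges have no self--crossings --- so $C$ can be untangled one layer at a time and is the unknot.

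Now suppose $H$ is non--planar. By Wagner's theorem $H$ has a $K_5$ or a $K_{3,3}$ minor. Taking the branch sets of such a minor in $H$ together with the singleton branch sets $\{a\}$ and $\{b\}$ exhibits a $K_7$ minor (in the $K_5$ case) or a $K_{3,3,1,1}=K_{3,3}*K_2$ minor (in the $K_{3,3}$ case) of $H*K_2$; the only point to check is that each of $a,b$ is adjacent in $H*K_2$ to every branch set, which holds since $a,b$ are adjacent to all of $H$ and to each other. By Conway--Gordon, $K_7$ is IK, and by Foisy, $K_{3,3,1,1}$ is IK. Finally, a graph with an IK minor is itself IK: a knotless embedding of the ambient graph restricts to a knotless embedding after deleting a vertex or edge, and descends to one after contracting an edge (shrink the edge and observe that every cycle of the minor is isotopic to one of the original, up to a tiny arc). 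Hence $H*K_2$ is IK.

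The main obstacle is the non--planar direction, and specifically the input that $K_{3,3,1,1}$ is intrinsically knotted. Since $K_{3,3}$ is a separate obstruction to planarity with no $K_5$ minor, this case cannot be avoided, and --- unlike $K_7$, where Conway--Gordon's parity argument is short --- the intrinsic knottedness of $K_{3,3,1,1}$ is itself a substantial theorem that one must quote. The remaining work is routine but should be done with care: confirming the two minor constructions, the minor--monotonicity of IK, and, if one takes the explicit--embedding route for the easy direction, the layered--diagram analysis for cycles through both of $a$ and $b$.
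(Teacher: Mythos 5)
The paper offers no proof of this proposition; it is quoted from \cite{BBFFHL} and \cite{OT}, so there is nothing in-text to compare against. Your outline is essentially the argument given in those references: for non--planar $H$, Kuratowski/Wagner gives a $K_5$ or $K_{3,3}$ minor, adjoining the two join vertices (as singleton branch sets adjacent to everything) gives a $K_7$ or $K_{3,3,1,1}$ minor of $H*K_2$, and one quotes minor--monotonicity of IK together with Conway--Gordon for $K_7$ and Foisy's theorem for $K_{3,3,1,1}$; that half of your write-up is correct, and you are right that Foisy's theorem is an unavoidable external input. One caution on the planar direction: your ``quickest route'' ($H*K_2$ is $2$--apex, hence not IK) is circular in the context of this paper, because the implication ``$2$--apex $\Rightarrow$ not IK'' is exactly what this proposition is invoked to establish (a $2$--apex graph $G$ is a subgraph of $(G-a,b)*K_2$ with $G-a,b$ planar). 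So the explicit knotless embedding is not an optional alternative; it, or a citation, is the proof.

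The genuine gap is in that embedding argument, at its crux: for a cycle $C$ through both $a$ and $b$, ``the diagram is layered, so it can be untangled one layer at a time'' is an assertion, not an argument. If $C = a\,v\,P_1\,w\,b\,x\,P_2\,y\,a$ with $P_1,P_2$ disjoint paths in the plane, the under-bridge through $b$ may cross (in projection) both $P_1$ and $P_2$, and then no choice of basepoint and orientation makes the diagram descending or ascending, so the standard ``one layer at a time'' reduction does not apply verbatim. A correct completion: push the interior of $P_1$ slightly above the plane and the interior of $P_2$ slightly below; then $C$ meets the plane transversally in two points, with the part above the plane a single arc (the tent through $a$ together with $P_1$) and the part below a single arc (the tent through $b$ together with $P_2$); each arc is boundary--parallel via its vertical shadow disk (its projection is an embedded arc), so $C$ is a connected sum of two unknots and hence unknotted. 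Separately, cycles containing the edge $ab$ do not satisfy your layering as stated (that edge is incident to both $a$ and $b$ and is neither over nor under everything), so they need their own brief treatment, e.g.\ the same splitting argument after observing that the detour runs above everything at height $1$ and below everything at height $-1$. With those two points repaired (or with the knotless-embedding step explicitly quoted from \cite{BBFFHL} or \cite{OT}), your proof is the standard one and is correct.
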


A graph is called $l$--apex if it becomes planar after the deletion of at most $l$ vertices (and their edges).
The proposition shows that $2$--apex graphs are not IK.

It's known that many non IK graphs are $2$--apex. 
As part of their proof that intrinsic knotting requires 21 edges, 
Johnson, Kidwell, and Michael~\cite{JKM} showed that every triangle-free 
graph on 20 or fewer edges is $2$--apex and, therefore, not knotted. In the current paper, we show 

\begin{theorem} \label{thmain}%
All graphs on 20 or fewer edges are $2$--apex.
\end{theorem}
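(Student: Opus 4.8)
The plan is to reduce, by induction on the number of edges, to a small finite family of graphs and then verify that family directly. Throughout I use two elementary facts: $2$-apex-ness is closed under passing to subgraphs (subgraphs of planar graphs are planar, and deleting two fixed vertices commutes with taking a subgraph), and it is unaffected by parallel edges and loops, so I may take $G$ simple. I would first delete isolated vertices and, if $G$ is disconnected, argue component by component: with at most $20$ edges a graph has at most two non-planar components (each needs at least $9$ edges), and a non-planar graph with so few edges is easily seen to be $1$-apex, so one vertex deletion per non-planar component shows that a disconnected $G$ is $2$-apex. Hence assume $G$ simple and connected. Now run the two standard reductions: if $G$ has a vertex $v$ of degree at most $1$, delete it; if $v$ has degree $2$ with neighbours $x,y$, suppress $v$ (adding the edge $xy$ if it is absent, otherwise just deleting $v$). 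The resulting $G'$ has fewer edges, hence is $2$-apex by induction, and a short case analysis on how the two apex vertices of $G'$ meet $\{v,x,y\}$ shows that a witnessing pair for $G'$ pulls back to one for $G$ (up to a pendant or isolated vertex, or an edge subdivision). So we may assume $\delta(G)\ge 3$.

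Then $3|V(G)|\le 2|E(G)|\le 40$, so $|V(G)|\le 13$. If $|V(G)|\le 6$ then deleting any two vertices leaves at most four, hence a planar graph, so $G$ is $2$-apex. If $|V(G)|=7$ then $G-a-b$ has five vertices and is non-planar only if it equals $K_5$, so $G$ fails to be $2$-apex only if every five of its vertices are mutually adjacent, i.e.\ $G=K_7$, which has $21>20$ edges and is excluded. It remains to handle $8\le|V(G)|\le 13$, and here subgraph-closure lets me assume $|E(G)|=20$: any graph on $8$ to $13$ vertices with fewer edges is a spanning subgraph of one with $20$ edges and minimum degree still at least $3$ (indeed $\binom{8}{2}=28>20$, so there is always a non-edge to add). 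Thus the theorem reduces to showing that every graph with exactly $20$ edges, minimum degree at least $3$, and between $8$ and $13$ vertices is $2$-apex.

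This is a finite check: for each such $G$, test planarity of $G-a-b$ over the at most $\binom{13}{2}=78$ pairs $\{a,b\}$. I expect the execution to be the main obstacle. When $|V(G)|$ is close to $13$ the graph is forced to be almost cubic and is easy, but the range $|V(G)|\in\{8,9,10\}$ contains many graphs and admits no slick argument; so one wants further structural pruning — restricting to higher connectivity, exploiting the sparse complement $\overline{G}$, or eliminating the known minor-minimal non-$2$-apex graphs (e.g.\ $K_7$, with $21$ edges) — or else a computer-assisted enumeration, with care taken that the enumeration is complete. Granting this, Theorem~\ref{thmain} follows; together with the Proposition above it shows that no graph with at most $20$ edges is IK, reproving the $21$-edge lower bound for intrinsic knotting.
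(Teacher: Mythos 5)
Your reductions at the start are fine and match the paper's preliminary lemmas: the degree $\le 2$ reduction (the paper's Lemma~\ref{lemd12}), the bound $3|V(G)|\le 40$ giving $|V(G)|\le 13$, and the trivial cases $|V(G)|\le 7$. But the theorem's actual content is the case $8\le |V(G)|\le 13$ with $\|G\|=20$ and $\mind{G}\ge 3$, and there your proposal stops: you declare it "a finite check," concede that "the execution is the main obstacle," and gesture at structural pruning or a computer enumeration without carrying either out. That is precisely where the paper does its work. It proves an edge lower bound for non--planar graphs with $\mind{G}\ge 1$ (Lemma~\ref{lemnonpl}), so that if $G$ is not $2$--apex then a carefully chosen $\Gab$ is forced to be one of a short explicit list of sparse non--planar graphs (Remark~\ref{rmknonpl} and the figures), and then uses the generalised $K_{3,3}$ lemma (Lemma~\ref{lemgenK33}) to force $a$ and $b$ to have neighbours in each part, driving vertex degrees above the maximum allowed by the degree sequence and yielding a contradiction, case by case for $8,9,\dots,13$ vertices. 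The number of graphs with 20 edges, $\mind{G}\ge 3$ and 8--13 vertices is far too large for an unstructured hand check, and no verified computation is supplied, so as written the proposal does not prove the theorem; it reduces it to the statement that still needs proving.

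A secondary error: your dismissal of disconnected graphs rests on the claim that a non--planar graph with at most 20 edges is $1$--apex, which is false. Take two disjoint copies of $K_{3,3}$ joined by a single edge (19 edges, connected, non--planar): deleting any one vertex leaves an intact $K_{3,3}$, so it is not $1$--apex, and adding a $K_2$ component gives a 20--edge disconnected graph on which your "one deletion per non--planar component" argument fails (the graph is still $2$--apex, but not for the reason you give). This step is fixable -- with two non--planar components each has at most 11 edges and the $1$--apex claim can be checked there, while a single non--planar component just reduces to the connected case -- but it needs to be restated; the paper avoids the issue entirely by never assuming connectivity and instead controlling components through Lemma~\ref{lemnonpl} and its tree-component count.
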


This amounts to a new proof that

\begin{cor}
An IK graph has at least 21 edges.
\end{cor}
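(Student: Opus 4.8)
The corollary is immediate from Theorem~\ref{thmain}: by the Proposition a $2$--apex graph is not IK, so an IK graph is not $2$--apex and hence, by Theorem~\ref{thmain}, has at least $21$ edges. The real content is Theorem~\ref{thmain}, and here is how I would proceed. One first checks that $2$--apexness is minor-closed: deletion clearly preserves it, and for a contraction $G/e$ with $e=uv$, a planar two-vertex deletion $G-\{a,b\}$ yields one for $G/e$ (delete $\{a,b\}$ if $u,v\notin\{a,b\}$, otherwise delete the contracted vertex together with one of $a,b$). Hence a non-$2$--apex graph with the fewest edges is minor-minimal non-$2$--apex, so I may take a putative counterexample $G$ -- at most $20$ edges, not $2$--apex -- to be of this type. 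Standard reductions then make $G$ connected with $\mind{G}\ge 3$: a vertex of degree at most $1$ may be deleted and a degree-$2$ vertex suppressed, each time producing a smaller non-$2$--apex minor, since restoring the vertex keeps $G-\{a,b\}$ planar; a little more cut analysis lets me take $G$ to be $3$-connected. Then $3n\le 2|E(G)|\le 40$ forces $n=|V(G)|\le 13$; moreover $G-\{a,b\}$ must be non-planar for every pair, so $n\ge 7$, and $n=7$ forces $G=K_7$ (every $5$-subset induces $K_5$), which has $21$ edges. So the work is in the range $8\le n\le 13$.

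Next I would invoke Johnson--Kidwell--Michael~\cite{JKM}: every triangle-free graph with at most $20$ edges is already $2$--apex, so I may assume $G$ contains a triangle; dropping the triangle-free hypothesis is exactly the new content of the theorem.

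The main step is a case analysis over the surviving graphs $G$, organized by $n$ and degree sequence (or by the structure around a vertex of minimum degree), in which for each configuration one exhibits a pair $\{a,b\}$ with $G-\{a,b\}$ planar, certifying planarity through the absence of a $K_5$- or $K_{3,3}$-subdivision (Kuratowski). Two devices keep the enumeration finite: the presence of triangles and of degree-$3$ vertices permits $\Delta Y$ and $Y\Delta$ moves, which do not increase the edge count and interact controllably with the $2$--apex property (much as $\Delta Y$ moves do with intrinsic knotting), so that many cases collapse; and the known minor-minimal non-$2$--apex graphs -- $K_7$, $K_{3,3,1,1}$, and their $\Delta Y$/$Y\Delta$ relatives -- all have at least $21$ edges, which indicates where the bound comes from and what must be excluded below it.

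The hard part is this last step. In the triangle-free case the edge budget forces enough sparsity that an apex pair is easy to locate; once triangles are allowed, $G$ can be locally dense, no single counting inequality handles every configuration, and the awkward range is $n=8,\dots,13$ with irregular degree sequences. There one must check, essentially exhaustively -- plausibly aided by the same computation the paper uses to classify the $21$-edge IK graphs on nine vertices -- that \emph{every} such $G$ has a vertex pair whose deletion is planar. Setting up the connectivity reductions carefully and using $\Delta Y$/$Y\Delta$ moves to identify equivalent cases is what makes this enumeration manageable.
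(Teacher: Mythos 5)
Your first sentence is exactly the paper's proof of the Corollary: it is immediate from Theorem~\ref{thmain} together with the fact that $2$--apex graphs are not IK, and if you were only asked for the Corollary, citing the Theorem would suffice. But you go on to claim an argument for Theorem~\ref{thmain} itself, and that argument has genuine gaps rather than being an alternative proof. The preliminary reductions are fine and parallel the paper (minor/vertex-deletion closure of $2$--apex, reduction to $\mind{G}\geq 3$ as in Lemma~\ref{lemd12}, the degree-sum bound giving $|G|\leq 13$, and the easy range $|G|\leq 7$). Everything after that, however, is deferred: the ``essentially exhaustive'' check for $8\leq |G|\leq 13$ \emph{is} the theorem, and you do not carry it out.

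Moreover, the two devices you propose to make that check tractable are not available. First, the assertion that $\Delta Y$ and $Y\Delta$ moves ``interact controllably with the $2$--apex property'' is unsupported; you prove nothing about how either move affects $2$--apexness, and the paper itself cautions that $Y\Delta$ does not even preserve IK (that is how $E_9$ arises from $F_{10}$), so this cannot be waved through. Second, appealing to ``the known minor-minimal non-$2$--apex graphs all having at least 21 edges'' is circular: given minor-closedness, that statement is essentially equivalent to Theorem~\ref{thmain}, and no such classification exists in the literature to cite (nor in this paper). The claimed reduction to $3$-connectedness is also asserted without proof, though it is not the main issue. By contrast, the paper's proof replaces your exhaustive search with a concrete mechanism: choose $a,b$ by degree counting so that $G-a,b$ has few edges, use Lemma~\ref{lemnonpl} and Remark~\ref{rmknonpl} to pin $G-a,b$ down to a short explicit list of sparse non--planar graphs (built from $K_{3,3}$, $K_5$ and tree components), and then use the generalised $K_{3,3}$ Lemma~\ref{lemgenK33} to force adjacencies of $a$ and $b$ that either violate the degree constraints or exhibit a planar $G-a',b'$; this is organized as an induction-style case analysis by vertex count ($\leq 8$, $9$, $10$, $11$, $12$, $\geq 13$), and it does not use \cite{JKM}'s triangle-free result, $\Delta Y$ moves, or any list of minor-minimal non-$2$--apex graphs. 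Your outline would need to supply a substitute for exactly that case analysis to count as a proof.
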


Moreover, we also show

\begin{prop} \label{prop821}%
Every non IK graph on eight or fewer vertices is $2$--apex.
\end{prop}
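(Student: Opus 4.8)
The plan is to prove the contrapositive: a graph $G$ on at most eight vertices that is \emph{not} $2$--apex is IK. Inserting or deleting isolated vertices changes neither property, so we may assume $G$ has exactly eight vertices, and then Theorem~\ref{thmain} forces $21 \le |E(G)| \le 28$. Two facts drive the argument. First, if $H$ is IK and $H$ is a minor of $G$, then $G$ is IK; equivalently, the class of non-IK graphs is minor-closed --- a knotless spatial embedding of $G$ yields a knotless embedding of any minor of $G$, the only mildly delicate case being the contraction of an edge $e$, where a knotted cycle through the contracted point pulls back to a cycle of $G$ through $e$ with the same knot type. Second, $K_7$ is IK (Conway--Gordon), $K_{3,3,1,1}$ is IK (Foisy), and $\nabla Y$ moves preserve intrinsic knotting, so the (unique) $8$--vertex graph $H_8$ obtained from $K_7$ by a $\nabla Y$ move is IK as well; moreover, these three are exactly the minor minimal IK graphs on eight or fewer vertices. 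Granting all this, it suffices to prove the combinatorial claim: \emph{every $8$--vertex graph that is not $2$--apex has $K_7$, $H_8$, or $K_{3,3,1,1}$ as a minor.}

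For that claim I would start from the fact that $G$ is $2$--apex iff $G - u - v$ is planar for some pair of vertices $u,v$, together with the elementary observation that every non-planar graph has at least nine edges. Hence, if $G$ is not $2$--apex then $|E(G)| - d(u) - d(v) + \varepsilon_{uv} \ge 9$ for every pair $u,v$, where $\varepsilon_{uv}$ is $1$ if $uv$ is an edge and $0$ otherwise; so the sum of any two degrees --- in particular the two largest --- is sharply bounded in terms of $|E(G)|$. When $|E(G)|$ is large the graph is dense enough that a short direct argument applies: its non-edges are so few that deleting one suitable vertex, or contracting one suitable edge, leaves $K_7$ (compare $K_8$, $K_8 - e$, and the like), so such a $G$ either is $2$--apex or has a $K_7$ minor. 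The delicate range is $21 \le |E(G)| \le 24$, where the degree bound still permits a short list of degree sequences; there I would run through the possibilities and, for each, either exhibit a planar embedding of $G - u - v$ or locate $H_8$ or $K_{3,3,1,1}$ as a minor. This is a finite check, which I would carry out by hand or, more safely, confirm by a computer enumeration of the $8$--vertex graphs with at least $21$ edges: test $2$--apexness directly and, for each graph that fails the test, certify one of the three IK minors.

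The main obstacle is the completeness of the list $\{K_7, H_8, K_{3,3,1,1}\}$: one must be sure there is no $8$--vertex graph that is not $2$--apex and IK yet contains none of these three as a minor --- equivalently, that there is no minor minimal IK graph on eight vertices lying outside the $\nabla Y$ families of $K_7$ and $K_{3,3,1,1}$. I would settle this either by citing the classification of minor minimal IK graphs of small order (or of few edges), or by arranging the search so that any non--$2$--apex $8$--vertex graph not caught by the three minors is flagged and handled separately: those of join form $H * K_2$ are resolved immediately by the criterion of \cite{BBFFHL} and \cite{OT} (here $H$ is non-planar), and any remaining graphs are dispatched by direct inspection.
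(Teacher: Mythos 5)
Your overall strategy (prove the contrapositive: every non--$2$--apex graph on at most eight vertices contains an IK minor, hence is IK) is sound in outline, and the reductions you make at the start are fine: padding with isolated vertices, invoking Theorem~\ref{thmain} to force $\|G\| \geq 21$, minor-closedness of non-IK, and the intrinsic knottedness of $K_7$, $H_8$, $K_{3,3,1,1}$ are all legitimate. But the proof has a genuine gap: everything hinges on the combinatorial claim that every $8$--vertex graph with $21 \leq \|G\| \leq 28$ edges that is not $2$--apex has one of $K_7$, $H_8$, $K_{3,3,1,1}$ as a minor, and you do not establish it. The degree-sum inequality $\|G\| - d(u) - d(v) + \varepsilon_{uv} \geq 9$ only constrains degree sequences; it does not by itself produce the minors, and the range $21$--$24$ edges comprises hundreds of graphs, so ``carry out by hand or confirm by a computer enumeration'' is a plan, not a proof. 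Your fallback is also shaky: you frame the obstacle as the completeness of the MMIK list, but a non--$2$--apex IK graph missing all three minors would not threaten the proposition at all (it is already IK) --- it only breaks your particular claim --- and the proposed remedy for graphs not of join form ($H*K_2$), namely ``direct inspection,'' is not an argument. So as written the crucial finite verification is deferred rather than done.

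For comparison, the paper uses the same citation you lean on (the classification of intrinsic knotting for $8$--vertex graphs in \cite{CMOPRW} and \cite{BBFFHL}) but in the opposite, much cheaper direction: the classification says that every non-IK graph on eight vertices with at least $21$ edges is a subgraph of one of two specific $25$--edge graphs $G_1$, $G_2$; each of these has two vertices of degree seven whose deletion leaves a (planar) subgraph of $K_6$, so $G_1$, $G_2$ and hence all their subgraphs are $2$--apex. That reduces the whole problem to checking two graphs, instead of enumerating all non--$2$--apex graphs and certifying IK minors in each. If you want to keep your contrapositive route, you must actually execute the enumeration (or supply a structural argument covering the $21$--$24$ edge range); otherwise, the classification you already cite gives the short proof directly.
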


This suggests the following:

\begin{que}
Is every non IK graph $2$--apex?
\end{que}

We answer the question in the negative by giving an example of a graph, $E_9$, having nine vertices and 21 edges that is neither IK nor $2$--apex. 
(We thank Ramin Naimi~\cite{N} for providing an unknotted embedding of $E_9$, which appears as Figure~\ref{figE9} in Section 3.)
Further, we show that no graph on fewer than 21 edges, no graph on fewer than nine vertices, and no other graph
on 21 edges and nine vertices has this property. In this sense, $E_9$ is the simplest possible counterexample to our Question.

The notation $E_9$ is meant to suggest that this graph is a ``cousin'' to the set of 14 graphs derived from $K_7$ by triangle--Y moves (see \cite{KS}). Indeed, $E_9$ arises from a Y--triangle move on the graph $F_{10}$ 
in the $K_7$ family. Although intrinsic knotting is preserved under triangle--Y moves~\cite{MRS}, it is not, in general, preserved under Y--triangle moves. For example, although $F_{10}$ is derived from $K_7$ by triangle--Y moves and, therefore, intrinsically knotted, the graph $E_9$, obtained by a Y--triangle move on $F_{10}$, has an unknotted embedding.

Our analysis includes a classification of IK and $2$--apex graphs on nine vertices and at most 21 edges. Such a graph is $2$--apex unless it is
$E_9$, or, up to addition of degree zero vertices, one of four graphs derived from $K_7$ by triangle--Y moves~\cite{KS}.
(Here $|G|$ denotes the number of vertices 
in the graph $G$ and $\|G \|$ is the number of edges.)

\begin{prop} \label{prop921$2$--apex}%
Let $G$ be a graph with $|G| = 9$ and $\|G \| \leq 21$. If $G$ is not $2$--apex, then $G$ is either $E_9$ or else one of the following IK graphs: $K_7 \sqcup K_1 \sqcup K_1$, $H_8 \sqcup K_1$, $F_9$, or $H_9$.
\end{prop}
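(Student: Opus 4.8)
The plan is to reduce to $\|G\| = 21$ via Theorem~\ref{thmain} and then split on whether $G$ is connected. First note that, since $2$--apex graphs are not IK, each of the four IK graphs named in the statement is automatically not $2$--apex, and $E_9$ is checked directly to admit no planarizing pair of vertices; so all that remains is the converse, namely that any $G$ with $|G| = 9$ and $\|G\| \le 21$ that is not $2$--apex must be one of these five. By Theorem~\ref{thmain} a graph on at most $20$ edges is $2$--apex, so from here on I may assume $\|G\| = 21$.

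\emph{Disconnected $G$.} Deleting isolated vertices affects neither planarity nor $2$--apexness, so I may write $G = G' \sqcup (\text{isolated vertices})$ with $G'$ having no isolated vertex, and then $G$ is not $2$--apex iff $G'$ is not. A disjoint union is $2$--apex exactly when the two permitted deletions can be apportioned among the components so each becomes planar; hence if two or more components of $G'$ were non--planar, each such component would have to be planarized by deleting one vertex, and since a non--planar graph has at least $5$ vertices, two such would use at least $10 > 9$ vertices. So $G'$ has at most one non--planar component; if it has none, $G'$ is planar, a contradiction. Thus exactly one component $C$ of $G'$ is non--planar and $G'$ is not $2$--apex iff $C$ is not $2$--apex. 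If $C = G'$, then $G'$ is connected on at most $8$ vertices, hence IK by Proposition~\ref{prop821}, hence (the IK graphs on at most $8$ vertices with at most $21$ edges being $K_7$ and $H_8$, using Theorem~\ref{thmain} to exclude fewer edges) $G' \in \{K_7, H_8\}$, giving $G \in \{K_7\sqcup K_1\sqcup K_1,\ H_8\sqcup K_1\}$. If $C \ne G'$, then $C$ is connected on at most $7$ vertices (the remaining edge--bearing component uses at least $2$), so by Proposition~\ref{prop821} and Theorem~\ref{thmain} it is IK and equals $K_7$; but $\|K_7\| = 21$ leaves no edges elsewhere in $G'$, a contradiction. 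So the only disconnected counterexamples are the two listed.

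\emph{Connected $G$: reducing the minimum degree.} Next I would show $\delta(G) \ge 3$. If $v$ has degree at most $1$, then $G - v$ has $8$ vertices and at most $20$ edges, so it is $2$--apex by Theorem~\ref{thmain}; re-attaching $v$ with at most one edge to a planarizing residue keeps it planar, so $G$ is $2$--apex. If $v$ has degree $2$ with neighbours $x, y$, apply Theorem~\ref{thmain} to $G^{\circ}$, defined as $G - v$ with the edge $xy$ added if absent, which has $8$ vertices and at most $20$ edges; for a planarizing pair $\{a,b\}$ of $G^{\circ}$ one checks, according as $\{a,b\}$ is disjoint from, meets in one point, or equals $\{x,y\}$, that $G - a - b$ arises from the planar graph $G^{\circ} - a - b$ by subdividing the edge $xy$ (or, when $x$ or $y$ lies in $\{a,b\}$, by re-attaching $v$ with at most one edge), operations preserving planarity; so $G$ is $2$--apex. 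Hence $\delta(G) \ge 3$, and since $\sum_v d(v) = 42$ over $9$ vertices, $G$ has a vertex of degree $3$ or $4$.

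\emph{Connected $G$: degree--sequence pruning and the final enumeration.} For any pair $a \ne b$, non--$2$--apexness forces $G - a - b$ to be a non--planar $7$--vertex graph, so it has at least $9$ edges; as $\|G - a - b\| = 21 - d(a) - d(b) + [ab \in E(G)]$, this gives $d(a) + d(b) \le 12 + [ab \in E(G)] \le 13$ for every pair, which with $3 \le \delta(G)$, $\Delta(G) \le 8$, and $\sum_v d(v) = 42$ leaves only a short list of degree sequences. I would sharpen this by noting (i) that when $\|G - a - b\|$ equals $9$ or $10$ the graph $G - a - b$ must be a subdivision of $K_{3,3}$ or $K_5$ together with isolated vertices, a strong local constraint, and (ii) that if $d(v) = 3$ with $N(v) = \{x,y,z\}$ then deleting any two of $x,y,z$ leaves $v$ of degree at most $1$, so no such pair planarizes $G$, which forces $G - v$ and each $(G - v) - w$ to be non--planar and rules out any $a \in \{x,y,z\}$ with $(G - v) - a$ planar. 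Working through the surviving degree sequences and the graphs realizing them, one should find that every connected $9$--vertex $21$--edge graph is $2$--apex except $F_9$, $H_9$ (IK as triangle--Y descendants of $K_7$, hence not $2$--apex) and $E_9$. This concluding enumeration---for each remaining graph, testing all $\binom{9}{2} = 36$ vertex pairs for planarity of the residual $7$--vertex graph---is the main obstacle: even after the pruning a substantial number of graphs persist, and this is the step most naturally organized with, or independently verified by, machine computation.
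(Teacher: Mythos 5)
Your preliminary reductions are sound and essentially agree with the paper's: Theorem~\ref{thmain} reduces the problem to $\|G\|=21$, and your component and low-degree analysis correctly isolates $K_7 \sqcup K_1 \sqcup K_1$ and $H_8 \sqcup K_1$ (the paper reaches the same point through the degree-zero vertex case together with Proposition~\ref{prop821} and the eight-vertex knotting classification). The genuine gap is everything after that. For the main case you record only the constraint $d(a)+d(b)\le 12$ or $13$ for every pair, sketch a list of candidate degree sequences, and then state that ``working through the surviving degree sequences and the graphs realizing them'' is the main obstacle, to be organized by, or verified with, machine computation. That unexecuted enumeration is precisely the content of the proposition: the family of nine-vertex, twenty-one-edge graphs surviving your pruning is far too large for a by-hand check of all $\binom{9}{2}$ deletions per graph, and no argument is offered that the check terminates in the claimed way, i.e.\ that only $F_9$, $H_9$, and $E_9$ survive. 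As written, the hard direction is asserted, not proved.

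What is missing is a mechanism that converts ``$G-a,b$ is non--planar for every pair $a,b$'' into usable structure on $G$ itself, and that is where the paper's proof lives: one chooses $a,b$ with $\|G-a,b\|\le 11$ (Lemma~\ref{lem9.1} further arranges $\delta(G-a,b)\ge 2$ when every pair leaves at least 11 edges); Remark~\ref{rmknonpl} then identifies $G-a,b$ with one of the two ten-edge graphs of Figure~\ref{figG710} or a short list of the eleven-edge graphs of Figure~\ref{figG711}; and Lemma~\ref{lemgenK33} (the generalised $K_{3,3}$ lemma) forces specified vertices of $G-a,b$ into $N(a)\cap N(b)$, which either contradicts the degree constraints (so $G$ is $2$--apex after all) or reconstructs $G$ uniquely as $F_9$, $H_9$, or $E_9$ (Lemmas~\ref{lem9iii}, \ref{lem9ivtovi}, \ref{lem9viii}). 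Some argument of this kind, replacing graph-by-graph testing with constraints on $N(a)$ and $N(b)$, is what your proposal needs. A smaller inaccuracy: your observation (i) is not quite right, since a non--planar graph on seven vertices with ten edges need not be a subdivision of $K_{3,3}$ or $K_5$ plus isolated vertices ($K_{3,3}$ with a pendant edge is one of the two graphs of Figure~\ref{figG710}), so even the pruning step would need the finer classification of Remark~\ref{rmknonpl}.
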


The knotted graphs are exactly those four descendants of $K_7$:

\begin{prop} \label{prop921IK}%
Let $G$ be a graph with $|G| = 9$ and $\|G \| \leq 21$.
Then $G$ is IK iff it is $K_7 \sqcup K_1 \sqcup K_1$, $H_8 \sqcup K_1$, $F_9$, or $H_9$.
\end{prop}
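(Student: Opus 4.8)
The plan is to read off Proposition~\ref{prop921IK} from the structural classification in Proposition~\ref{prop921$2$--apex} combined with the edge bound for intrinsically knotted graphs; by the time we reach this statement, essentially all of the substance is already in place.

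First I would handle the ``if'' direction. The four graphs $K_7 \sqcup K_1 \sqcup K_1$, $H_8 \sqcup K_1$, $F_9$, and $H_9$ are each IK. Indeed $K_7$ is IK (Conway--Gordon), while $H_8$, $F_9$, and $H_9$ are among the graphs obtained from $K_7$ by triangle--Y moves \cite{KS}, and such moves preserve intrinsic knotting \cite{MRS}; adjoining degree-zero vertices to an IK graph plainly leaves it IK. Hence each of the four named graphs is IK.

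For the ``only if'' direction, suppose $G$ is IK with $|G| = 9$ and $\|G\| \leq 21$. By the Corollary above, an IK graph has at least $21$ edges, so $\|G\| = 21$. As observed in the introduction, $2$--apex graphs are not IK, so $G$ is not $2$--apex. Proposition~\ref{prop921$2$--apex} then forces $G$ to be one of $E_9$, $K_7 \sqcup K_1 \sqcup K_1$, $H_8 \sqcup K_1$, $F_9$, or $H_9$. Finally I would eliminate $E_9$: the embedding exhibited in Figure~\ref{figE9} contains no knotted cycle, so $E_9$ is not IK. Therefore $G$ must be one of the four remaining graphs, which completes the proof.

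The honest remark is that there is no genuine obstacle at this stage: the real content lies upstream, in Proposition~\ref{prop921$2$--apex} and in the verification that the displayed embedding of $E_9$ is in fact knot-free. The only thing to check here is consistency of the list --- that $E_9$ is not isomorphic to any of the four IK graphs, and that, after padding with isolated vertices, each of $K_7 \sqcup K_1 \sqcup K_1$, $H_8 \sqcup K_1$, $F_9$, and $H_9$ indeed has nine vertices and exactly $21$ edges --- so that the two directions of the iff match up cleanly.
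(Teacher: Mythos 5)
Your proposal is correct and follows essentially the same route as the paper: the forward direction cites the $K_7$ family results of \cite{KS}, and the converse combines the 20-edge/$2$--apex result with Proposition~\ref{prop921$2$--apex} and the unknotted embedding of $E_9$ in Figure~\ref{figE9}. The only cosmetic difference is that you invoke the 21-edge corollary where the paper cites Proposition~\ref{prop920} directly, which amounts to the same thing.
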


In particular, we find that there are no new minor minimal IK graphs in
the set of graphs of nine vertices and 21 edges.

We remark that a result of Sachs~\cite{S} suggests a similar analysis of $1$--apex  graphs. A graph is intrinsically linked (IL) if every tame embedding includes a pair of non-trivally linked cycles.

\begin{prop}[Sachs]
A graph of the form $H*K_1$ is intrinsically linked if and only if $H$ is non--planar.
\end{prop}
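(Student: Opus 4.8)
The statement is classical (due to Sachs); the plan is to prove the two implications separately, writing $v$ for the cone vertex of $H*K_1$, so that $v$ is joined to every vertex of $H$.

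\textbf{Planar $H$ implies $H*K_1$ is not IL.} Here I would exhibit an explicit linkless embedding. Put a planar drawing of $H$ on a round $2$--sphere $\Sigma$ centred at a point $p$, place $v$ at $p$, and join $v$ to each vertex of $H$ by the straight radial segment. Distinct radial segments meet only at $p$, and each meets $\Sigma$ only at its outer endpoint, so this is an embedding of $H*K_1$ in $\R^3$. Let $C_1, C_2$ be vertex--disjoint, hence disjoint, cycles. If neither passes through $v$, both lie on $\Sigma$ and so form a trivial, in particular split, link. Otherwise say $v \in C_1$; then $C_1 = \overline{va} \cup P \cup \overline{bv}$, where $a,b$ are the neighbours of $v$ on $C_1$ and $P$ is an arc of $H$ on $\Sigma$ from $a$ to $b$, while $C_2 \subseteq \Sigma$. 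The radial cone on $P$ with apex $p$ is an embedded disk $D$ with $\partial D = C_1$ and $D \cap \Sigma = P$; since $C_2 \subseteq \Sigma$ is disjoint from $P \subseteq C_1$, we get $D \cap C_2 = \emptyset$, so $C_1$ bounds a disk missing $C_2$ and the link is split. Hence the embedding is linkless and $H*K_1$ is not IL. (Equivalently: a planar $H$ makes $H*K_1$ an apex graph, and apex graphs are linklessly embeddable; the construction above is the direct witness.)

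\textbf{Non--planar $H$ implies $H*K_1$ is IL.} Suppose $H$ is non--planar. By Wagner's form of Kuratowski's theorem, $H$ has a $K_5$ minor or a $K_{3,3}$ minor; fix branch sets for it inside $V(H)$. Adjoining $\{v\}$ as one further branch set --- which is joined, through $v$, to a vertex of every other branch set --- exhibits a $K_6$ minor of $H*K_1$ in the first case and a $K_{3,3,1} = K_{3,3}*K_1$ minor in the second. Both $K_6$ and $K_{3,3,1}$ are intrinsically linked (Conway--Gordon, Sachs; they are members of the Petersen family), and IL is a minor--monotone property, so $H*K_1$ is IL. One can argue equally with subdivisions of $K_5$ and $K_{3,3}$ in place of minors, invoking Kuratowski's theorem directly.

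The only step that is not an immediate appeal to a standard fact is the flatness verification in the first implication: checking that a cycle through the cone vertex bounds an embedded spanning disk that the radial structure automatically keeps clear of every other cycle of the graph. The remaining ingredients --- Kuratowski--Wagner, minor monotonicity of IL, and the intrinsic linking of $K_6$ and $K_{3,3,1}$ --- are classical and would simply be cited.
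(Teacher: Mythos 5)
Your proposal is correct. Note, though, that the paper does not prove this proposition at all: it is stated as a quoted result and attributed to Sachs~\cite{S}, so there is no in-paper argument to compare against. Your two implications are the standard ones. For the reverse direction, coning a planar drawing of $H$ on a sphere from the centre and checking that any cycle through the cone vertex bounds the radial cone disk, which is automatically disjoint from every other cycle of the graph, does give a linkless (indeed flat) embedding of $H*K_1$; the only point to be careful about is exactly the one you flag, that $D\cap\Sigma=P$ so a second cycle, lying on $\Sigma$ and disjoint from $C_1$, misses $D$ entirely. For the forward direction, Kuratowski--Wagner gives a $K_5$ or $K_{3,3}$ minor of $H$, the cone vertex upgrades it to a $K_6$ or $K_{3,3,1}$ minor of $H*K_1$, and intrinsic linking of these Petersen-family graphs (Conway--Gordon, Sachs) together with minor monotonicity of IL finishes the argument; all three ingredients are classical and appropriately cited rather than reproved. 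So the write-up is a complete and essentially canonical proof of the quoted result, parallel in spirit to the $H*K_2$ criterion of \cite{BBFFHL} and \cite{OT} that the paper uses for intrinsic knotting.
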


It follows that $1$--apex graphs are not IL and one can ask about the converse. A computer search suggests that the simplest counterexample (a graph that is neither IL nor $1$--apex) in terms of vertex count is a graph on eight vertices and 21 edges whose complement is the disjoint union of $K_2$ and a six cycle. B\"ohme also gave this example as graph $J_1$ in \cite{B}.
In terms of the number of edges, the disjoint union of two $K_{3,3}$'s is a counterexample of eighteen edges. It's straightforward to verify, using methods similar to those presented in this paper, that a counterexample must have at least eight vertices and at least 15 edges. Beyond these observations, we leave open the 

\begin{que}
What is the simplest example of a graph that is neither IL nor $1$--apex?
\end{que}

The paper is organized into two sections following this introduction. 
In the first we prove Theorem~\ref{thmain}.
In the second we prove Propositions~\ref{prop821}, \ref{prop921$2$--apex},
and \ref{prop921IK}.

\section{Graphs on twenty edges}
In this section we will prove Theorem~\ref{thmain},
a graph of twenty or fewer edges is $2$--apex. We will use
induction and break the argument down as a series of six propositions
that, in turn, treat graphs with eight or fewer vertices, nine vertices, ten vertices,  eleven vertices, twelve vertices, and thirteen or more vertices. Following a first subsection where we introduce some useful 
definitions and lemmas, we devote one subsection to each of 
the six propositions.

\subsection{Definitions and Lemmas}
In this subsection we introduce several definitions and three lemmas.
The first lemma and the definitions that precede it are based on
the observation that, in terms of topological properties such 
as planarity, $2$--apex, or IK, vertices of degree less than three can be
ignored.

Let $N(c)$ denote the neighbourhood of the vertex $c$.

\begin{defn}
Let $c$ be a degree two vertex of graph $G$.
Let $N(c) = \{d,e\}$. {\bf Smoothing} $c$ means replacing the vertex $c$ and edges $cd$ and $ce$ with the edge $de$ to obtain a new (multi)graph $G'$. If $de$ was already an edge of $G$, we can remove one of the copies of $de$ to form the simple graph $G''$. We will say $G''$ is obtained from $G$ by {\bf smoothing and simplifying} at $c$.
\end{defn}

We will use $\mind{G}$ to denote the minimal degree of $G$, i.e., the least degree among the vertices of $G$.

\begin{defn}
Let $G$ be a graph. The multigraph $H$ is the {\bf topological simplification}
of $G$ if $\mind{H} \geq 3$ and $H$ is obtained from $G$ by a sequence of the following three moves:
delete a degree zero vertex; delete a degree one vertex and its edge; and smooth a degree two vertex.
\end{defn}

\begin{defn}
Graphs $G_1$ and $G_2$ are {\bf topologically equivalent} if their
topological simplifications are isomorphic.
\end{defn}

The following lemma demonstrates that in our induction it will be enough to consider graphs of minimal degree at least three, $\mind{G} \geq 3$.
For $a$ a vertex of graph $G$, let $G - a$ denote the induced subgraph on the vertices other than $a$: $V(G) \setminus \{a\}$. Similarly, $\Gab$ and $G - a,b,c$ will denote induced subgraphs on $V(G) \setminus \{a,b\}$ and 
$V(G) \setminus \{a,b,c\}$.

\begin{lemma}
\label{lemd12}%
Suppose that every graph with $n>2$ vertices and at most $e$ edges is $2$--apex.
Then the same is true for every graph with $n+1$ vertices, at most $e+1$ (respectively, $e$) edges, and a vertex of
degree one or two (respectively, zero).
\end{lemma}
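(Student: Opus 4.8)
The plan is to reduce a graph $G$ on $n+1$ vertices with a low-degree vertex to a smaller graph covered by the hypothesis, and then transfer the $2$--apex property back up. The key observation is that deleting or smoothing a vertex of degree at most two changes planarity-type properties in a controlled way: a graph is $2$--apex if and only if its topological simplification is, and more directly, if $H$ is obtained from $G$ by deleting a degree-zero or degree-one vertex or by smoothing a degree-two vertex, then $G$ is $2$--apex iff $H$ is. So first I would record this equivalence as the engine of the argument, handling each of the three moves.

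Concretely, suppose $G$ has $n+1$ vertices and a vertex $v$ of degree zero, one, or two. In the degree-zero case, let $H = G - v$; then $H$ has $n$ vertices and at most $e$ edges, so by hypothesis $H$ is $2$--apex, say planar after deleting vertices $a,b$; then $G - a,b$ is $H$ plus an isolated vertex, still planar, so $G$ is $2$--apex. In the degree-one case, let $H = G - v$ (removing $v$ and its one edge); then $H$ has $n$ vertices and at most $e$ edges (since $G$ had at most $e+1$), so $H$ is $2$--apex via some pair $a,b$, and $G - a,b$ is either $H - a,b$ with a pendant edge or vertex reattached, or $H-a,b$ itself if $v$'s neighbour was among $a,b$ — in all cases planar, so $G$ is $2$--apex. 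In the degree-two case, let $N(v) = \{d,e\}$ and let $H = G'$ (or $G''$) be obtained by smoothing (and simplifying) at $v$; then $H$ has $n$ vertices and at most $e$ edges (smoothing removes one vertex and at least one edge), so it is $2$--apex, and a planar embedding of $H - a,b$ lifts to one of $G - a,b$ by subdividing the edge $de$ back into a path through $v$ — provided the apex vertices $a,b$ are not $v$ itself, which they cannot be since $v \notin V(H)$; a small check is needed when $d$ or $e$ lies among $\{a,b\}$, but then the smoothed edge $de$ was already deleted and we simply add back the pendant vertex $v$.

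The one genuinely delicate point is the multigraph issue in the smoothing case: after smoothing, $de$ may already be an edge, so $G'$ is a multigraph, and we pass to the simple graph $G''$ by deleting a duplicate edge. I would note that $G''$ is still $2$--apex iff $G'$ is (a multigraph is $2$--apex iff its underlying simple graph is, since planarity ignores multiplicities) and that $G''$ has $n$ vertices and at most $e$ edges, so the hypothesis applies to $G''$; then reversing the steps (duplicate the edge $de$, then subdivide) recovers a planar embedding of $G - a,b$. This is the step I expect to require the most care, but it is routine once one observes that every move involved — deleting a pendant or isolated vertex, smoothing a degree-two vertex, removing a parallel edge — both (i) does not increase the vertex or edge count in the direction we need and (ii) preserves the $2$--apex property in both directions. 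Assembling these three cases gives the lemma.
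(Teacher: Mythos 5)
Your argument is correct and follows essentially the same route as the paper: case analysis on whether the low-degree vertex has degree zero, one, or two, applying the hypothesis to the deleted or smoothed-and-simplified graph on $n$ vertices, and lifting the planar embedding of $H-a,b$ back to $G-a,b$ (noting $a,b \neq v$ since they lie in $V(H)$). Your extra care about the multigraph/simplification issue and the case where a neighbour of $v$ is among $\{a,b\}$ only spells out details the paper leaves implicit; the one-direction implication is all that is needed, and your stated equivalence, while unnecessary, is harmless.
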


\begin{proof}
Let $G$ have $n+1$ vertices and $e'$ edges where $n > 2$ and $e' \leq e+1$. 

If $G$ has a degree zero vertex, $c$, we assume further that $e' \leq e$. In this case, deleting $c$ results in a $2$--apex graph 
$G-c$,  i.e., there are vertices $a$ and $b$ such that $G-a,b,c$ is planar. This implies $\Gab$ is also planar so that $G$ is $2$--apex.

If $G$ has a vertex $c$ of degree one, we may delete it (and its edge) to obtain a graph, $G - c$ on $n$ vertices with $e'-1$ edges. 
Again, by hypothesis,  $G-c$ is $2$--apex, so that $G-a,b,c$ is planar for an appropriate choice of $a$ and $b$. 
This means  $\Gab$ is also planar so that $G$ is $2$--apex.

If $G$ has a vertex $c$ of degree two, smooth and simplify that vertex
to obtain the graph $G'$ on $n$ vertices and 
$e'-1$ or $e'-2$ edges. By assumption, there are vertices $a,b$ in $V(G')$ such that $G'-a,b$ is planar. Since $V(G') = V(G) \setminus \{c\}$, $a$ and $b$ are also vertices in $G$. Notice that $\Gab$ is again planar so that $G$ is $2$--apex.
%Notice that $G - a,b$ is also planar. Indeed, if $\{ a, b \} \cap \{d, e \}=\emptyset$ then $de$ is an edge in $G' - a,b$. In a planar embedding of %$G'-a,b$ we can replace $de$ (or one of two ``parallel'' copies of $de$) with $dc$ and $ce$ to realise a planar embedding of $G-a,b$. If there is one vertex common to $\{a,b\}$ and $\{d,e \}$, say $a = d$, then $cd$ is not an edge of $G - a,b$. By adding the edge $ce$ and the vertex $c$ to a planar embedding of $G'-a,b$, we can realise a planar embedding of $G-a,b$. %Finally, if $\{a,b\}$ and $\{d,e\}$ coincide, a planar embedding of $G'-a,b$ can be made into a planar
%embedding of $G-a,b$ simply by placing the isolated vertex $c$ somewhere in the plane.
\end{proof}

In showing that all graphs of 20 or fewer edges are $2$--apex, we will frequently investigate a graph $G$ of 20 edges and delete
two vertices to obtain $G' = \Gab$ which we may assume to be non--planar. By the previous lemma, we can assume $G$ has no vertices of degree less than three
(i.e., $\mind{G} \geq 3$). It follows that  $\mind{G'} \geq 1$ 
The following lemma characterises the graphs $\Gab$ of this form.

In the proof we will make use of the Euler characteristic 
$\chi(G) = |G| - \| G \|$, where $| G |$ is the number of vertices and $\| G \|$ the number of edges.

\begin{lemma} 
\label{lemnonpl}%
Let $G$ be a non--planar graph on $n$ vertices, where $n \geq 6$, with $\mind{G} \geq 1$. Then $G$ has at least $n+3 - \lfloor (n-6)/2 \rfloor$ edges.
\end{lemma}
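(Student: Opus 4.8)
The plan is to combine Kuratowski's theorem with a count over the connected components of $G$. Since a graph is planar iff all of its components are, some component $G_0$ of $G$ is non--planar, and $G_0$ contains a subgraph $S$ that is a subdivision of $K_5$ or of $K_{3,3}$. A subdivision of $K_5$ on $s$ vertices has exactly $s+5$ edges (with $s \geq 5$), and a subdivision of $K_{3,3}$ on $s$ vertices has exactly $s+3$ edges (with $s \geq 6$), since each subdivision step adds one vertex and one edge. The key point is that a connected non--planar graph pays at least one further edge for every vertex lying outside its Kuratowski subgraph: if $G_0$ has $n_0$ vertices and $S$ uses $s$ of them, then contracting $S$ to a single vertex leaves a connected graph on $n_0 - s + 1$ vertices, hence at least $n_0 - s$ edges, and none of these edges lie in $S$. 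So $\|G_0\| \geq n_0 + 5$ when $S$ is a $K_5$--subdivision and $\|G_0\| \geq n_0 + 3$ when $S$ is a $K_{3,3}$--subdivision.

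Next I would bring in the remaining components. Because $\mind{G} \geq 1$ there are no isolated vertices, so each of the $t$ components other than $G_0$ is connected on at least two vertices and contributes at least $n_i - 1 \geq 1$ edges, and in particular $n \geq n_0 + 2t$. Summing, $\|G\| \geq \|G_0\| + \sum_{i=1}^{t}(n_i - 1) = \|G_0\| + (n - n_0) - t$.

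Finally I would split on the type of $S$. If $S$ is a subdivision of $K_{3,3}$, then $n_0 \geq 6$, so $t \leq \lfloor (n - n_0)/2\rfloor \leq \lfloor (n-6)/2\rfloor$, and $\|G\| \geq (n_0 + 3) + (n - n_0) - t = n + 3 - t \geq n + 3 - \lfloor (n-6)/2\rfloor$, which is exactly the claimed bound. If instead $G$ contains a subdivision of $K_5$ (a case I would invoke whether or not $G$ also contains a $K_{3,3}$--subdivision), then $n_0 \geq 5$, so $t \leq \lfloor (n-5)/2\rfloor$ and $\|G\| \geq (n_0 + 5) + (n - n_0) - t = n + 5 - t$; since $\lfloor (n-5)/2\rfloor \leq \lfloor (n-6)/2\rfloor + 1$, this is at least $n + 4 - \lfloor (n-6)/2\rfloor$, one better than required.

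The only step that needs genuine care — though it is short, not deep — is the edge count for $G_0$: one must check that the edges attaching the leftover vertices to $S$ are disjoint from $E(S)$ (the contraction argument handles this) and that the floor arithmetic comes out exactly on $n + 3 - \lfloor (n-6)/2\rfloor$ in the $K_{3,3}$ case. As a sanity check, the bound is sharp: $K_{3,3}$ together with a near--perfect matching on the remaining $n-6$ vertices has $\mind{} \geq 1$ and exactly $n + 3 - \lfloor (n-6)/2\rfloor$ edges.
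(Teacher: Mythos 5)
Your proof is correct. Structurally it follows the same decomposition as the paper — isolate one non--planar component, show it carries at least $n_0+3$ (or $n_0+5$) edges, then charge every other component at least $n_i-1\geq 1$ edges and use $\mind{G}\geq 1$ to bound the number of such components by $\lfloor (n-n_0)/2\rfloor$ — but the key connected-case bound is obtained by a genuinely different mechanism. The paper takes a $K_{3,3}$ or $K_5$ \emph{minor} and argues that the Euler characteristic $\chi=|G|-\|G\|$ can only increase along a sequence of deletions and contractions kept connected and simple, so $\chi(G_0)\leq -3$ (resp.\ $-5$); you instead take a Kuratowski \emph{subdivision} $S$, use the exact counts $\|S\|=s+3$ or $s+5$, and recover the vertices of $G_0$ outside $S$ by contracting $S$ to a point and counting a spanning tree, which yields $n_0-s$ further edges disjoint from $E(S)$. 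Your version is arguably more elementary and self-contained: it avoids the (slightly delicate) claim that the minor sequence can be routed through connected simple graphs, at the cost of invoking Kuratowski in subdivision rather than minor form; the arithmetic in the $K_5$-only case ($\lfloor (n-5)/2\rfloor \leq \lfloor (n-6)/2\rfloor +1$, giving $n+4-\lfloor (n-6)/2\rfloor$) matches the paper's corresponding estimate. One cosmetic remark on your sharpness aside, which is not part of the lemma: for odd $n$ a near--perfect matching on the remaining $n-6$ vertices leaves an isolated vertex, violating $\mind{G}\geq 1$; replace one $K_2$ by a path on three vertices to get an extremal example with exactly $n+3-\lfloor (n-6)/2\rfloor$ edges.
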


\begin{proof}
First observe that if $G$ is connected, $G$ will have at least 
$n+3$ edges. Indeed, by Kuratowski's theorem, $G$ must have $K_5$ or $K_{3,3}$ as a minor.
If there is a $K_{3,3}$ minor, then we can construct $K_{3,3}$ from $G$
by a sequence of edge deletions and contractions. Since both
$G$ and $K_{3,3}$ are connected, we can arrange for the sequence to pass through a sequence of connected graphs. We will delete any multiple edges that result from an edge contraction so that the intermediate graphs
are also simple. To complete the argument notice that an edge deletion
or contraction can only increase the Euler characteristic. As $\chi(K_{3,3}) = -3$, we conclude that $\chi(G) \leq -3$,
whence $\| G \| \geq n+3$. If, instead $G$ has a $K_5$ minor, 
then, since $\chi(K_5) = -5$, 
a similar argument shows that $\|G \| \geq n+5 > n+3$.

If $G$ is not planar, then it must have a connected component $G'$ for 
which $\chi(G') \leq -3$. Additional components will increase
$\chi(G)$ only if they are trees, i.e., 
$\chi(G) \leq -3 + T$ where $T$ denotes the number of tree components 
of $G$. If $G'$ has at least 
six vertices, then,  as a tree component requires at least two vertices (recall that $\mind{G} \geq 1$), we see that $T \leq \lfloor (n-6)/2 \rfloor$. Thus $\|G\| \geq n+ 3 - T \geq n+3 - \lfloor (n-6)/2 \rfloor$,
as required. If $G'$ doesn't have six vertices, then $G' = K_5$
and $\chi(G') = -5$. In this case, a similar argument shows that
$\|G \|  \geq n+5 - \lfloor (n-5)/2 \rfloor > n+3 - \lfloor (n-6)/2 \rfloor$. 
\end{proof}

\begin{table}[h]
\begin{tabular}{c|cccc} 
    & 9 & 10 & 11 & 12\\ \hline
6   & 1 & 1 \\
7   & 0 & 2 & 9 \\
8   & 0 & 1 & 11 \\
9   & 0 & 0 & 3 \\
10 & 0 & 0 & 1 & 15 \\ 
11 & 0 & 0 & 0 & 3 \\
\multicolumn{2}{c}{ } \\
%12& 0 & 0 & 0 & 1  
\end{tabular}
\caption{\label{tblnonpl}%
A count of non--planar graphs with $\mind{G} \geq 1$.  Columns are labelled by the number of edges and rows by the number of vertices.}
\end{table}

\begin{rmk}
\label{rmknonpl}
% The unique v12e12 graph is K_33 \sqcup K_2 \sqcup K_2 \sqcup K_2
Table~\ref{tblnonpl} gives the number of graphs satisfying the hypotheses of the lemma. 
Moreover, using the reasoning outlined in the proof of the lemma, 
we can characterise such a non--planar graph $G$ according to the number of vertices as follows.

If $G$ has six vertices and nine edges, then $G = K_{3,3}$. 
If $|G| = 6$ and $\| G \| = 10$, then $G=K_{3,3}$ with one additional edge. 

\begin{figure}[ht]
\begin{center}
\includegraphics[scale=0.33]{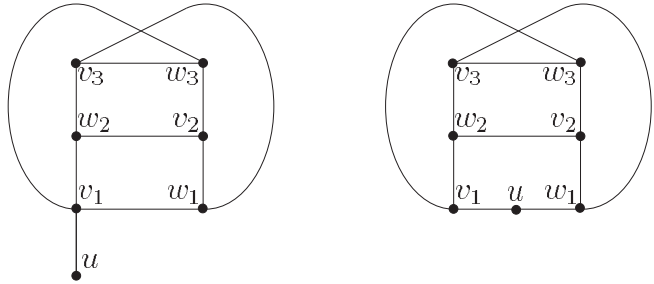}
\caption{Two non--planar graphs with seven vertices and ten edges.}\label{figG710}
\end{center}
\end{figure}

\begin{figure}[ht]
\begin{center}
\includegraphics[scale=0.63]{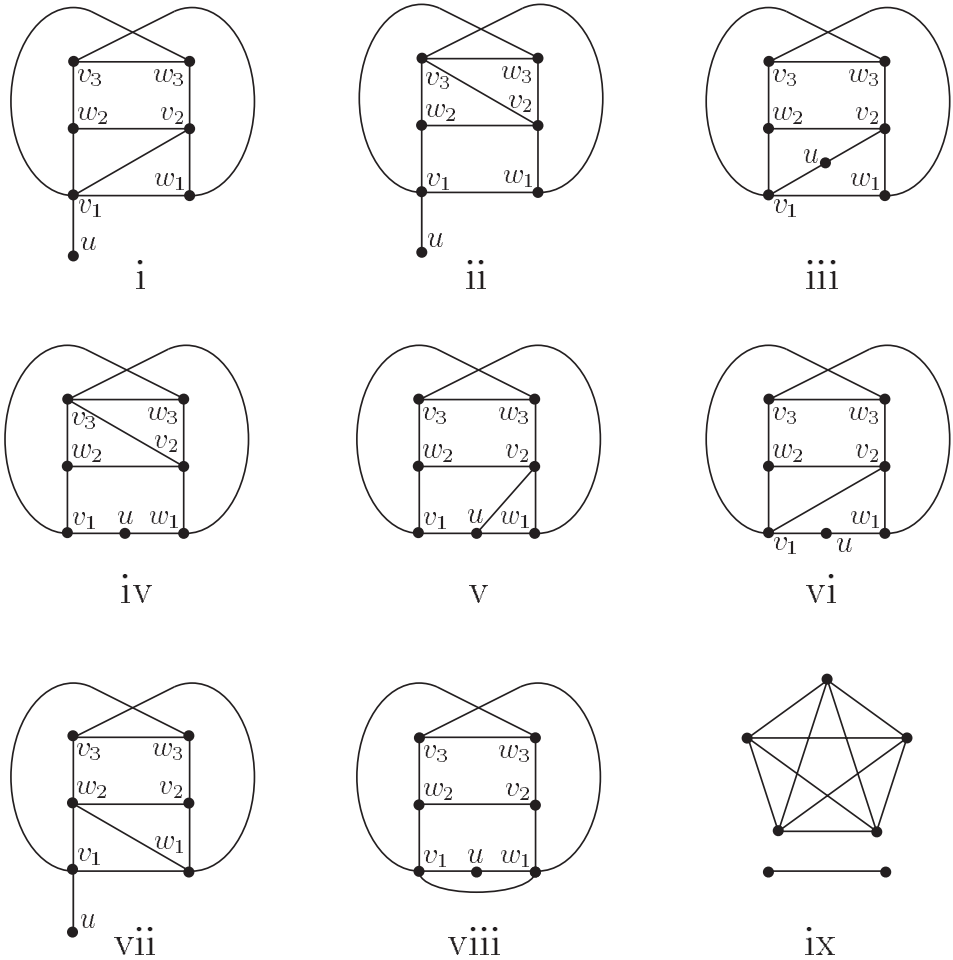}
\caption{The nine non--planar graphs with seven vertices and eleven edges.}\label{figG711}
\end{center}
\end{figure}

If $G$ has seven vertices and ten edges, it is one of the two graphs illustrated in Figure~\ref{figG710} obtained from
$K_{3,3}$ by splitting a vertex.
As for $|G| = 7$ and $\|G \|  = 11$, there are nine such graphs obtained by 
splitting a vertex of a non--planar graph on six vertices or else by adding 
an edge to a graph on ten edges, see Figure~\ref{figG711}.

\begin{figure}[ht]
\begin{center}
\includegraphics[scale=0.6]{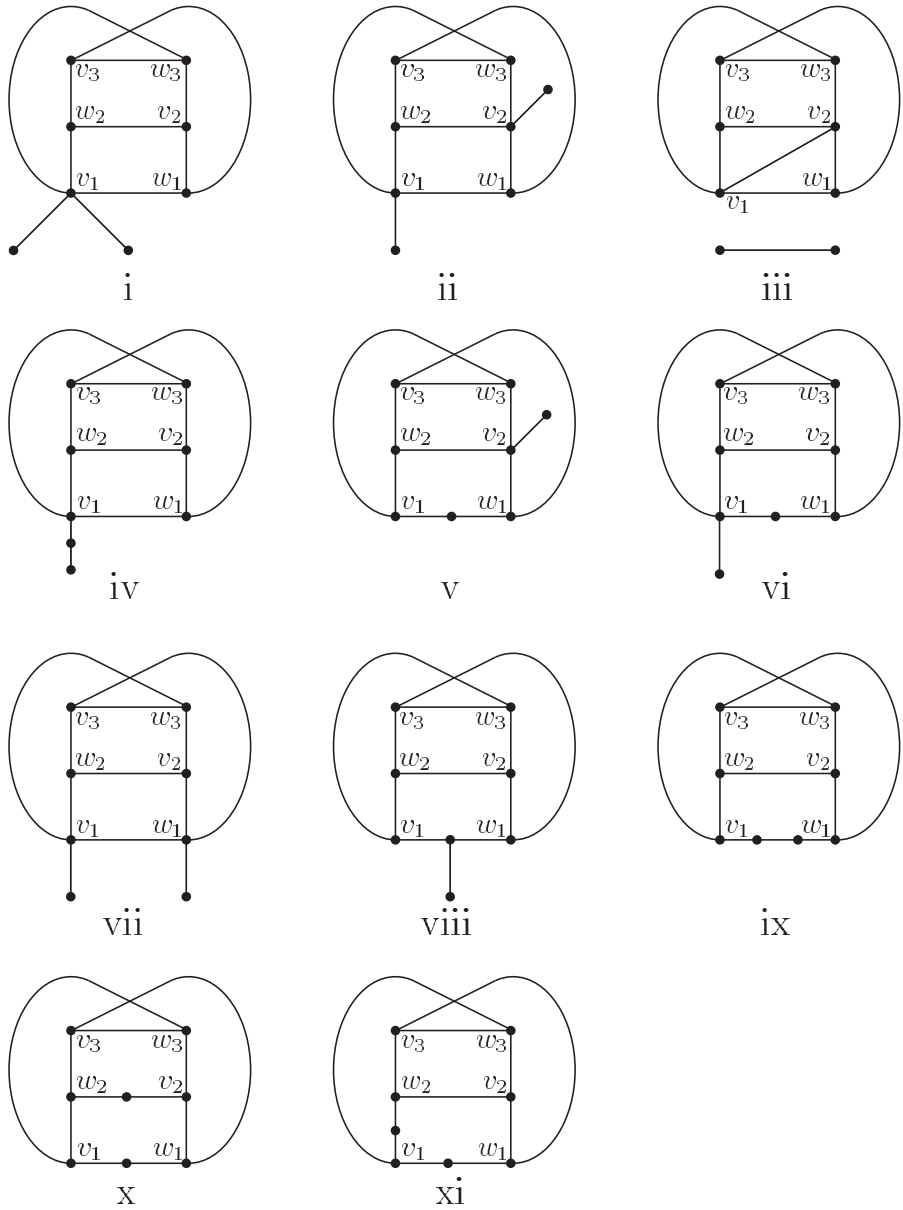}
\caption{Non--planar graphs with eight vertices and eleven edges.}\label{figG811}
\end{center}
\end{figure}

The disjoint union $K_{3,3} \sqcup K_2$ is the only graph $G$ with eight vertices and ten edges. The 11 graphs $G$ with $|G| = 8$ and $\|G\| = 11$ are illustrated in Figure~\ref{figG811}.

Two of the three graphs with $|G| = 9$ and $\| G \| =11$ are formed by the union of $K_2$ with the two graphs having seven vertices and ten edges. The third is the union of $K_{3,3}$ and the tree of two edges.

The unique graph with $|G|=10$ and $\| G \| = 11$ is $K_{3,3} \sqcup K_2 \sqcup K_2$. Of the 15 graphs with $|G| = 10$ and $\| G \| = 12$, 11 are formed by the union of $K_2$ with one of the non--planar graphs on eight vertices and eleven edges, 
two are the union of the tree of 
two edges with a non--planar graph on seven vertices and ten edges, 
and the remaining two are formed by the union of $K_{3,3}$ with the
two trees of three edges.

The graphs with $|G| = 11$ and $\| G \| = 12$ are formed by the union of $K_2$ with a non--planar graph on nine vertices and 11 edges. 
If $G$ has $11$ vertices and $13$ edges, then, it is either $K_5 \sqcup K_2 \sqcup K_2 \sqcup K_2$, or else it
has exactly one tree component, the rest of the graph having a $K_{3,3}$ minor. 
\end{rmk}

Almost all of the graphs mentioned in the remark have $K_{3,3}$ minors. The following definition seeks to take advantage of this.

\begin{defn} Let $G$ be a graph with vertex $v$ and let $v_1,v_2,v_3$ and 
$w_1,w_2,w_3$ denote the vertices in the two parts of $K_{3,3}$. The pair $(G;v)$ is a {\bf generalised ${\mathbf K_{3,3}}$} if the induced subgraph $G-v$ is topologically equivalent to $K_{3,3} - v_1$.
It follows that the vertices of $G-v$ can be partitioned into five disjoint sets $V_2$, $V_3$, $W_1$, $W_2$, and $W_3$, where each of these
five sets induces a tree as a subgraph of $G-v$, such that when each of these trees is contracted down to a single vertex, the tree on $V_i$ becomes the vertex $v_i$ in $K_{3,3} - v_1$ and similarly for the $W_i$. 
When there is a choice of partitions, a partition of a generalised $K_{3,3}$ will be one for which $V_2$ and $V_3$ are minimal. 
\end{defn}

We next observe that when $\Gab$ is a generalised $K_{3,3}$ this will have implications for $N(a)$ and $N(b)$, under the assumption that
$G$ is not $2$--apex.

\begin{lemma} \label{lemgenK33}%
Suppose that $G$ is not $2$--apex and that $(\Gab;c)$ is a generalised $K_{3,3}$.
Then $N(a)$ and $N(b)$ each include at least one vertex from each of
$W_1$, $W_2$, and $W_3$.
\end{lemma}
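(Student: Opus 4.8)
The plan is to argue by contradiction and to exhibit a pair of vertices of $G$ whose deletion leaves a planar graph. Since both hypotheses -- $G$ is not $2$--apex, and $(\Gab;c)$ is a generalised $K_{3,3}$ -- are symmetric in $a$ and $b$, it is enough to prove the claim for $N(a)$. So suppose $N(a)$ misses every vertex of one of the three trees $W_1,W_2,W_3$, and relabel so that $N(a)\cap W_1=\emptyset$. I will show that $G-b,c$ is then planar; since $G$ is assumed not to be $2$--apex, this is the desired contradiction.

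Because $(\Gab;c)$ is a generalised $K_{3,3}$, the vertex set of $\Gab-c=G-a,b,c$ partitions into trees $V_2,V_3,W_1,W_2,W_3$ that collapse, under contraction of each tree to a point, to $K_{3,3}-v_1=K_{2,3}$; in particular $\Gab-c$ is planar. In $K_{2,3}$ there is a unique face bounded by the $4$--cycle $v_2w_2v_3w_3$, and that face -- call it $F$ -- contains neither $w_1$ nor the two edges at $w_1$. I would promote a planar embedding of $K_{2,3}$ to a planar embedding of $\Gab-c$ possessing a face, still written $F$, on whose boundary \emph{every} vertex of $V_2\cup V_3\cup W_2\cup W_3$ lies. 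For $W_2$ and $W_3$ this requires only the outerplanarity of trees: draw the path joining the two ports of $W_j$ along the $F$--side of its region and hang all remaining subtrees toward $F$. For $V_2$ and $V_3$ one must also invoke the minimality clause in the definition of a generalised $K_{3,3}$. The only part of the tree $V_2$ that need not border $F$ is a branch running toward the port incident with $W_1$; but any such branch, port included, can be transferred wholesale into $W_1$ -- the five parts remain trees contracting to the correct vertices of $K_{2,3}$ -- so it is absent from the minimal partition, and $V_2$, and likewise $V_3$, can be drawn entirely along $\partial F$. Thus $W_1$ lies strictly inside one of the other two faces and $V_2\cup V_3\cup W_2\cup W_3\subseteq\partial F$.

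Finally, $G-b,c$ is obtained from $\Gab-c$ by restoring $a$ together with its edges to vertices other than $b$ and $c$; by the contradiction hypothesis these edges avoid $W_1$, so every neighbour of $a$ in $G-b,c$ already lies on $\partial F$. Inserting $a$ into $F$ and joining it to its neighbours along $\partial F$ introduces no crossing, so $G-b,c$ is planar, $G$ is $2$--apex, and the proof is complete once the symmetric argument (exchanging $a$ and $b$) is applied to $N(b)$. The step I expect to be the main obstacle is the construction in the second paragraph: a priori a neighbour of $a$ could be a ``deep'' vertex of one of $V_2,V_3,W_2,W_3$ rather than a boundary vertex of $F$, and it is exactly the minimality of $V_2$ and $V_3$ (which excludes $W_1$--branches) together with the outerplanarity of the trees $W_2,W_3$ that allows all of $V_2\cup V_3\cup W_2\cup W_3$ to be brought onto one face.
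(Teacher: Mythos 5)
Your argument is correct and follows essentially the same route as the paper's proof: assume $N(a)$ misses $W_1$ and contradict the non--$2$--apex hypothesis by exhibiting a planar embedding of $G-b,c$ built from the planar picture of $(K_{3,3}-v_1)+a$ with $a$ in the face avoiding $w_1$. The paper simply contracts the five trees and asserts that ``reversing the contractions'' gives the planar embedding, whereas your second paragraph supplies the justification for that step (hanging the trees toward the face $F$ and using the minimality clause to rule out a $V_2$-- or $V_3$--branch hidden toward $W_1$), which is exactly the point the paper leaves implicit; this is a legitimate filling-in, not a different method.
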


\begin{proof}
Let $V_2$, $V_3$, $W_1$, $W_2$, and $W_3$ be the partition of the 
vertices of $G-a,b,c$ as in the definition of a generalised $K_{3,3}$. 

Suppose $a$ has no neighbour in $W_1$. Note that by contracting
the subgraphs of $G-b,c$ induced by $V_2$, $V_3$, $W_1$, $W_2$, and $W_3$,
we obtain a (multi)graph $(K_{3,3}-v_1) + a$ formed by adding a vertex $a$ to $K_{3,3} - v_1$. As $a$ is not adjacent to $w_1$, it follows that
$(K_{3,3}-v_1)+a$ has a planar embedding. Now, reversing the contractions
performed earlier, this results in a planar embedding of $G-b,c$, a contradiction. 

Therefore, $a$ has a neighbour in $W_1$. Similarly, $b$ also has a neighbour in $W_1$, and both $a$ and $b$ have neighbours in $W_2$ and $W_3$.
\end{proof}

\begin{rmk} \label{rmkgenK33}%
Lemma~\ref{lemgenK33} also applies (with obvious modifications) when $\Gab$ has a generalised $K_{3,3}$ component with the remaining components being trees. 
\end{rmk}

\subsection{Eight or fewer vertices}

We are now in a position to prove Theorem~\ref{thmain}.
We begin with graphs of eight or fewer vertices. 

\begin{rmk}
In what follows, we will often make use of the following strategy.
To argue that a graph $G$ is $2$--apex, proceed by contradiction. Assume
$G$ is not $2$--apex. This means that every subgraph of the form $\Gab$ is non--planar. Using this assumption we eventually deduce that a particular
$\Gab$ is planar. Although
we won't always say it explicitly, in demonstrating a planar $\Gab$, we have in fact derived a contradiction that shows that $G$ is $2$--apex.
\end{rmk}

\begin{prop} \label{prop820}%
A graph $G$ with $|G| \leq 8$ and $\|G\| \leq 20$ is $2$--apex.
\end{prop}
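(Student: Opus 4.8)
The plan is to argue by contradiction using the strategy outlined in the preceding remark: assume $G$ is not $2$--apex, so every induced subgraph $\Gab$ is non--planar. By Lemma~\ref{lemd12} we may assume $\mind{G} \geq 3$, and since $|G| \leq 8$ we may in fact reduce to the case $|G| = 8$ (smaller cases follow by adding isolated vertices, or are handled directly since a graph on seven or fewer vertices with minimum degree $\geq 3$ already has few enough edges to analyze). With $\mind{G} \geq 3$ and eight vertices, the degree sum is at least $24$, so $\|G\| \geq 12$; combined with $\|G\| \leq 20$ this pins the edge count to the range $12 \leq \|G\| \leq 20$. Fix two vertices $a,b$; then $G' = \Gab$ has six vertices, $\mind{G'} \geq 1$, and is non--planar. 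By Lemma~\ref{lemnonpl} (with $n=6$), $\|G'\| \geq 9$, and the characterization in Remark~\ref{rmknonpl} tells us $G'$ is either $K_{3,3}$, or $K_{3,3}$ plus one edge, or $K_{3,3} \sqcup K_2$, or $K_{3,3}$ with more edges, etc. — in particular $G'$ always contains a $K_{3,3}$ minor, so $(\Gab;c)$ is a generalised $K_{3,3}$ for a suitable choice of third vertex $c$ (or $\Gab$ has a generalised $K_{3,3}$ component plus a tree, invoking Remark~\ref{rmkgenK33}).

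Next I would push the edge-counting. Removing $a$ and $b$ from $G$ deletes $d(a) + d(b) - |\{ab\} \cap E(G)|$ edges, and this must be at least $\|G\| - \|G'\|$. Since $\|G'\| \geq 9$ and $\|G\| \leq 20$, we get $d(a) + d(b) \leq 11 + [ab \in E]$; averaged over all pairs this forces most vertices to have small degree, so the degree sequence is tightly constrained (entries in $\{3,4,5,\dots\}$ summing to $2\|G\| \leq 40$ over eight vertices). For each admissible degree sequence and each candidate $G'$ from the Remark, I would apply Lemma~\ref{lemgenK33}: $N(a)$ and $N(b)$ must each meet all three of $W_1, W_2, W_3$. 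Since the $W_i$ together with $V_2, V_3$ partition the six vertices of $G'$ into five blocks (three of which, the $W_i$, are often singletons when $\|G'\|$ is near $9$), the requirement that both $a$ and $b$ have a neighbour in each $W_i$ consumes a definite number of the edges incident to $a$ and $b$ and restricts which vertices of $G'$ they attach to. I would then show that, under all these constraints, one can always exhibit an explicit pair $a',b'$ for which $G - a', b'$ is planar — typically by choosing $a'$ to be a vertex that, together with one of $a,b$, separates off a planar piece, or by directly drawing the embedding after the forced adjacencies are nailed down.

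The bookkeeping is the main obstacle: there are several candidate shapes for $G'$ (the Remark lists $K_{3,3}$, $K_{3,3}+e$, and $K_{3,3} \sqcup K_2$ for six/eight vertices, but here $G'$ has exactly six vertices so it is $K_{3,3}$, $K_{3,3}+e$, $K_{3,3}+2e$, \dots up to $\|G'\| = \|G\| - (d(a)+d(b)-[ab\in E])$), and for each one there can be several partitions into the blocks $V_2, V_3, W_1, W_2, W_3$, with the minimality convention on $V_2, V_3$ reducing but not eliminating the case split. I expect that a clean way to organize this is to first bound $\|G'\|$ from above as well — since $\|G\| \leq 20$ and $G$ has minimum degree $3$, removing two vertices cannot leave too many edges — so that $G'$ is $K_{3,3}$ with only a handful of extra edges, keeping the number of cases manageable. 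The genuinely delicate point will be the borderline degree sequences (e.g.\ when several vertices have degree $3$ and the graph is close to a disjoint union of a $K_{3,3}$-like piece with small pieces), where the generalised $K_{3,3}$ block structure is most rigid; there I would lean hardest on Lemma~\ref{lemgenK33} and Remark~\ref{rmkgenK33} to force the contradiction. Throughout, each subcase ends the same way — producing a planar $\Gab$, which by the remark's convention is exactly the contradiction showing $G$ is $2$--apex.
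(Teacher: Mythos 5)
Your plan follows the same broad strategy as the paper (contradiction, Lemma~\ref{lemnonpl}, Remark~\ref{rmknonpl}, Lemma~\ref{lemgenK33}), but it has a genuine gap: you never establish the bound that makes the case analysis finite and small, namely that $a$ and $b$ can be \emph{chosen} so that $\| \Gab \| \leq 10$. The paper gets this by pigeonhole: with $|G|=8$, $\|G\|=20$ the degree sum is $40$, so $5 \leq \maxd{G} \leq 7$, and taking $a,b$ of largest degrees (non--adjacent if all degrees equal $5$) removes at least ten edges. Your substitute reasoning runs the inequality in the unhelpful direction: the constraint $d(a)+d(b) \leq 11 + [ab\in E]$ for \emph{all} pairs is a consequence of assuming every $\Gab$ is non--planar, not a tool for bounding $\|\Gab\|$ above, and your suggestion that ``minimum degree $3$'' keeps $\|G'\|$ small is wrong --- it only gives $\|G'\| \leq 15$, leaving dozens of candidate six--vertex graphs rather than the two ($K_{3,3}$ and $K_{3,3}$ plus an edge) that the paper handles. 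Without the $\|\Gab\| \leq 10$ bound your blanket claim that $G'$ always has a $K_{3,3}$ minor also fails: a six--vertex non--planar graph with $\mind{G'} \geq 1$ could be $K_5$ with a pendant edge (eleven edges), which has no $K_{3,3}$ minor, so the generalised $K_{3,3}$ machinery would not even apply in that branch.

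The second, related gap is that the decisive part of the argument is only promised, never carried out. In the paper, once $\Gab$ is $K_{3,3}$ or $K_{3,3}$ plus an edge, a short split on $d(a)\in\{5,6,7\}$ combined with $|N(a)\cap N(b)\cap V(\Gab)|$ and Lemma~\ref{lemgenK33} produces an explicit planar deletion (e.g.\ $G-a,v_1$ or $G-a,w_1$), which is the contradiction. Your proposal defers exactly this step (``I would then show \dots one can always exhibit an explicit pair''), and acknowledges the bookkeeping as the main obstacle; as written it is a proof outline, not a proof. (Your reduction of the $|G|\leq 7$ cases is fine in spirit, though the paper's observation that $G$ is then a proper subgraph of $K_7$, so some $\Gab$ is a proper subgraph of $K_5$, is more direct than adding isolated vertices.) To repair the argument, insert the degree--sum/pigeonhole choice of $a,b$ with $\|\Gab\|\leq 10$ first, and then actually perform the two--case analysis.
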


\begin{proof}
We can assume $|G| \geq 5$ as otherwise $G$ is planar and {\em a fortiori} $2$--apex.  If $|G| \leq 7$, then $G$ is a proper subgraph of $K_7$. So, with an appropriate choice of vertices $a$ and $b$, $\Gab$ is a proper subgraph
of $K_5$ and therefore planar. Thus, $G$ is $2$--apex.

So, we may assume $|G| = 8$ and we will also take $\|G\| = 20$. We will investigate induced subgraphs $\Gab$ formed by deleting two vertices $a$ and $b$. Notice that $a$ and $b$ may be chosen so that $\| \Gab \| \leq 10$. Indeed, the maximum degree of $G$ is at most seven, while the pigeonhole
principle implies the maximum degree is at least five:
$5 \leq \maxd{G} \leq 7$. By Lemma~\ref{lemd12}, the minimum degree is at least three: $\mind{G} \geq 3$. Since $\| G \| = 20$, the sum of the 
vertex degrees is $40$ and it follows that there are vertices $a$ and
$b$ such that $\Gab$ has at most ten edges. 

Assume $G$ is not $2$--apex. Then for each pair of vertices $a$ and $b$, $\Gab$ is not planar. By Lemma~\ref{lemnonpl} such a non--planar $\Gab$ has at least nine edges. Thus, it will suffice to consider the cases where
$G$ has a non--planar subgraph $\Gab$ of nine or ten vertices.
We may assume $d(a) \geq d(b)$.

Suppose first that $\Gab$ is non--planar and has nine edges. By Remark~\ref{rmknonpl}, $\Gab = K_{3,3}$. Let $v_1, v_2, v_3$ be the vertices in
one part of $K_{3,3}$ and $w_1, w_2, w_3$ those in the other.
Since $\|G\| = 20$, $\| \Gab \| = 9$, and $d(a) \geq d(b)$, then $d(a)$ is seven or six. In either case, $\| N(a) \cap N(b) \cap V(\Gab) \| \geq 3$, so
we can assume $v_1$ and $v_2$, say, are in the intersection.
If $d(a) = 7$, it follows that $G-a,v_1$ is planar and $G$ is $2$--apex. If $d(a) = 6$, by Lemma~\ref{lemgenK33}, 
$\{w_1, w_2, w_3 \} \subset N(b)$. But then,
since $\| N(a) \cap V(\Gab) \| \geq 5$, we can assume $aw_1 \in E(G)$ 
(i.e., $aw_1$ is an edge of $G$) and it follows that $G - a, w_1$ is planar whence $G$ is $2$--apex.

Next suppose $\Gab$ is non--planar and has ten edges. That is, by Remark~\ref{rmknonpl}, $\Gab$ is $K_{3,3}$ with
an extra edge. Again, $v_i$ and $w_i$, ($i = 1,2,3$) will denote 
the vertices in the two parts of $K_{3,3}$ and let $v_1v_2$ be the additional edge. 

Suppose first that $d(a) = 5$. This implies $d(b) = 5$, $ab \not\in E(G)$, and there are four or five elements in
$N(a) \cap N(b)$. If five, then $G$ has $K_{3,3}$ as an induced subgraph
after deleting two vertices, a case we considered earlier. 
So, we can assume 
there are four vertices in the intersection, including at least one 
of the vertices $v_1, v_2, v_3$, call it $v$ and at least one $w_i$ vertex, say $w_1$. Then, $G-v,w_1$ is planar and $G$ is $2$--apex.

So, we can assume $d(a) > 5$.
By Lemma~\ref{lemgenK33}, $\{w_1, w_2, w_3 \} \subset N(b)$.
In that case, without loss of generality, $aw_1 \in E(G)$. Then $G - a, w_1$ is planar and $G$ is $2$--apex. This completes
the argument when $\Gab$ has ten edges.

We have shown that when $|G| = 8$ and $\|G \| = 20$, $G$ is $2$--apex. It follows
that graphs having $|G| = 8$ and $\| G \| \leq 20$ are also $2$--apex.
\end{proof}

\subsection{Nine vertices}

In this subsection we prove Theorem~\ref{thmain} in the case of graphs
of nine vertices. We begin with a lemma.

\begin{lemma} \label{lem91}%
Let $G$ be a graph with $|G|= 9$, $\| G \| = 20$, $\mind{G} \geq 3$, 
$\maxd{G} = 5$, and such that all degree five vertices are mutually adjacent. Then $G$ is $2$--apex.
\end{lemma}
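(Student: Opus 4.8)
The plan is to argue by contradiction, following the strategy laid out in the Remark: assume $G$ is not $2$--apex, so that every $\Gab$ is non--planar, and derive a contradiction by exhibiting a planar $\Gab$. The degree sequence is highly constrained: with $|G| = 9$, $\|G\| = 20$, $\mind{G} \geq 3$ and $\maxd{G} = 5$, the sum of degrees is $40$, so the degree sequence is (up to reordering) one of a short list — roughly, the number of degree-five vertices $k$ satisfies $5k + 3(9-k) \leq 40 \leq 5k + 4(9-k)$, forcing $k \in \{2,3,4\}$ after accounting for how the remaining degrees ($3$'s and $4$'s) must fill out the total. The first step is to enumerate these degree sequences explicitly and, using the hypothesis that the degree-five vertices are mutually adjacent, understand the local structure around them.

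Next I would pick a good pair $(a,b)$ to delete. By Lemma~\ref{lemnonpl}, a non--planar graph on $7$ vertices with $\mind{} \geq 1$ has at least $7 + 3 - \lfloor 1/2 \rfloor = 10$ edges, so every $\Gab$ we look at must have at least $10$ edges; since $\|G\| = 20$, this means $d(a) + d(b) - |E(\{a,b\})| \leq 10$, i.e.\ we want $a,b$ of small combined degree (and ideally non-adjacent). Choosing $a,b$ among the lower-degree vertices — or, better, choosing one degree-five vertex $a$ and a suitable low-degree non-neighbour $b$ — should force $\|\Gab\| \in \{10, 11, 12\}$. Then I invoke Remark~\ref{rmknonpl}, which lists all non--planar graphs on $7$ vertices with $10$, $11$, and (implicitly, via the structural description) more edges: essentially each is built from $K_{3,3}$ or $K_5$ by vertex splits, edge additions, and tree components. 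In each case $(\Gab; c)$ is a generalised $K_{3,3}$ for an appropriate choice of $c$ (the graphs that aren't are the few $K_5$-based ones, which I'd handle separately — but a graph on $7$ vertices and $\leq 12$ edges containing $K_5$ has a lot of freedom, and one checks directly that deleting two of the $K_5$ vertices plus using the mutual-adjacency hypothesis leaves a planar remnant).

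Once $(\Gab; c)$ is a generalised $K_{3,3}$, Lemma~\ref{lemgenK33} (and Remark~\ref{rmkgenK33} if there are tree components) tells us that both $N(a)$ and $N(b)$ must hit all three of $W_1, W_2, W_3$. Combined with the degree bounds — $d(a), d(b) \leq 5$ and at least $\mind{} \geq 3$ — this is a strong counting constraint: $a$ and $b$ together have at most $10$ edges into a $7$-vertex set, they must collectively cover $W_1 \cup W_2 \cup W_3$ twice over, and the degree-five vertices (which are mutually adjacent, so they cluster) eat up further edges. I expect this to be the main obstacle: carefully bookkeeping, for each admissible degree sequence and each shape of $\Gab$ from Remark~\ref{rmknonpl}, that the forced neighbours of $a$ and $b$ leave no room to also realise $\|G\| = 20$ without creating a pair of vertices whose deletion yields a planar graph (typically $G - a, w$ for some $w \in W_i$ adjacent to $a$, as in the proof of Proposition~\ref{prop820}). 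The mutual-adjacency hypothesis on degree-five vertices is exactly what pins down where the "extra" edges beyond the $K_{3,3}$-skeleton can go, so the casework, while tedious, should close. A clean way to organise it is: (i) reduce to $\|\Gab\|$ small via degree counting; (ii) apply Remark~\ref{rmknonpl} to name $\Gab$; (iii) apply Lemma~\ref{lemgenK33} to constrain $N(a), N(b)$; (iv) contradict the edge count or exhibit an explicit planar $G - a, w$.
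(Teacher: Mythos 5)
Your outline is a different route from the paper's, but as it stands it has real gaps. First, the arithmetic: your own inequalities $5k+3(9-k)\le 40\le 5k+4(9-k)$ give $k\ge 4$ and $k\le 6$, so the number of degree-five vertices is $4$, $5$, or $6$ (degree sequences $\{5,5,5,5,4,4,4,4,4\}$ and $\{5,5,5,5,5,4,4,4,3\}$, the case $k=6$ dying immediately because a mutually adjacent set of six degree-five vertices is a $K_6$ component, leaving at most $18$ edges), not $k\in\{2,3,4\}$. Second, the range $\|\Gab\|\in\{10,11,12\}$ is off at both ends: $\|\Gab\|=10$ would require two non-adjacent degree-five vertices, which is exactly what the hypothesis forbids (this is the very reason Lemma~\ref{lem91} is split off from the proof of Proposition~\ref{prop920}); and if you land on $\|\Gab\|=12$, Remark~\ref{rmknonpl} gives you nothing, since it does not enumerate non-planar graphs on seven vertices with twelve edges. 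So your step (ii) only has a basis if you commit to a pair with $\|\Gab\|=11$ (e.g.\ two adjacent degree-five vertices, which exist since $k\ge 4$), a point the proposal never pins down.

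The more serious issue is that the substance of the lemma is precisely the case analysis you defer with ``the casework, while tedious, should close.'' After Lemma~\ref{lemgenK33} forces $N(a)$ and $N(b)$ to meet each $W_i$, you must still, for each of the nine graphs of Figure~\ref{figG711} (including the $K_5\sqcup K_2$-type case you only gesture at), either reach a degree contradiction or exhibit an explicit planar $G-a',b'$; none of that is carried out, and it is not automatic -- the choice of deletion pair genuinely matters here. The paper avoids this machinery entirely: the mutual-adjacency hypothesis makes the degree-five vertices a clique $K_k$, and an edge count then determines the induced subgraph on the remaining vertices ($K_4-e$ when $k=5$, one of the six $6$-edge graphs on five vertices of Figure~\ref{fig9f1} when $k=4$), after which suitable planar deletions $\Gab$ are exhibited directly (and even there, two of the six subcases need extra care). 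Your strategy could probably be made to work along the lines of the paper's nine-vertex, $21$-edge analysis, but without the corrected degree sequences, the restriction to an $11$-edge $\Gab$, and the actual graph-by-graph verification, it is a plan rather than a proof.
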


\begin{proof}
The degree bounds imply that $G$ has four, five, or six degree five vertices. If $G$ has six degree five vertices, then, as they are 
mutually adjacent, $G$ has a $K_6$ component. This implies the other
component, on three vertices, has at most three edges and the graph has
at most 18 edges in total, which is a contradiction. So, in fact, 
$G$ cannot have six degree five vertices.

If $G$ has five degree five vertices, then the induced subgraph
on the other four vertices has five edges, so it is $K_4 - e$ 
($K_4$ with a single edge deleted). Let $c$ be a degree four vertex that
has degree two in the induced subgraph $K_4 - e$ and let 
$a$ and $b$ be the two degree five neighbours of $c$. Then
$\Gab$ is planar and $G$ is $2$--apex.

\begin{figure}[ht]
\begin{center}
\includegraphics[scale=0.2]{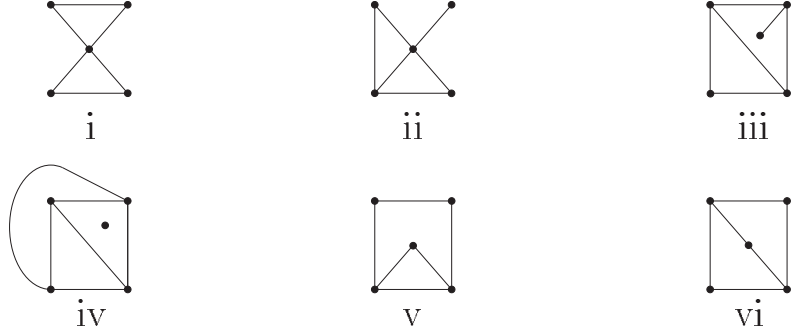}
\caption{The six graphs of six edges on five vertices.}\label{fig9f1}
\end{center}
\end{figure}

If $G$ has four degree five vertices, then the induced subgraph
on the other five vertices has six edges, so it is one of the six
graphs in Figure~\ref{fig9f1}. For graphs i, ii, and iii, the argument is similar to the previous case. That is, 
let $c$ be a degree four vertex of $G$ that
has degree two in the induced subgraph and let $a$ and $b$ be the degree
five neighbours of $c$. Then $\Gab$ is planar and $G$ is $2$--apex. For graph iv,
if $a$ and $b$ are any of the degree five vertices, $\Gab$ is planar and $G$ is $2$--apex. For graphs v and vi, the argument is a little more involved, but, 
again, there are vertices $a$ and $b$ such that $\Gab$ is planar and
$G$ is $2$--apex.
\end{proof}

We are now ready to prove Theorem~\ref{thmain} in the case of nine vertices.

\begin{prop} \label{prop920}%
A graph $G$ with $|G| = 9$ and $\|G\| \leq 20$ is $2$--apex.
\end{prop}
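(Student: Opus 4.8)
The plan is to mimic the structure of Proposition~\ref{prop820}: assume $G$ is not $2$--apex (so every $\Gab$ is non-planar), and then, using the earlier lemmas together with careful degree bookkeeping, exhibit an explicit pair of vertices $a,b$ with $\Gab$ planar, a contradiction. By Lemma~\ref{lemd12} we may assume $\mind{G} \geq 3$, and we may take $\|G\| = 20$, so the degree sum is $40$. The pigeonhole principle gives $5 \leq \maxd{G} \leq 8$, and for each choice of a high-degree vertex $a$ and a second vertex $b$ we can control $\|\Gab\|$: choosing $a$ of maximum degree and $b$ of near-maximum degree among the remaining vertices, $\Gab$ has roughly $20 - d(a) - d(b) + |N(a)\cap N(b)\cap V(\Gab)|$ edges (adjusting by $1$ if $ab\in E(G)$). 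Since $\mind{G-a}\geq 2$ after deleting a single vertex is not guaranteed, we instead lean on Lemma~\ref{lemnonpl}: with $|\Gab| = 7$ and $\mind{\Gab}\geq 1$, a non-planar $\Gab$ has at least $7 + 3 - 0 = 10$ edges. So the whole argument reduces to the cases $\|\Gab\| \in \{10, 11\}$ (and one should check no valid $a,b$ forces fewer, nor that we are obliged to consider $\|\Gab\|\geq 12$, which would only happen if $G$ has very concentrated degrees — there one picks a different pair).

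First I would dispose of the case $\maxd{G} \leq 5$. Here Lemma~\ref{lem91} already handles the situation where all degree-five vertices are mutually adjacent, so I would argue that if some two degree-five vertices $a,b$ are \emph{non}-adjacent, then $\Gab$ has at most $20 - 5 - 5 + |N(a)\cap N(b)| \leq 20 - 10 + |N(a)\cap N(b)|$ edges on seven vertices; since $|N(a)\cap N(b)| \leq 7$ and in fact the overlap is constrained, one pushes $\|\Gab\|$ down to at most $10$ or $11$ and then invokes the structural classification in Remark~\ref{rmknonpl} (the non-planar seven-vertex graphs on $10$ or $11$ edges, Figures~\ref{figG710} and~\ref{figG711}), combined with Lemma~\ref{lemgenK33}, to locate a vertex whose deletion together with $a$ or $b$ yields planarity. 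For $\maxd{G} \geq 6$, let $a$ be a vertex of maximum degree; then $\|G - a\| \leq 20 - 6 = 14$ on eight vertices, and deleting a suitably chosen second vertex $b$ (one of high degree in $G-a$, or a common neighbour of $a$) brings $\|\Gab\|$ into the range $\{10, 11\}$, again reducing to the Remark~\ref{rmknonpl} catalogue.

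The core of the proof is then a finite case analysis. For each non-planar seven-vertex graph $H$ on $10$ or $11$ edges appearing as a candidate $\Gab$, I would check: either $H$ (or its non-tree component, using Remark~\ref{rmkgenK33}) contains $K_{3,3}$ as a topological minor, so that $(\Gab; c)$ is a generalised $K_{3,3}$ for an appropriate $c$ and Lemma~\ref{lemgenK33} forces $N(a)$ and $N(b)$ to each meet all three of $W_1, W_2, W_3$ — which, given the edge budget $d(a) + d(b) \geq 11$ or $12$, over-determines the neighbourhoods and lets me find $w_i \in N(a)$ with $G - a, w_i$ planar; or $H$ has a $K_5$ minor, in which case $\chi(H) \leq -5$ forces even more edges and a contradiction with the edge count is immediate or nearly so. The generalised-$K_{3,3}$ bookkeeping is the step I expect to be the main obstacle: one must track, across the various $H$ and the various partitions $V_2, V_3, W_1, W_2, W_3$, exactly how many edges $a$ and $b$ are compelled to send into $G - a,b,c$ versus how many are "free," and verify in every branch that the free edges, plus the forced ones, still leave room to pick a deletable vertex. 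As in Lemma~\ref{lem91}, a couple of the configurations (those where the $W_i$ are singletons and $c$ has few neighbours) will require a more hands-on planarity check of the specific $\Gab$ rather than a clean appeal to Lemma~\ref{lemgenK33}, but in each such case one can still exhibit the planar embedding directly. Collecting all cases gives the contradiction, so $G$ is $2$--apex.
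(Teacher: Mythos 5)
Your overall strategy is the paper's (contradiction, $\mind{G}\geq 3$ via Lemma~\ref{lemd12}, find $a,b$ with $\|\Gab\|$ small, invoke Lemma~\ref{lemnonpl} and the Remark~\ref{rmknonpl} catalogue, finish with Lemma~\ref{lemgenK33}, with Lemma~\ref{lem91} covering the mutually adjacent degree-five case), but there are two concrete problems. First, your edge count is wrong: for any pair of vertices, $\|\Gab\| = \|G\| - d(a) - d(b)$ plus $1$ exactly when $ab\in E(G)$; the term $|N(a)\cap N(b)\cap V(\Gab)|$ does not enter at all. Used correctly, this count gives a sharper reduction than the one you set up: once Lemma~\ref{lem91} disposes of the case $\maxd{G}=5$ with all degree-five vertices mutually adjacent, one can always choose $a,b$ with $\|\Gab\|\leq 10$ (if $\maxd{G}\geq 6$, averaging over the remaining eight vertices supplies a suitable $b$; if $\maxd{G}=5$, two non-adjacent degree-five vertices give exactly ten). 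Since Lemma~\ref{lemnonpl} forces a non-planar $\Gab$ on seven vertices to have at least ten edges, the only candidates are the two graphs of Figure~\ref{figG710}; the nine eleven-edge graphs of Figure~\ref{figG711} never need to be considered. Your looser bookkeeping drags you into exactly those extra cases, and they are not harmless: they are the configurations that, at 21 edges, produce the non-$2$--apex graphs $E_9$, $F_9$, $H_9$ of Section~3, so ``one can still exhibit the planar embedding directly'' is not something that can be waved at -- it would require genuine (if finite) verification at 20 edges.

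Second, and more seriously, the decisive step is left as a plan rather than carried out: the finite case analysis over the candidate graphs $\Gab$, together with the neighbourhood bookkeeping from Lemma~\ref{lemgenK33}, is precisely the content of the proposition, and your proposal describes it (``I would check\dots'', ``the main obstacle'', ``in each such case one can still exhibit the planar embedding'') without doing it. For comparison, the paper's analysis is short because of the reduction above: for the left graph of Figure~\ref{figG710}, both $a$ and $b$ must be adjacent to $u$ (degree of $u$ in $G$ is at least three), Lemma~\ref{lemgenK33} puts $w_1,w_2,w_3$ in $N(b)$, and then $G-a,w_1$ is planar; for the right graph, Lemma~\ref{lemgenK33} forces $w_2,w_3\in N(a)\cap N(b)$, so $G-w_2,w_3$ is again a non-planar seven-vertex, ten-edge graph in which $u$ or $w_1$ has degree at least four, i.e.\ it is the left graph, reducing to the case already settled. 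Until you either tighten your reduction to the ten-edge list and run this analysis, or actually verify all the eleven-edge branches you admit, the proposal does not establish the proposition.
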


\begin{proof} First, we'll assume $\|G \| = 20$. Then, $5 \leq \maxd{G} \leq 8$ and, by Lemma~\ref{lemd12},
$\mind{G} \geq 3$. If $\maxd{G} > 5$, by appropriate choice of vertices $a$ and $b$, $\Gab$ has at most ten edges.
This is also true when $\maxd{G} = 5$ unless all degree 5 vertices
are mutually adjacent. As Lemma~\ref{lem91} treats that case, we may
assume that there is a $\Gab$ of at most ten edges. Moreover, we'll take
$d(a) \geq d(b)$.

Assuming $G$ is not $2$--apex, then that $\Gab$ is non--planar. By Remark~\ref{rmknonpl}, $\Gab$ is one of the two graphs in Figure~\ref{figG710}. 
Suppose first that it is the graph at left in the figure. As $u$ has degree three or more in $G$, both $a$ and $b$ are adjacent to $u$. 
By Lemma~\ref{lemgenK33}, $\{ w_1, w_2, w_3 \} \subset N(b)$. Without loss of generality, we can assume $aw_1 \in E(G)$. Then $G - a, w_1$ is planar and $G$ is $2$--apex.

Suppose, then, that $\Gab$ is the graph at right in Figure~\ref{figG710}.
%As $u$ has degree
%three or more in $G$, at least one of $a$ and $b$ is a neighbour of $u$.
By Lemma~\ref{lemgenK33}, $\{ w_2, w_3 \} \subset N(a) \cap N(b)$ and at least one of $w_1$ or $u$ is a neighbour of each
$a$ and $b$. Now, as $G$ is not $2$--apex, $G - w_2, w_3$ is non--planar and it is also a graph on seven vertices and ten edges with either $u$ or $w_1$ of degree 
at least four. In other words, $G - w_1, w_2$ is the graph
on the left of Figure~\ref{figG710}, a case we considered earlier. 

We have shown that if $\|G \| = 20$, then $G$ is $2$--apex. It follows that 
the same is true for graphs with $\|G \| \leq 20$.
\end{proof}

\subsection{Ten vertices}

In this subsection we prove Theorem~\ref{thmain} for graphs of ten vertices. 
We begin with a lemma that treats the case of a graph of degree four.

\begin{lemma} \label{lem104}%
Suppose $G$ is a graph with $|G| = 10$, $\|G\| = 20$, and such that every vertex has degree four. Then $G$ is $2$--apex.
\end{lemma}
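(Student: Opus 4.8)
The plan is to argue by contradiction, assuming $G$ is a $4$-regular graph on $10$ vertices and $20$ edges that is not $2$-apex, and to analyse the induced subgraphs $\Gab$ obtained by deleting two vertices. Since $G$ is $4$-regular, deleting any two vertices $a,b$ removes at most $8$ edges, so $\|\Gab\| \geq 12$; and if $a,b$ are nonadjacent with $|N(a)\cap N(b)|=k$, we remove exactly $8-k$ edges, giving $\|\Gab\| = 12+k$, while if $ab\in E(G)$ we remove $7-k$ edges and $\|\Gab\| = 13+k$. Because $G$ is not $2$-apex, every such $\Gab$ is non-planar, and by Lemma~\ref{lemnonpl} (with $n=8$) a non-planar graph on $8$ vertices with $\mind{} \geq 1$ has at least $8+3-\lfloor 2/2\rfloor = 10$ edges, which is automatically satisfied here; the real leverage comes from the \emph{upper} edge count. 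First I would look for a pair $a,b$ minimising $\|\Gab\|$: averaging, or a short counting argument using that $\sum_v d(v) = 40$ and the number of edges inside $N(a)$ is small, should produce a pair with $\|\Gab\| = 12$, i.e. nonadjacent $a,b$ with no common neighbour. (If no such pair exists, every pair of nonadjacent vertices has a common neighbour and every pair of adjacent vertices is ``tight'' in a way that pins down $G$ to a very short list — for instance $K_{4,4}$ plus structure, or the complement of a specific graph — which I would handle separately.)

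The heart of the argument is then the case $\|\Gab\| = 12$ with $\Gab$ non-planar on $8$ vertices. By Remark~\ref{rmknonpl}, such a $\Gab$ is either one of the $15$ graphs on $10$ vertices and $12$ edges — no, rather I want the $8$-vertex, $12$-edge non-planar graphs; these are not tabulated explicitly in the excerpt, but the same reasoning applies: $\Gab$ has a component $G'$ with $\chi(G') \leq -3$, the remaining components being trees. Since $\Gab$ here has all its degrees at least $4 - 2 = 2$ actually at least $d_G(v) - 2 \geq 2$, wait — in $\Gab$ a vertex $v \neq a,b$ has degree $4$ minus the number of its edges to $\{a,b\}$, so degree between $2$ and $4$; in particular $\mind{\Gab} \geq 2$, so $\Gab$ has \emph{no tree components} and must itself be a connected (or $2$-component, but a second component would need $\geq 3$ vertices and $\geq 3$ edges hence $\chi \geq 0$ forcing it to contain a cycle, fine) non-planar graph with $\chi(\Gab) = 8 - 12 = -4$. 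So $\Gab$ is connected, $8$ vertices, $12$ edges, minimum degree $\geq 2$, non-planar — a short finite list, each containing a $K_{3,3}$ minor (a $K_5$ minor would force $\chi \leq -5$ in its component, impossible with $12$ edges on $8$ vertices unless there is a separate tree, excluded). Thus $(\Gab; c)$ is a generalised $K_{3,3}$ for a suitable vertex $c$ of degree $\geq 3$, and Lemma~\ref{lemgenK33} applies: $N_G(a)$ and $N_G(b)$ each meet each of $W_1, W_2, W_3$.

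Now I would push the degree bookkeeping: $a$ has exactly $4$ neighbours, all lying in $V(\Gab) = V(G)\setminus\{a,b\}$ since $ab\notin E(G)$ and $a,b$ share no neighbour — wait, $c$ is among them or not — in any case $a$'s $4$ neighbours must include one vertex from each of $W_1, W_2, W_3$, using $3$ of them, leaving exactly one more neighbour of $a$; similarly for $b$, whose $4$ neighbours are disjoint from $a$'s since $a,b$ have no common neighbour. That is very restrictive: together $a$ and $b$ use $8$ distinct vertices of the $8$-vertex set $V(\Gab)$, so every vertex of $\Gab$ is adjacent in $G$ to exactly one of $a,b$, and each vertex of $\Gab$ has degree exactly $3$ in $\Gab$, making $\Gab$ $3$-regular on $8$ vertices — but a $3$-regular graph on $8$ vertices has $12$ edges, consistent, and is non-planar: so $\Gab$ is one of the very few $3$-regular non-planar graphs on $8$ vertices (essentially $K_{3,3}$ with a handle, the Wagner graph $V_8$, $K_{3,3}$ plus an edge subdivided appropriately, etc.). I expect the main obstacle to be this final step: producing, for each such $3$-regular $\Gab$ together with the forced attachment pattern of $a$ and $b$ (one neighbour in each $W_i$, plus constraints from the $V_2, V_3$ trees), an \emph{explicit} planar subgraph $G - x, y$ for some other pair $x,y$. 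This is the same ``more involved'' case-check that appears in Lemmas~\ref{lem91} and will likely require drawing the handful of configurations and exhibiting the planar embeddings by hand; once the list is cut down to $3$-regular $\Gab$, however, it is a bounded check, and by symmetry of such graphs only one or two genuinely distinct cases remain. Should the minimising pair have $\|\Gab\| = 13$ instead (the adjacent, no-common-neighbour case), the analogous but looser count still forces $\Gab$ to have a $K_{3,3}$ minor and leaves one spare neighbour each for $a$ and $b$, and I would dispatch it by the same generalised-$K_{3,3}$ mechanism.
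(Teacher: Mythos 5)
There are genuine gaps, and they start with an arithmetic slip that steers the whole strategy in the wrong direction. For a $4$--regular $G$, deleting a nonadjacent pair $a,b$ removes exactly $8$ edges no matter how many common neighbours they have (an edge $av$ and an edge $bv$ are distinct edges even when $v\in N(a)\cap N(b)$), so $\|\Gab\|=12$ for \emph{every} nonadjacent pair and $13$ for every adjacent pair; your formula $\|\Gab\|=12+k$ is false, and minimising $\|\Gab\|$ does not produce a pair with $N(a)\cap N(b)=\emptyset$. Worse, such a pair need not exist at all: in the circulant $C_{10}(1,4)$ (vertices $\Z_{10}$, $i\sim i\pm1,i\pm4$), which is $4$--regular on $10$ vertices, every nonadjacent pair has a common neighbour, so your main line never gets started there, and your parenthetical fallback (``a very short list \dots\ handled separately'') is exactly the unproved part. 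The paper goes the opposite way: it picks three pairwise nonadjacent vertices $a,b,c$ (otherwise $G=K_5\sqcup K_5$, which is $2$--apex) and uses pigeonhole inside the remaining seven vertices to force $a,b$ to \emph{share} a neighbour, so that $\Gab$ has a degree--two vertex while $c$ keeps degree four; the common neighbour is the engine of the eventual contradiction.

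Even in the favourable case where a nonadjacent pair with disjoint neighbourhoods exists, your key step fails: if $\Gab$ is $3$--regular on $8$ vertices with $12$ edges, then for any vertex $c$ the graph $\Gab-c$ has $9$ edges on $7$ vertices, whereas $(\Gab;c)$ being a generalised $K_{3,3}$ requires $\Gab-c$ to be topologically equivalent to $K_{3,3}-v_1$, whose simplification has Euler characteristic $-1$; since the simplification moves never decrease $|V|-\|\cdot\|$ below that of the original graph, one needs $\|\Gab-c\|\leq 8$. So Lemma~\ref{lemgenK33} is simply inapplicable to your configuration, and the paper's contradiction mechanism --- forcing some vertex into $N(a)\cap N(b)$ so that its degree becomes five --- is structurally unavailable to you, precisely because you arranged $N(a)\cap N(b)=\emptyset$. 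What remains in your plan is an unexecuted ad hoc check of the non-planar cubic graphs on eight vertices (e.g.\ the Wagner graph; note ``$K_{3,3}$ with an edge subdivided'' is not cubic) together with all partitions of their vertex sets into $N(a)$ and $N(b)$. By contrast, the paper smooths the forced degree--two vertex of $\Gab$, matches the result against the $7$--vertex, $11$--edge lists of Remark~\ref{rmknonpl} to pin down the degree sequences $\{4,3,3,3,3,3,3,2\}$ and $\{4,4,3,3,3,3,2,2\}$, and then applies Lemma~\ref{lemgenK33} at the degree--four vertex, where it does apply, to contradict $4$--regularity. As written, your argument has both an unhandled class of graphs and a broken main tool, so it does not establish the lemma.
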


\begin{proof}
We can assume that $G$ has at least three vertices $a$, $b$, and $c$ that are pairwise non--adjacent for otherwise $G$ must be $K_5 \sqcup K_5$ and is $2$--apex. Then $\maxd{\Gab} = 4$
as $c$ will retain its full degree in $\Gab$. Also, $\mind{\Gab} = 2$; since $c \not\in N(a) \cup N(b)$, $a$ and $b$ must share at least
one neighbour in the remaining seven vertices. This will become a degree
two vertex in $\Gab$. 

Now, $\Gab$ is a graph on eight vertices and 12 edges with at least one degree two vertex. Smoothing that vertex, we arrive at $G'$, a multigraph on seven vertices and 11 edges that we can take to be non--planar (otherwise $G$ is $2$--apex). In other words, $G'$ is either one of
the graphs in Figure~\ref{figG711} or else one of the two graphs in
Figure~\ref{figG710} with an edge doubled. Moreover, $\maxd{G'} = 4$ and
$\mind{G'} \geq 2$. Examining these candidates for $G'$, we see that
$\Gab$ has degree sequence $\{4,3,3,3,3,3,3,2\}$ 
or $\{4,4,3,3,3,3,2,2\}$.

\begin{figure}[ht]
\begin{center}
\includegraphics[scale=0.6]{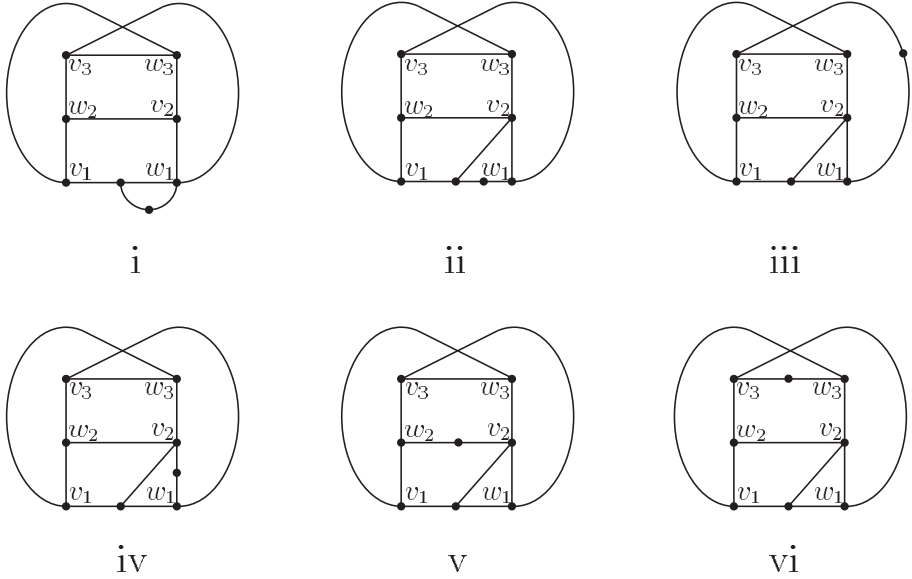}
\caption{The six non--planar graphs with degree sequence $\{4, 3,3,3,3,3,3,2\}$.}\label{figG842}
\end{center}
\end{figure}

The six non--planar graphs with degree sequence $\{4,3,3,3,3,3,2\}$ 
(see Figure~\ref{figG842}) are
obtained by either doubling an edge at $u$ in the graph on the right of 
Figure~\ref{figG710} or else by adding a degree two vertex to graph v of 
Figure~\ref{figG711}. If $\Gab$ is one of the graphs ii, iii, or iv in Figure~\ref{figG842}, then we argue that $G$ is $2$--apex as follows. 
By applying Lemma~\ref{lemgenK33} to $(\Gab ; v_2)$, we find $\{w_2,w_3\} \subset N(a) \cap N(b)$.
But then $d(w_2) = d(w_3) = 5$, contradicting our hypothesis
that all vertices have degree four. A similar argument (using $(\Gab; w_1)$ and $v_2,v_3$ in place of $w_2,w_3$)
 applies when $\Gab$ is graph i. For graphs v and vi, the same approach shows that at least one of $w_2$ and $w_3$ has degree five. The contradiction shows that
$G$ is $2$--apex in case $\Gab$ has degree sequence $\{4,3,3,3,3,3,2\}$.

So, we may assume $\Gab$ has degree sequence $\{4,4,3,3,3,3,2,2\}$. Then
$\Gab$ is either obtained by doubling an edge of the graph at right in 
Figure~\ref{figG710} or else by adding a degree two vertex to graph iii, iv, vi, or viii of Figure~\ref{figG711}. 

Suppose first that $\Gab$ comes from doubling an edge of the right graph of Figure~\ref{figG710} (and adding a degree two vertex to one of the two edges in the double). Up to symmetry, the doubled edge is either $v_1 w_2$ or else
$v_2 w_2$. In either case, ($\Gab$; $w_2$) is a generalised $K_{3,3}$, 
whence $v_3 \in N(a) \cap N(b)$.
But then $d(v_3) = 5$ in contradiction to our hypotheses. So $G$ is $2$--apex in this case. 

Finally, to complete the proof, suppose
$\Gab$ is graph iii, iv, vi, or viii of Figure~\ref{figG710}. 
The strategy here is similar to the previous case. We identify a degree four vertex, $c$, of $\Gab$, ($c$ is $v_2$, except for graph viii in which case $c$ is $v_1$) and observe that $(\Gab;c)$ is a generalised
$K_{3,3}$. We then find a vertex $x$ (either $w_2$ or $w_3$ depending on the placement of the degree two vertex) that must lie in $N(a) \cap N(b)$. 
Consequently $d(x) = 5$, a contradiction.
The contradiction shows that $G$ is $2$--apex.
\end{proof}

We can now prove Theorem~\ref{thmain} for graphs of ten vertices.

\begin{prop} A graph $G$ with $|G| = 10$ and $\|G \| \leq 20$ is $2$--apex.
\end{prop}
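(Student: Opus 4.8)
The plan is to follow the inductive template used throughout this section. By Lemma~\ref{lemd12} we may assume $\|G\| = 20$ and $\mind{G}\geq 3$; since $|G| = 10$, the degree sum is $40$, so $4 \leq \maxd{G} \leq 9$. If $\maxd{G} = 4$, the degree sum forces every vertex to have degree exactly four and Lemma~\ref{lem104} applies, so we may assume $\maxd{G}\geq 5$. As in the earlier propositions, we argue by contradiction: assume $G$ is not $2$--apex, so every $\Gab$ obtained by deleting two vertices is non--planar. Since $\mind{G}\geq 3$ gives $\mind{\Gab}\geq 1$, Lemma~\ref{lemnonpl} yields $\|\Gab\| \geq 10$ for every such pair.

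The next step is to choose $a$ and $b$ so that $\|\Gab\|$ is as small as possible. Note that $\|\Gab\|$ equals $20 - d(a) - d(b)$ when $ab \notin E(G)$ and $21 - d(a) - d(b)$ when $ab \in E(G)$. A short degree count, in the style of the proof of Proposition~\ref{prop820}, then shows: if $\maxd{G}\geq 6$ then $a$ and $b$ can be chosen with $\|\Gab\| \leq 10$, and if $\maxd{G} = 5$ then $a$ and $b$ can be chosen with $\|\Gab\| \leq 11$ (taking two degree--five vertices when there are two, and otherwise noting that the unique degree--five vertex must have a non--neighbour of degree four, since the degree sequence is then $\{5,4,4,4,4,4,4,4,4,3\}$). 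Together with Lemma~\ref{lemnonpl} and the enumeration in Remark~\ref{rmknonpl}, this leaves only finitely many possibilities for the non--planar $\Gab$: it is $K_{3,3}\sqcup K_2$, or, in the case $\maxd{G} = 5$, one of the eleven non--planar graphs on eight vertices and eleven edges shown in Figure~\ref{figG811}.

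It then remains to eliminate each of these roughly a dozen candidates for $\Gab$, exactly as in the proofs of Lemmas~\ref{lem91} and \ref{lem104} and Proposition~\ref{prop920}. Each candidate -- or, for the disconnected ones, its non--planar component -- is a generalised $K_{3,3}$, so for a suitable vertex $c$ the pair $(\Gab;c)$ is a generalised $K_{3,3}$ and Lemma~\ref{lemgenK33} (with Remark~\ref{rmkgenK33} when $\Gab$ is disconnected) forces both $N(a)$ and $N(b)$ to contain a vertex of each of $W_1$, $W_2$, and $W_3$. Combined with the degree bounds $\mind{G}\geq 3$ and $\maxd{G}\leq 5$ (or $\maxd{G}\leq 9$ when $\Gab = K_{3,3}\sqcup K_2$), this often forces some $W_i$ vertex to have degree exceeding $\maxd{G}$, a contradiction; in the remaining sub-cases one instead narrows the degree sequence (for instance, forcing $\maxd{G} = 6$ and each $w_i$ to have degree five) and then exhibits an explicit planar $G-x,y$, contradicting the assumption that $G$ is not $2$--apex.

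The main obstacle is precisely this last, hands-on part of the case analysis: for several of the candidate graphs -- just as for graphs v and vi in the proof of Lemma~\ref{lem91} -- the degree count does not by itself produce a vertex of excessive degree, and one must produce a planar embedding of some $G-x,y$ by direct inspection. The sub-case $\maxd{G} = 5$ with all degree--five vertices of $G$ mutually adjacent is the one most resistant to a uniform argument and is perhaps cleanest to isolate as a preliminary lemma, in the spirit of Lemma~\ref{lem91}.
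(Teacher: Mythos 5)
Your skeleton matches the paper's: reduce to $\|G\|=20$ and $\mind{G}\geq 3$ by Lemma~\ref{lemd12}, send the $4$--regular case to Lemma~\ref{lem104}, pick $a,b$ minimizing $\|\Gab\|$, and use Lemma~\ref{lemnonpl} with Remark~\ref{rmknonpl} to reduce to $\Gab=K_{3,3}\sqcup K_2$ or one of the eleven graphs of Figure~\ref{figG811}, then exploit Lemma~\ref{lemgenK33}/Remark~\ref{rmkgenK33}. Your opening reduction is in fact slightly cleaner than the paper's (you observe that $\maxd{G}\geq 6$ already yields a pair with $\|\Gab\|\leq 10$, whereas the paper treats $\maxd{G}=7$ and $\maxd{G}=6$ by separate degree--sequence arguments). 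But there is a genuine gap: the decisive elimination of the candidate $\Gab$'s is deferred rather than carried out, and the mechanisms you name for it are not the ones that work. In the $K_{3,3}\sqcup K_2$ case, ``some $W_i$ vertex has degree exceeding $\maxd{G}$'' does not bite (those degrees only rise to $5$, and you allow $\maxd{G}\leq 9$); the actual contradiction is that the two pendant vertices must be adjacent to both $a$ and $b$ while Remark~\ref{rmkgenK33}, applied to both parts of $K_{3,3}$, puts all six $v_i,w_i$ in $N(a)\cap N(b)$, forcing $d(b)\geq 8$ — which kills $\maxd{G}\leq 7$, while $\maxd{G}\geq 8$ must be dismissed beforehand by choosing a pair with $\|\Gab\|\leq 9$ (hence planar). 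As written, your argument does not close this case.

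More importantly, for the hard $\maxd{G}=5$ branch you anticipate having to ``exhibit an explicit planar $G-x,y$'' graph by graph, and you flag this as the main obstacle; that is exactly what the paper's organizing device avoids, and that device is absent from your proposal. The paper first disposes of any pair $a',b'$ with $\|G-a',b'\|=10$ (that subgraph must be $K_{3,3}\sqcup K_2$, handled once and for all), and may then assume $\|G-a',b'\|\geq 11$ for \emph{every} pair. Under this minimality assumption, Lemma~\ref{lemgenK33} gives $\{w_2,w_3\}\subset N(a)\cap N(b)$, so either some $w_i$ has degree $>5$, or $d(w_2)=d(w_3)=5$ with $w_2w_3\notin E(G)$, whence $\|G-w_2,w_3\|=10$ — contradicting minimality with no planarity check at all. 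This single counting argument handles nine of the eleven graphs of Figure~\ref{figG811} uniformly (with only graphs x and xi needing extra care), whereas your fallback would require inspecting many neighbour configurations by hand; your parenthetical ``forcing $\maxd{G}=6$'' inside the $\maxd{G}=5$ branch also does not parse. So the proposal is a plausible plan with the right architecture, but it is missing both the execution of the case analysis and the key idea (the all-pairs $\geq 11$ reduction) that makes that analysis tractable.
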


\begin{proof}
Suppose $|G|=10$ and $\|G \| = 20$. Then $9 \geq \maxd{G} \geq 4$. 
By Lemma~\ref{lemd12}, we can take $\mind{G} \geq 3$ and by Lemma~\ref{lemnonpl}, if $\Gab$ is non--planar, it has at least ten edges.
So, we may assume $\maxd{G} \leq 7$ as, otherwise, there are vertices
$a$ and $b$ so that $\| \Gab \| < 10$ whence $G$ is $2$--apex.

If $\maxd{G} = 7$, then $G$ is $2$--apex unless every subgraph $\Gab$ has at least ten edges. So, we can assume $G$ has degree sequence $\{ 7, 4,4,4,4,4,4,3,3,3 \} $ with each of the degree four vertices adjacent to the
vertex, $a$, of degree seven. For almost all choices of $b$, $\| \Gab \| = 10$ so that, by Remark~\ref{rmknonpl}, $\Gab = K_{3,3} \sqcup K_2$. Then 
$\Gab$ has two degree one vertices which must arise from degree three vertices of $G$ from which two edges have been deleted. This implies $a$
is adjacent to at least two degree three vertices in $G$. This is a contradiction as $N(a)$ includes only one degree three vertex, the remaining six vertices being those of degree four. The contradiction shows
that $G$ is $2$--apex in case $\maxd{G} = 7$.

If $\maxd{G} = 4$, then, in fact every vertex of $G$ has degree four. 
This case is treated in Lemma~\ref{lem104}. Thus, the remainder of 
this proof treats the case where $\maxd{G} = 6$ or $5$. Then there are vertices $a$ and $b$ such that $ \| \Gab \| \leq 11$. By Remark~\ref{rmknonpl} we may assume $\Gab$ is either $K_{3,3} \sqcup K_2$ or else one of the graphs in Figure~\ref{figG811}. Further, we will assume 
$\maxd{G} = d(a) \geq d(b)$.

Suppose $\Gab$ is $K_{3,3} \sqcup K_2$ and let $v_1, v_2, v_3$ and $w_1,w_2,w_3$ be the vertices in the two parts of $K_{3,3}$ while $u_1,u_2$
will denote the vertices of $K_2$.  By Remark~\ref{rmkgenK33},
$(K_{3,3};v_1)$ shows
$\{w_1, w_2, w_3 \} \subset N(b)$. Similarly, $(K_{3,3}; w_1)$ implies $\{v_1, v_2, v_3 \} \subset N(b)$. Finally, as $u_1$ and $u_2$ have degree one in 
$\Gab$, both must be adjacent to $b$ in $G$. This implies $d(b) \geq 8$ 
which contradicts our assumption that $\maxd{G} \leq 6$. The contradiction
shows that $G$ is $2$--apex in case it has a subgraph of the form $K_{3,3} \sqcup K_2$.

We may now assume that $\| \Gab \| = 11$ and that for any other pair
$a'$, $b'$, $\| G - a',b' \| \geq 11$. This allows us to dismiss the
case where $\maxd{G} = d(a) = 6$. Indeed, the condition $\| G - a',b' \| \geq 11 $ then implies that the other vertices of $G$ have degree at most four
and each degree four vertex is adjacent to $a$. But then $G$ would have degree sequence $\{ 6, 4,4,4,4,4,4,4,3,3 \} $ and there are too many 
degree four vertices for them all to be adjacent to $a$. The contradiction
shows that $G$ is $2$--apex in this case.

Suppose then that $\maxd{G} = 5$, $\mind{G} \geq 3$, and that for
every choice of $a'$ and $b'$, $\| G - a', b' \| \geq 11$. Further, let $a$ and $b$ be vertices such that $\| \Gab \| = 11$.
Then $\Gab$ is one of the graphs in 
Figure~\ref{figG811} and we can assume that $d(a) = 5$. The following argument applies to all but the last two graphs in the figure. 

By Lemma~\ref{lemgenK33} (or Remark~\ref{rmkgenK33}), $\{w_2, w_3 \} \subset N(a) \cap N(b)$.
However, either this is already a contradiction because $w_2$ or $w_3$ now has degree greater than
$\maxd{G} = 5$, or else, $d(w_2) = d(w_3) = 5$. In the latter case, as $w_2 w_3 \not\in E(G)$ then $\| G - w_2, w_3 \| = 10$, contradicting our assumption that $\| G - a',b'\| \geq 11$. The contradiction shows that $G$
is $2$--apex.

Similar considerations show that if $\Gab$ is graph x or xi of Figure~\ref{figG811}, then, again, $G$ must be $2$--apex. This completes the argument
in the case that $\|G \| = 20$. 

We have shown that if $\|G \| = 20$, then $G$ is $2$--apex. It follows that 
the same is true for graphs with $\|G \| \leq 20$.
\end{proof}

\subsection{Eleven vertices}

In this subsection, we prove Theorem~\ref{thmain} for graphs of 11 vertices.
We begin with a lemma that handles the case where $\maxd{G} = 4$.

\begin{lemma} \label{lem114}%
Let $G$ have $|G| = 11$, $\|G\| = 20$, and $\maxd{G} = 4$.
Then $G$ is $2$--apex.
\end{lemma}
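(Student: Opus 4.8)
The plan is to follow the same contradiction strategy used throughout this section. Suppose $G$ has $|G|=11$, $\|G\|=20$, $\maxd{G}=4$, and assume for contradiction that $G$ is not $2$--apex, so every induced subgraph $\Gab$ is non--planar. By Lemma~\ref{lemd12} we may assume $\mind{G}\geq 3$, so the degree sequence of $G$ consists only of $3$'s and $4$'s. Since $2\|G\|=40$ and there are $11$ vertices, counting gives exactly eighteen edge-endpoints of degree $4$ and fifteen of degree $3$; that is, $G$ has nine vertices of degree $4$ and two of degree $3$. The first step is to extract from this a good pair $a,b$ to delete: since $\maxd{G}=4$, deleting any two vertices removes at most $8$ edges, so $\|\Gab\|\geq 12$, and we can choose $a,b$ so that $\|\Gab\|$ is as small as possible. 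A short counting argument (using that $G$ cannot be a disjoint union of copies of $K_5$ because $\maxd{G}=4$ forces, via $\mind{G}\ge 3$, at least three pairwise non-adjacent vertices, or because $\|G\|=20\neq 25$) shows we may take $\|\Gab\|\leq 12$, and in fact $a$ and $b$ can be chosen non-adjacent with a common neighbour, so $\mind{\Gab}\leq 2$ while $\maxd{\Gab}\leq 4$.

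The second step reduces $\Gab$ to one of the explicitly catalogued small graphs. By Lemma~\ref{lemnonpl} with $n=9$, a non--planar graph on nine vertices with $\mind{}\geq 1$ has at least $9+3-\lfloor 3/2\rfloor = 11$ edges, so $\|\Gab\|\in\{11,12\}$. When $\|\Gab\|=11$, Remark~\ref{rmknonpl} tells us $\Gab$ is one of the three graphs on nine vertices and eleven edges (two of which are $K_2$ joined to a graph of Figure~\ref{figG710}, the third being $K_{3,3}$ plus a path of two edges). When $\|\Gab\|=12$, I would smooth the degree two vertices guaranteed by $\mind{\Gab}\leq 2$ to land on a non--planar multigraph of at most eleven edges and at most eight vertices, so that Remark~\ref{rmknonpl} and Figures~\ref{figG811}, \ref{figG710}, \ref{figG711} again enumerate the possibilities, now additionally constrained by $\maxd{}\leq 4$ (the resulting multigraph has maximum degree $\le 4$ as well, since smoothing does not raise degrees above those in $G$). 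In each case, $\Gab$ (or its relevant non--planar component, by Remark~\ref{rmkgenK33}) contains a generalised $K_{3,3}$ pair $(\Gab;c)$, and this is the geometric engine of the argument.

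The third and decisive step applies Lemma~\ref{lemgenK33} (or Remark~\ref{rmkgenK33}): for an appropriate choice of $c$, we get that $N(a)$ and $N(b)$ each meet each of $W_1,W_2,W_3$. Since the partition sets $W_i$ are trees contracting to single vertices and (after choosing the partition so $V_2,V_3$ are minimal) are typically singletons $\{w_i\}$, this forces $a,b$ to be adjacent to vertices whose degree in $\Gab$ is already close to $4$, pushing their degree in $G$ above $\maxd{G}=4$ — an immediate contradiction. In the remaining borderline cases where the forced common neighbours $w_i,w_j$ have degree exactly $4$ in $G$ but are non-adjacent, we instead observe that $\|G-w_i,w_j\|=20-8 = 12 = \|\Gab\|$ is no contradiction by itself, so one re-runs the minimality choice: among all pairs achieving the minimum edge count, pick one, derive that the forced common neighbours have degree $4$, and then either they are adjacent (so deleting one of them and a degree-$4$ neighbour drops below the minimum) or they witness a strictly smaller $\|G-x,y\|$ after a further deletion of a shared neighbour, contradicting minimality. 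I expect the main obstacle to be the bookkeeping in this last step: there are several candidate shapes for $\Gab$ (especially the eight-vertex, twelve-edge cases of Figure~\ref{figG811} and the twelve-edge multigraph reductions), and for each one must verify that the degree-$4$ ceiling genuinely collides with the generalised-$K_{3,3}$ neighbour constraints; the two or three "graph v/vi"-type configurations, where only one of $w_2,w_3$ is immediately forced, will need the slightly more involved minimality argument rather than a one-line degree count.
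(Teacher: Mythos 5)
Your setup contains an arithmetic slip and, more importantly, your decisive step does not cover the case that occupies most of the actual proof. First, the small point: with $\mind{G}\geq 3$, $\|G\|=20$ and $|G|=11$, the degree sequence is seven vertices of degree $4$ and four of degree $3$ (degree sum $40$), not nine of degree $4$ and two of degree $3$ (that would sum to $42$). This matters for the later counting of low--degree vertices in $\Gab$. Also, once $a,b$ are non--adjacent degree--four vertices, $\|\Gab\|$ is exactly $12$; the value $11$ you allow cannot occur, so the whole argument lives in the nine--vertex, twelve--edge situation, where after smoothing the (at least two) vertices of degree at most two one lands on a seven--vertex, ten--edge non--planar multigraph. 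Up to here your plan matches the paper.

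The genuine gap is in your third step. The degree--overflow argument (a forced common neighbour of $a$ and $b$ would have degree $5$) does dispose of the cases where some $W_i$ or $V_i$ of the generalised $K_{3,3}$ is a singleton, and this is exactly how the paper kills four of the five candidate reductions. But when the reduced graph is the right--hand graph of Figure~\ref{figG710} and the low--degree vertices sit so that $\Gab$ is graph i of Figure~\ref{figG933222} ($K_{3,3}$ with three subdivision vertices $u_1,u_2,u_3$ spread over three independent edges), Lemma~\ref{lemgenK33} only forces $a$ and $b$ to have neighbours in two--element sets, and those neighbours can be the degree--two vertices $u_i$ -- no degree contradiction arises. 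Your fallback, re--running a minimality choice of $\|G-x,y\|$, cannot work either: since $\maxd{G}=4$, every pair of non--adjacent degree--four vertices already achieves the global minimum $\|G-x,y\|=12$, and Lemma~\ref{lemnonpl} does not exclude non--planar nine--vertex graphs with $12$ edges, so there is nothing ``strictly smaller'' to contradict. In the paper this residual case is settled only by an explicit enumeration of the possible neighbourhoods, e.g.\ $N(a)=\{u_1,u_2,u_3,v_1\}$ and the three symmetry classes of $N(b)$, each dispatched by exhibiting a concrete planar deletion such as $G-u_3,v_3$ or $G-u_1,v_1$. That enumeration, with actual planarity checks of specific two--vertex deletions, is the core of the lemma and is missing from your proposal; the tools you name (degree ceiling plus minimality) are not capable of replacing it.
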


\begin{proof}
By Lemma~\ref{lemd12}, we can take $\mind{G} \geq 3$ so that $G$ has degree
sequence $\{4,4,4,4,4,4,4,3,3,3,3\}$. Let $a$ and $b$ be two non--adjacent vertices of degree four. Then $\Gab$ has nine vertices and 12 edges. 
%We can assume that $\Gab$ has at most four vertices of degree two or less. For otherwise, $\Gab$ has as topological simplification a multigraph $G'$
%on five vertices and eight edges. So, $G'$ is planar whence $\Gab$ is planar and $G$ is $2$--apex.
Since $\| \Gab \| = 12$ and $\mind{\Gab} \geq 1$, we see that 
$\Gab$ has at least two vertices of degree less than two. Deleting or smoothing those two, we arrive at a multigraph $G'$ with seven vertices and
ten edges. We can assume $G'$ is non--planar as otherwise $\Gab$ is planar and $G$ is $2$--apex. Thus $G'$ is either one of the two graphs in Figure~\ref{figG710}, $K_{3,3} \sqcup C_1$ where $C_1$ is a loop on a single vertex, 
$K_5 \sqcup K_1 \sqcup K_1$, or else the union of $K_1$ and $K_{3,3}$ with an extra edge. We will consider these five possibilities in turn.
%but first we describe the strategy we will use. The idea is that $G - b,v_1$ will be planar unless $a$ takes on the role of $v_1$. Basically, this means that $a$ must have $w_1$, $w_2$, and $w_3$ as neighbours. However, things are complicated by potential degree one or two vertices near the $w_i$'s. For example, in the graph at the right of FIgure~\ref{figG710}, rather than $aw_1$ the edge $au$ would serve the same purpose of
%allowing $a$ to replace $v_1$. Nonetheless, we will argue that there is at least one vertex $v_i$ or $w_i$ that is a neighbour of $a$. By symmetry, $b$ is also a neighbour of the same vertex which means that vertex has degree five in $G$. This
%contradicts our bound on the maximum degree of $G$ and shows that $G$ is $2$--apex.

If $G'$ is $K_5 \sqcup K_1 \sqcup K_1$, then $\Gab = K_5 \sqcup K_2 \sqcup K_2$. In order to bring the four degree one vertices of $\Gab$ up to degree three in $G$, each must be adjacent to both $a$ and $b$. Then the induced subgraph on $a$, $b$, and the vertices of the two $K_2$'s is planar so that $G$ is not only $2$--apex, it's actually $1$--apex.

Suppose next that $G'$ is the union of $K_1$ and $K_{3,3}$ with an extra edge.
Let $v_1,v_2,v_3$ and $w_1,w_2,w_3$ be the vertices in the two parts of
$K_{3,3}$. Without loss of generality, the extra edge of $K_{3,3}$ is
either $v_1w_1$ (doubling an existing edge) or else $v_1 v_2$. 
By Remark~\ref{rmkgenK33}, $a$ and $b$ both have neighbours in the
three sets $W_1$, $W_2$, and $W_3$. Moreover, at least one of these three
sets consists of a single vertex $w$. But then $d(w) = 5$, a contradiction. 
The contradiction shows that $G$ is $2$--apex in this case.
If $G' = K_{3,3} \sqcup C_1$ or $G'$ is the graph at the left of Figure~\ref{figG710}, the same argument applies and we conclude $G$ is $2$--apex.

Now, if $G'$ is the graph at the right of Figure~\ref{figG710}, then $u$ is a degree two vertex near $w_1$ (so that $W_1$ includes at least those
two vertices) and the additional two degree one and two vertices might lie near $w_2$ and $w_3$ so that in the generalised $K_{3,3}$, $(G'; v_1)$,  none of the $W_i$'s is a single vertex.
For example, $\Gab$ may be graph i of Figure~\ref{figG933222} below. Actually, we can conclude that $\Gab$ must be graph i. For otherwise, 
examining $(\Gab;v)$ in turn for all choices of vertex $v$, 
we will discover at least one $v_i$ or $w_i$ vertex, call it $w$, that must 
lie in $N(a) \cap N(b)$ which leads to the contradiction that $d(w) = 5$. 

Thus, we are left to consider the case where $\Gab$ is graph i of Figure~\ref{figG933222} below. Each of the three vertices $u_1$, $u_2$, and $u_3$ is adjacent to at least  one of $a$ and $b$ as the $u_i$'s must have degree at least three in $G$. Without loss of generality, we can assume $u_1$ and $u_2$ are neighbours of $a$. Also, $N(a)$ must include at least one vertex from the six $v_i$ and $w_i$ vertices. Up to symmetry, this gives two cases: $\{u_1, u_2, v_1\} \subset N(a)$  and $\{u_1, u_2, v_3\} \subset N(a)$. 

Suppose first that $\{u_1, u_2, v_1\} \subset N(a)$. Then in the 
generalised $K_{3,3}$, $(\Gab;v_1)$,
$W_3 = \{w_3, u_3 \}$ and $W_3 \cap N(a) \neq \emptyset$. But, if $aw_3 \in E(G)$, then $G - b, w_3$ is planar. So we can assume that $N(a) = \{u_1, u_2, u_3, v_1 \}$. Note that $v_1 \not\in N(b)$ for otherwise $d(v_1) = 5$, contradicting our assumption about the maximum degree of $G$. Also, we've assumed that $ab \not\in E(G)$. Then $G - u_2,u_3$ is planar unless $bu_1 \in E(G)$. Similarly, $G - u_1,u_3$ and $G - u_1, u_3$ show that we can assume $u_2, u_3 \in N(b)$. Now, up to symmetry, we can assume that the 
fourth vertex of $N(b)$ is either $v_2$, $w_1$, or $w_2$, so we consider those three cases.
If $N(b) = \{u_1,u_2,u_3,v_2\}$ then $G - u_3, v_3$ is planar and $G$ is $2$--apex.
If $N(b) = \{u_1,u_2,u_3,w_1\}$ then $G - u_3, v_3$ is planar and $G$ is $2$--apex.
If $N(b) = \{u_1,u_2,u_3,w_2\}$ then $G - u_1, v_1$ is planar and $G$ is $2$--apex. 

The argument in the case that $\{ u_1, u_2, v_3 \} \subset N(a)$ is similar.
\end{proof}

Having treated the case where $\maxd{G} = 4$,
we are ready to prove Theorem~\ref{thmain} for 
graphs of 11 vertices.

\begin{prop} A graph $G$ with $|G| = 11$ and $\|G \| \leq 20$ is $2$--apex.
\end{prop}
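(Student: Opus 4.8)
The plan is to follow the same induction scheme used in the preceding subsections, reducing to graphs with $\mind{G} \geq 3$ via Lemma~\ref{lemd12} and then splitting on the maximum degree. Assume $|G| = 11$ and $\|G\| = 20$; the case $\|G\| < 20$ follows by adding isolated or low--degree vertices and invoking Lemma~\ref{lemd12}. With $\mind{G} \geq 3$ and $\|G\| = 20$, the degree sum is $40$, so $4 \leq \maxd{G} \leq 10$. If $\maxd{G} \geq 5$, then choosing $a$ to be a vertex of maximum degree and $b$ a further well--chosen vertex gives $\|\Gab\| \leq 20 - \maxd{G} - (d(b) - (\text{edges between } a,b)) $; a careful count (using that at most $\maxd{G}$ edges leave $a$ and at least three leave each other vertex) shows we can always arrange $\|\Gab\| \leq 11$. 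The case $\maxd{G} = 4$ forces the degree sequence $\{4,4,4,4,4,4,4,3,3,3,3\}$ and is exactly Lemma~\ref{lem114}, so we may assume $5 \leq \maxd{G} \leq 10$ and that there is a pair $a,b$ with $d(a) = \maxd{G} \geq d(b)$ and $\|\Gab\| \leq 11$.

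Next I would argue, as before, that if $G$ is not $2$--apex then every such $\Gab$ is non--planar, hence by Lemma~\ref{lemnonpl} has at least $\|G-a,b\| \geq 9 + 3 - \lfloor(9-6)/2\rfloor = 11$ edges when $|\Gab| = 9$. So in fact $\|\Gab\| = 11$ exactly, and moreover $\|G - a', b'\| \geq 11$ for \emph{every} pair $a', b'$. This is the key rigidity: combined with $\mind{G} \geq 3$ it sharply constrains the degree sequence. In particular, high maximum degree is quickly eliminated — if $\maxd{G}$ is large then some deletion pair removes too many edges unless almost all remaining vertices have degree exactly four and are all adjacent to the max--degree vertex, and a counting contradiction (too many degree--four neighbours) rules this out, just as in the ten--vertex proof. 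This should dispatch $\maxd{G} \in \{7,8,9,10\}$ and likely $\maxd{G} = 6$ as well, leaving only $\maxd{G} = 5$.

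For $\maxd{G} = 5$ we have $\mind{G} \geq 3$, $\|\Gab\| = 11$ for a suitable pair, and $\|G - a',b'\| \geq 11$ for all pairs. By Remark~\ref{rmknonpl}, $\Gab$ is either $K_{3,3} \sqcup K_2$, one of the eleven graphs of Figure~\ref{figG811}, or — since $|\Gab| = 9$ rather than $8$ — one of the three graphs on nine vertices and eleven edges described there, namely $K_2$ joined (disjointly) with a seven--vertex non--planar graph of ten edges, or $K_{3,3} \sqcup P_3$. Each case is handled by the now--standard device: apply Lemma~\ref{lemgenK33} (or Remark~\ref{rmkgenK33}) to an appropriate generalised $K_{3,3}$ pair $(\Gab; v)$, deduce that two vertices $w_i, w_j$ of the ``$W$--side'' must lie in $N(a) \cap N(b)$, and note this forces their degree to exceed $5$ — contradiction — or else forces $d(w_i) = d(w_j) = 5$ with $w_iw_j \notin E(G)$, so that $\|G - w_i,w_j\| = 10 < 11$, again a contradiction. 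The disjoint--$K_2$ and disjoint--tree pieces only strengthen this: their degree--one (or isolated) vertices in $\Gab$ must be lifted to degree $\geq 3$ in $G$ using only edges to $a$ and $b$, forcing $d(a)$ or $d(b)$ above $5$.

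The main obstacle, as in Lemmas~\ref{lem104} and~\ref{lem114}, will be the small handful of exceptional graphs in Figure~\ref{figG811} (the analogues of ``graphs x and xi'') where the generalised $K_{3,3}$ structure does not immediately pin down a forced common neighbour of high degree; there one must run the $(\Gab; v)$ argument over all choices of $v$, or directly enumerate the possible neighbourhoods $N(a), N(b)$ subject to $\mind{G} \geq 3$, $\maxd{G} = 5$, and $\|G - a',b'\| \geq 11$, exhibiting an explicit planar $G - x, y$ in each subcase — a finite but slightly tedious check of the type already carried out in the eleven--vertex lemma. I expect no genuinely new difficulty, only more cases, because the extra vertex (compared to the ten--vertex proposition) appears in $\Gab$ only as low--degree or tree/$K_2$ debris, which the above reductions absorb.
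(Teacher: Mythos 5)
There is a genuine gap, and it sits exactly where the paper's proof does most of its work. Your reduction rests on the claim that for $\maxd{G}=5$ one can ``always arrange $\|\Gab\| \leq 11$,'' so that every relevant $\Gab$ is one of the nine--vertex, eleven--edge graphs of Remark~\ref{rmknonpl}. That claim is false. If $G$ has degree sequence $\{5,4,4,4,4,4,3,3,3,3,3\}$ and (as is forced when $G$ is not $2$--apex) every degree four vertex is adjacent to the unique degree five vertex $a$, then for \emph{every} pair $a',b'$ one has $\|G-a',b'\| \geq 12$: the best deletions are two adjacent degree five/four vertices or two non--adjacent degree four vertices, each leaving $12$ edges. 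So the rigidity ``$\|\Gab\|=11$ exactly'' never materialises in this case, and your subsequent analysis (the three nine--vertex, eleven--edge graphs, handled by Lemma~\ref{lemgenK33}) never touches it. This is precisely the hard case in the paper: there $G-a$ is $3$--regular, $\Gab$ has degree sequence $\{3,3,3,3,3,3,2,2,2\}$, and after smoothing one is led to $K_{3,3}\sqcup C_3$ or to $K_{3,3}$ with three subdivided edges (the graphs of Figure~\ref{figG933222}), with graph i requiring an explicit enumeration of the possible neighbourhoods $N(a)$, $N(b)$ and an exhibited planar $G-x,y$ in each subcase. Nothing in your outline substitutes for that analysis, and it cannot be absorbed into the eleven--edge machinery you describe.

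A secondary, fixable gap: you assert $\maxd{G}=6$ is ``likely'' dispatched by the same counting contradiction as in the ten--vertex proof (too many degree four vertices to all be adjacent to the max--degree vertex). For eleven vertices the forced degree sequence is $\{6,4,4,4,4,3,3,3,3,3,3\}$, which has only four degree four vertices, so they \emph{can} all be adjacent to $a$ and no contradiction arises from counting alone. The paper instead notes that $a$ is adjacent to exactly two degree three vertices $c,d$, chooses $b \notin N(c)\cup N(d)$, and observes that the resulting eleven--edge $\Gab$ has no degree one vertex, whereas all three non--planar graphs on nine vertices and eleven edges do; hence $\Gab$ is planar. You would need this (or an equivalent) argument rather than the appeal to the ten--vertex case. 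The parts of your sketch covering $\maxd{G}\geq 7$, the reduction via Lemmas~\ref{lemd12} and~\ref{lemnonpl}, the $\maxd{G}=4$ case via Lemma~\ref{lem114}, and the two--degree--five--vertex subcase of $\maxd{G}=5$ do align with the paper.
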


\begin{proof}
Suppose $|G|=11$ and $\|G \| = 20$. Then $10 \geq \maxd{G} \geq 4$. 
By Lemma~\ref{lemd12}, we can take $\mind{G} \geq 3$ and by Lemma~\ref{lemnonpl}, if $\Gab$ is non--planar, it has at least 11 edges.
So, we may assume $\maxd{G} \leq 6$ as, otherwise, there are vertices
$a$ and $b$ so that $\| \Gab \| < 11$ whence $G$ is $2$--apex.
Lemma~\ref{lem114} deals with graphs having $\maxd{G} = 4$ and 
we treat the case of $\maxd{G} = 6$ in the following paragraph.

Suppose $\maxd{G} =6$ and let $a$ be a vertex of maximum degree. 
If $G$ is not $2$--apex, then, to meet the requirement that $\| \Gab \| \geq 11$
for every choice of $b$, the remaining vertices have degree three or
four with all degree four vertices adjacent to $a$. It follows that $G$ 
has degree sequence $\{6, 4, 4, 4, 4, 3, 3, 3, 3, 3, 3\}$. Then $a$ is 
adjacent to exactly two of the degree three vertices, call them $c$ and $d$. Thus $N(c) \cup N(d)$ consists of at most four other vertices beside $a$. Let $b$ be a vertex not in $N(c) \cup N(d)$. Then $\Gab$ has 
11 edges and no degree one vertex. By Remark~\ref{rmknonpl}, $\Gab$ is planar and $G$ is $2$--apex. 

So, for the remainder of the proof, we assume
$\maxd{G} = 5$. If $G$ is not $2$--apex, then, the condition $\| \Gab \| \geq 11$ implies all degree five vertices are mutually adjacent. Moreover, 
either there are vertices $a$ and $b$ with $d(a) = d(b) = 5$ and $\| \Gab \| = 11$, or else $G$ has degree sequence 
$\{ 5, 4, 4, 4, 4, 4, 3, 3, 3, 3, 3 \}$.

Suppose, first, that $\| \Gab \| = 11$ with $d(a) = d(b) = 5$. Assuming
$G$ is not $2$--apex,
by Remark~\ref{rmknonpl}, $\Gab$ is one of three graphs. 
If $\Gab$ is the union of the graph at the left of Figure~\ref{figG710} and $K_2$, then $a$ must be adjacent to each of the three degree one vertices of $\Gab$ as otherwise they will have degree at most two in $G$.
By Remark~\ref{rmkgenK33}, $\{w_1, w_2, w_3\} \subset N(a)$ which implies
$d(a) \geq 6$, a contradiction. So $G$ is $2$--apex in this case.
If $\Gab$ is either the union of the graph at the right of the figure and $K_2$ or else the union of $K_{3,3}$ and a tree on three vertices, again, $a$ must be adjacent to the two degree one vertices in the tree.
But, by Remark~\ref{rmkgenK33}, $\{ v_2, v_3, w_2, w_3 \} \subset N(a)$.
This again gives the contradiction $d(a) \geq 6$, which shows that $G$ is $2$--apex in this case as well. 

Thus, we can assume that $G$ has degree sequence $\{5, 4, 4, 4, 4, 4, 3, 3, 3, 3, 3\}$.
Further, we can assume all the degree four vertices are adjacent to $a$, the vertex of degree five. 
For otherwise, let $b$ be a degree four vertex not adjacent to $a$. Then $\| \Gab \| = 11$ so it is one of the three graphs mentioned in Remark~\ref{rmknonpl}, each of which has two
degree one vertices. As $b$ is adjacent to all the degree one vertex, it has at most two neighbours in $\{v_1, v_2, v_3, w_1,w_2,w_3\}$. That would imply
$\Gab$ is planar, a contradiction.

So, let $a$ be adjacent to all the degree four vertices.
Then $G - a$ has all vertices of degree three and, for any vertex $b$, $\Gab$ has degree sequence $\{3,3,3,3,3,3,2,2,2 \}$. Smoothing one of the degree two vertices, we have the multigraph $G'$ with $|G'| = 8$ and $\| G' \| = 11$. If $G$ is not $2$--apex, then $G'$ is non--planar and,
by Remark~\ref{rmknonpl}, is either $K_{3,3} \sqcup K_2$ with one 
edge doubled or else it is one of the graphs of Figure~\ref{figG811} 
with an additional degree two vertex. Then $\Gab$ is either $K_{3,3} \sqcup C_3$, where $C_3$ is the cycle of three vertices, or else $\Gab$ is $K_{3,3}$ with the addition of three degree two vertices. However, if
$\Gab$ is $K_{3,3} \sqcup C_3$ we deduce that
$G - a$ is $K_{3,3} \sqcup K_4$. Let $v_1$ be one of the vertices of $K_{3,3}$, then $G - a,v_1$ is planar and $G$ is $2$--apex. 

\begin{figure}[ht]
\begin{center}
\includegraphics[scale=0.65]{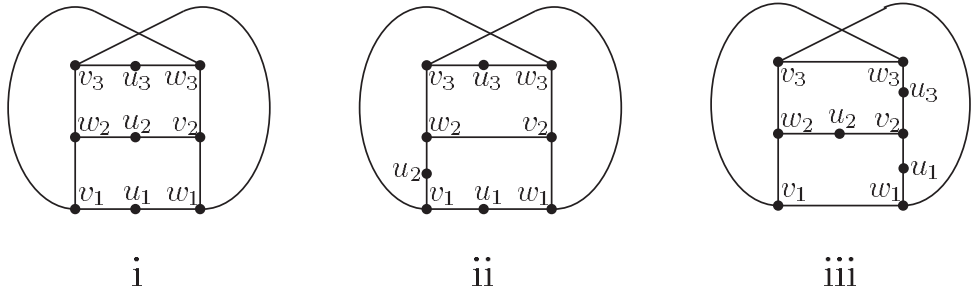}
\caption{Three non--planar graphs with degree sequence $\{3,3,3,3,3,3,2,2,2\}$.}\label{figG933222}
\end{center}
\end{figure}

So, we can assume $\Gab$ is $K_{3,3}$ with the addition of three degree 
two vertices. Let $v_1, v_2, v_3$ and $w_1, w_2, w_3$ denote the vertices in the two parts of $K_{3,3}$ as well as the corresponding vertices
in $\Gab$. Suppose the degree two vertices are all on the edges, $v_1w_1$, $v_1w_2$, and $v_2w_1$ of $K_{3,3}$. Then $G - a, v_3$ is planar so that $G$ is $2$--apex. Thus, we can assume $\Gab$ is one of the three graphs in Figure~\ref{figG933222}. Now, if $\Gab$ is graph ii or iii, then $G - a, w_3$ is planar and $G$ is $2$--apex. So, the remainder of the proof treats the case of graph i.

Assume then that $\Gab$ is graph i of Figure~\ref{figG933222} and that
$G$ is not $2$--apex. Further, let  $ab \not\in E(G)$. Since $G - u_1, u_2$ is non--planar, then $au_3 \in E(G)$ and by removing the pairs $u_1, u_3$ and $u_2, u_3$ in turn, we see that we can assume that $a$ is adjacent to $u_1$, $u_2$, and $u_3$. Then $a$ is adjacent to exactly two vertices of $K_{3,3}$, without loss of generality, either $v_1, v_2$; $v_1, w_1$; or $v_1, w_2$. Let us examine these three subcases in turn. If $N(a) = \{u_1, u_2, u_3, v_1, v_2 \}$, then $G - u_2, v_2$ is planar and $G$ is $2$--apex. 
If $N(a) = \{u_1, u_2, u_3, v_1, w_1 \}$, then $G - u_2, v_1$ is planar and $G$ is $2$--apex. 
If $N(a) = \{u_1, u_2, u_3, v_1, w_2 \}$, then $G - u_3, w_3$ is planar and $G$ is $2$--apex.
%So, we can assume that $ab \in E(G)$. Now, 
%$(\Gab; v_1)$ is a generalised $K_{3,3}$ with
%$W_1 = \{u_1, w_1 \}$, $W_2 = \{u_2,w_2 \}$, 
%and $W_3 = \{u_3, w_3 \}$. Thus, by Lemma~\ref{lemgenK33}, $N(a)$ and $N(b)$ each include at least one vertex from each $W_i$, $i = 1,2,3$.
%However, $a$ has four neighbours among the $v_i$, $w_i$, and $u_i$ vertices. It follows that at least two $u_i$ vertices, 
%say $u_1$ and $u_2$, are neighbours of $a$. Moreover, if $au_3 \not\in E(G)$, then $N(a) = \{b, u_1, u_2, v_3, w_3 \}$. On the other hand, 
%if $au_3 \in E(G)$, then, without loss of generality, $N(a) = \{b, u_1, u_2, u_3, v_1 \}$. Thus, we have two candidates for the neighbours of $a$.
%If $N(a) = \{b, u_1, u_2, v_3, w_3 \}$, then $G - v_3, w_3$ is planar and $G$ is $2$--apex.
%If $N(a) = \{b, u_1, u_2, u_3, v_1 \}$, then $G - u_1, v_1$ is planar and $G$ is $2$--apex. 
This completes the argument in case $\Gab$ is graph i of 
Figure~\ref{figG933222} and with it the case of a graph $G$ of twenty edges.

We have shown that if $\|G \| = 20$, then $G$ is $2$--apex. It follows that 
the same is true for graphs with $\|G \| \leq 20$.
\end{proof}  

\subsection{Twelve vertices}

In this subsection we prove Theorem~\ref{thmain} in the case of 
a graph of 12 vertices.

\begin{prop} A graph $G$ with $|G| = 12$ and $\|G \| \leq 20$ is $2$--apex.
\end{prop}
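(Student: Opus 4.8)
The plan is to follow the template of the preceding propositions. It suffices to treat $\|G\|=20$, and by Lemma~\ref{lemd12} we may assume $\mind{G}\geq 3$. The degree sum is then $40$, so $\sum_{v}(d(v)-3)=4$; hence the degree sequence of $G$ is one of $\{7,3^{11}\}$, $\{6,4,3^{10}\}$, $\{5,5,3^{10}\}$, $\{5,4,4,3^{9}\}$, or $\{4,4,4,4,3^{8}\}$, and in particular $\maxd{G}\leq 7$. As usual, we argue by contradiction: if $G$ is not $2$--apex then every induced subgraph $\Gab$ is non--planar, so by Lemma~\ref{lemnonpl} each such $\Gab$ (a graph on ten vertices with $\mind{\Gab}\geq 1$) has at least eleven edges. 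By Remark~\ref{rmknonpl}, the only such graph with exactly eleven edges is $K_{3,3}\sqcup K_2\sqcup K_2$, while those with twelve edges are the fifteen graphs described there. In each case the aim is to exhibit a planar $\Gab$.

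The three cases with $\maxd{G}\geq 6$, together with the degree sequence $\{5,5,3^{10}\}$, are quick. If $\maxd{G}=7$, the degree-seven vertex $a$ has four non--neighbours, all of degree three; choosing $b$ to be one of them gives $\|\Gab\|=10$, so $\Gab$ is planar. If $\maxd{G}=6$, then either the degree-six vertex $a$ has a non--neighbour of degree four (again $\|\Gab\|=10$), or all non--neighbours of $a$ have degree three, and taking $b$ to be one of them gives $\|\Gab\|=11$, hence $\Gab=K_{3,3}\sqcup K_2\sqcup K_2$; but the four degree-one vertices of $\Gab$ would then have to be raised to degree three in $G$ using only edges to $a$ and $b$, forcing all four to be adjacent to $b$, contradicting $d(b)=3$. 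If the degree sequence is $\{5,5,3^{10}\}$, let $a,b$ be the two degree-five vertices: if $ab\notin E(G)$ then $\|\Gab\|=10$; if $ab\in E(G)$ then $\|\Gab\|=11$ and $\Gab=K_{3,3}\sqcup K_2\sqcup K_2$, whose four degree-one vertices $x_1,\dots,x_4$ force $N(a)=\{b,x_1,x_2,x_3,x_4\}$ and $N(b)=\{a,x_1,x_2,x_3,x_4\}$; the $K_{3,3}$ of $\Gab$ is then a connected component of $G$, and deleting one of its vertices leaves a disjoint union of two planar graphs, so $G$ is in fact $1$--apex.

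It remains to treat $\{5,4,4,3^{9}\}$ and $\{4,4,4,4,3^{8}\}$, where $\|\Gab\|\leq 12$ is the best one can hope for and $\Gab$ ranges over the longer list. If the degree sequence is $\{5,4,4,3^{9}\}$ and the degree-five vertex $a$ is non--adjacent to some degree-four vertex $b$, then $\|\Gab\|=11$, so $\Gab=K_{3,3}\sqcup K_2\sqcup K_2$, and the reasoning of the previous paragraph again produces a $K_{3,3}$ joined to the rest of $G$ at a single vertex, whose deletion planarises $G$. For the remaining configurations --- the degree-five vertex adjacent to both degree-four vertices, and the whole of $\{4,4,4,4,3^{8}\}$ --- I would proceed as in Lemma~\ref{lem104}: first dispose of the exceptional case in which the four vertices of degree four span a $K_{4}$ (playing the role that $K_5\sqcup K_5$ plays in Lemma~\ref{lem104}), and otherwise choose $a,b$ with $\|\Gab\|\leq 12$ and $\mind{\Gab}\geq 1$. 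Then, after possibly smoothing degree-two vertices (and deleting a multiple edge if one is created), $\Gab$ is either planar or one of the non--planar graphs catalogued in Remark~\ref{rmknonpl}, each of which has a vertex $v$ for which $(\Gab;v)$ is a generalised $K_{3,3}$ --- possibly with tree components, cf.\ Remark~\ref{rmkgenK33}. Applying Lemma~\ref{lemgenK33} then forces $N(a)$ and $N(b)$ to meet prescribed parts of the generalised $K_{3,3}$; combining this with the low-degree structure of $\Gab$ (its degree-one vertices come from degree-three vertices of $G$ adjacent to both $a$ and $b$) and the fact that $G$ has only four vertices of degree above three should, in every candidate, either force a vertex of degree at least five --- a contradiction --- or produce a planar $\Gab$.

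This last step is the main obstacle. Unlike the cases $\maxd{G}\geq 6$, no single deletion makes $\Gab$ small, so the argument must run through the whole list of candidate non--planar graphs --- those on ten vertices with at most twelve edges from Remark~\ref{rmknonpl}, together with the reductions obtained by smoothing degree-two vertices (the non--planar graphs on nine vertices and eleven edges, the eight-vertex graphs of Figure~\ref{figG811}, and so on) --- checking in each that the adjacencies forced on $a$ and $b$ clash with the bound $\maxd{G}\leq 5$. Organising these sub-cases so that the generalised-$K_{3,3}$ argument of Lemma~\ref{lemgenK33} applies uniformly, exactly as was done for ten vertices in Lemma~\ref{lem104}, is the delicate part; I expect the bookkeeping, rather than any new idea, to be where the real effort lies.
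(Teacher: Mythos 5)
Your reduction to the five degree sequences, and your handling of $\{7,3^{11}\}$, $\{6,4,3^{10}\}$, $\{5,5,3^{10}\}$, and the sub-case of $\{5,4,4,3^{9}\}$ in which the degree-five vertex misses a degree-four vertex, are correct and essentially the paper's own arguments. But the real content of the proposition lies exactly in the two configurations you leave as a programme: $\{5,4,4,3^{9}\}$ with both degree-four vertices adjacent to the degree-five vertex, and $\{4,4,4,4,3^{8}\}$, above all when the four degree-four vertices span a $K_4$. Unlike $K_5\sqcup K_5$ in Lemma~\ref{lem104}, the $K_4$ configuration is not trivially $2$--apex, so ``first dispose of the exceptional case'' is a step you have not actually performed; and your proposed uniform engine --- apply Lemma~\ref{lemgenK33} and force a vertex of degree at least five --- cannot close these cases by itself. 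When $\maxd{G}=4$ the common neighbours of $a$ and $b$ that the lemma forces may be degree-two vertices of $\Gab$, or may lie in non-singleton parts $W_i$, so one gets at most a vertex of degree four (or no specific vertex at all), which is perfectly consistent with the degree sequence; several of the disconnected candidates of Remark~\ref{rmknonpl} simply yield no contradiction this way.

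What the paper actually does in these cases rests on different ideas, and these are what is missing. For $\{5,4,4,3^{9}\}$ with $a$ of degree five and $b$ an adjacent degree-four vertex, $\|\Gab\|=12$ and $\maxd{\Gab}=3$ (the second degree-four vertex $c$ lies in $\Gab$ and is adjacent to $a$), which cuts the fifteen candidates down to seven; in most of them one finishes not with a degree bound but by observing that $c$ is a degree-three vertex of $\Gab$ adjacent to $a$ and that $G-a,c$ has twelve edges and is connected, hence planar because every non--planar graph on ten vertices and twelve edges in Remark~\ref{rmknonpl} is disconnected; the remaining candidates are settled by exhibiting a $1$--apex structure ($G-v_1$ planar). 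For the $K_4$ case in $\{4,4,4,4,3^{8}\}$, the two undeleted degree-four vertices become an adjacent pair of degree-two vertices of $\Gab$; the paper smooths this pair to an eight-vertex, eleven-edge multigraph of maximum degree three, lists the six possibilities (graphs viii--xi of Figure~\ref{figG811}, $K_{3,3}\sqcup C_2$, and $C_1$ together with the right graph of Figure~\ref{figG710}), and in the main sub-case concludes by deleting the endpoints $x,y$ of the smoothed path --- again an explicit planar deletion, not a degree contradiction. So the gap is not routine bookkeeping along the lines you sketch: it needs the adjacency-plus-connectivity planarity argument, the $1$--apex configurations, and the smoothing of the adjacent degree-two pair, none of which appear in your plan.
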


\begin{proof}
Suppose $|G|=12$ and $\|G \| = 20$. Then $11 \geq \maxd{G} \geq 4$. 
By Lemma~\ref{lemd12}, we can take $\mind{G} \geq 3$ and by Lemma~\ref{lemnonpl}, if $\Gab$ is non--planar, it has at least 11 edges.
So, we may assume $\maxd{G} \leq 6$ as, otherwise, there are vertices
$a$ and $b$ so that $\| \Gab \| < 11$ whence $G$ is $2$--apex.

In fact, we can assume $\maxd{G} \leq 5$. Indeed, suppose instead
$\maxd{G}= 6$ with $a$ a vertex of maximum degree. As there are only
twenty edges in all, there must be a degree three vertex $b$ not adjacent to $a$. Then $\| \Gab \| = 11$. If $G$ is not $2$--apex, then, by Remark~\ref{rmknonpl}, $\Gab = K_{3,3} \sqcup K_2 \sqcup K_2$. However, as $d(b) = 3$,
$\Gab$ can have at most three degree one vertices. The contradiction 
shows that $G$ is $2$--apex when $\maxd{G} = 6$.

Let $\maxd{G} = 5$ and suppose that $G$ has two degree five vertices $a$ and $b$. Assuming $G$ is
not $2$--apex, then $\Gab$ is non--planar. By Remark~\ref{rmknonpl}, $a$ and $b$ are
adjacent and $\Gab = K_{3,3} \sqcup K_2 \sqcup K_2$. It follows that
each of $a$ and $b$ is adjacent to each of the four degree one vertices in
$\Gab$ as these vertices come to have degree three in $G$. In particular,
the induced subgraph on $a$, $b$, and the vertices of the two $K_2$'s is planar. If $v_1$ is a vertex in the $K_{3,3}$ component of $\Gab$, then
$G - v_1$ is planar so that $G$ is $1$--apex and, therefore, also $2$--apex.

So, we can assume $G$ has exactly one degree five vertex $a$. 
It follows that
$G$ has exactly two degree four vertices with the remaining vertices of degree three. We can assume that both degree four vertices are adjacent to
$a$ as otherwise a similar argument to that of the last paragraph shows
that $G$ is $1$--apex. Let $b$ be one of the degree four vertices. Then 
$\| \Gab \|  = 12$. Assuming $G$ is not $2$--apex, then $\Gab$ is non--planar and
therefore one of the 15 graphs described in Remark~\ref{rmknonpl}.
However, as $a$ is adjacent to the two degree four vertices,
we see that $\maxd{\Gab} = 3$ which leaves seven candidate graphs:
the union of $K_2$ with graph viii, ix, x, or xi of Figure~\ref{figG811};
the union of the tree on two edges with the graph to the right
in Figure~\ref{figG710}; or $K_{3,3}$ union a tree on three edges.
(There are two such trees.) We will consider each possibility in turn.

If $\Gab$ is $K_2 \sqcup H$ where $H$ is graph ix, x, or xi of Figure~\ref{figG811}, then we deduce that $a$ is adjacent to one of the degree three
vertices of $H$, call it $v$, as that is the only way to produce a second degree four vertex in $G$ (besides $b$). We claim that $G - a, v$ is planar. Indeed, $\| G - a,v \| = 12$. But $G - a,v$ is connected, so it is not one of the non--planar graphs described in Remark~\ref{rmknonpl}.
As $G - a,v$ is planar, $G$ is $2$--apex.

If $\Gab$ is $K_2 \sqcup H$ where $H$ is graph viii of 
Figure~\ref{figG811}, again, $a$ is adjacent to a degree three vertex of
$H$. If that vertex is one of the six $v_i$ or $w_i$ vertices, the argument proceeds as above. So assume instead $a$ is adjacent to the seventh
degree three vertex. In this case $G - v_1$ is planar so $G$ is $1$--apex, hence $2$--apex.

If $\Gab$ is the union of the right graph of Figure~\ref{figG710}, call it $H$, with a tree $T$ of two edges, we again conclude that if $a$ is adjacent to $v$, a degree three vertex, of $H$ then $G - a,v$ is planar whence $G$ is $2$--apex. The only other way to produce a degree four vertex
for $G$ is if $a$ and $b$ are both adjacent to all three vertices of $T$. However, in this case we find that the subgraph induced by $a$, $b$, and the vertices of the tree is planar so that $G$ is $1$--apex and, therefore, also $2$--apex.

Similar arguments apply when $\Gab$ is the union of $K_{3,3}$ and the tree $P_3$, the path of 
three edges: either $a$ is adjacent to a vertex $v$ of $K_{3,3}$,
which means that $G - a,v$ is planar, or else the graph induced by $a$, $b$
and $P_3$ is planar so that $G$ is, in fact, $1$--apex, hence $2$--apex.
As for $K_{3,3} \sqcup S_3$, where $S_3$ is the star of three edges, again $G - a,v$ is planar where $v$ is the vertex of $K_{3,3}$ adjacent to $a$ if
there is such and otherwise $v$ is an arbitrary vertex of $K_{3,3}$.
This completes the argument when $\maxd{G} = 5$.

Finally, suppose $\maxd{G} = 4$. Then there are four degree four
vertices with the remaining vertices of degree three. If there are 
non--adjacent degree four vertices $a$ and $b$, then $\| \Gab \| = 12$ and the analysis is much as the one just completed in the $\maxd{G} = 5$ case.
That is, we can assume
$\Gab$ is one of the fifteen graphs described in Remark~\ref{rmknonpl} 
with the additional condition that $\maxd{\Gab} \leq 4$.

So, to complete the proof, let's assume the four degree four vertices, call them $a$, $b$, $c$, and $d$, are mutually adjacent.
Then $c$ and $d$ become two adjacent degree
two vertices in  $\Gab$. Smoothing these we
arrive at $G'$ where $|G'| = 8$ and $\| G' \| = 11$. We can assume that 
$G'$ is non--planar (otherwise $\Gab$ is planar and $G$ is $2$--apex) so that it 
is one of the graphs of Figure~\ref{figG811}, $K_{3,3} \sqcup K_2$ with an
edge doubled, or else the union of one of the graphs of Figure~\ref{figG710} 
with $C_1$, a loop on one vertex. In addition, $\maxd{G'} = 3$, which leaves six possibilities: graph viii, ix, x, or xi of Figure~\ref{figG811},
$K_{3,3} \sqcup C_2$, where $C_2$ is the cycle on two vertices, or else
the union of $C_1$ and the graph at the right of Figure~\ref{figG710}.
We'll consider these in turn.

If $G'$ is graph viii, ix, x, or xi of Figure~\ref{figG811}, let $xy$ be 
the edge of $G'$ that contained $c$ and $d$ before smoothing. That is,
$x$ and $y$ are the vertices in $\Gab$ such that 
$xc$, $cd$, and $dy$ is a path. Then $G - x,y$ is planar and $G$ is $2$--apex.

If $G'$ is $K_{3,3} \sqcup C_2$, then $\Gab$ is $K_{3,3} \sqcup C_4$ with
$c$ and $d$ two of the vertices in the $4$--cycle $C_4$. Then $G-a,v_1$ is planar where $v_1$ is a vertex of $K_{3,3}$. Finally, if $G'$ is
the union of $C_1$ and the right graph of Figure~\ref{figG710}, call it $R$, then $\Gab$ is $C_3 \sqcup R$ where $c$ and $d$ are two of the vertices
in the $3$--cycle $C_3$. It follows that $G - v_1$ is planar so that 
$G$ is $1$--apex, hence $2$--apex.

This completes the case where $\maxd{G} = 4$, and with it the proof for 
$\|G \| = 20$. As usual, since all graphs with $\| G \| = 20$ are $2$--apex,
the same is true for graphs with $\|G \| \leq 20$.
\end{proof}

\subsection{Thirteen or more vertices}

In this subsection, we complete the proof of Theorem~\ref{thmain} by examining graphs with 13 or more vertices.

\begin{prop} A graph $G$ with $|G| \geq 13$ and $\|G \| \leq 20$ is $2$--apex.
\end{prop}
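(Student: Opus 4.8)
The plan is to handle $|G| \ge 14$ by a short reduction and then to grind through the single delicate case $|G| = 13$. We argue by induction on $|G|$. If $|G| \ge 14$ and $\|G\| \le 20$, then $2\|G\| \le 40 < 3|G|$, so $G$ has a vertex of degree at most two; Lemma~\ref{lemd12}, applied with $n = |G| - 1$ and the inductive hypothesis for $|G| - 1 \ge 13$ vertices, shows $G$ is $2$--apex. It therefore suffices to treat $|G| = 13$, which will be the base case. If $\mind{G} \le 2$, then Lemma~\ref{lemd12} together with Theorem~\ref{thmain} for graphs on twelve vertices (established in the previous subsections) finishes the argument. So assume $\mind{G} \ge 3$; then $2\|G\| = \sum_v \vdeg{v} \ge 39$, which forces $\|G\| = 20$ and the degree sequence $\{4,3,3,\dots,3\}$, with a unique vertex $a$ of degree four. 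This single configuration is the heart of the matter.

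For it I would argue by contradiction: suppose $G$ is not $2$--apex. Among the eight degree-three vertices not adjacent to $a$, fix one, $b$. Then $\Gab$ has eleven vertices, exactly $20 - 4 - 3 = 13$ edges, is non-planar, and satisfies $\mind{\Gab} \ge 1$ (each surviving vertex had degree at least three and lost at most two edges). Now I would invoke Remark~\ref{rmknonpl}, sharpened by the bookkeeping $\chi(\Gab) = -2$: either $\Gab = K_5 \sqcup K_2 \sqcup K_2 \sqcup K_2$, or else $\Gab$ is the disjoint union of a subdivision $R$ of $K_{3,3}$, a tree $T$ on at least two vertices, and possibly a triangle, with the proviso that if the triangle appears then $R = K_{3,3}$ and $T = K_2$. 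The first alternative is eliminated at once: each of the six degree-one vertices of $\Gab$ would have to be adjacent to both $a$ and $b$ to reach degree three in $G$, which is impossible since $\vdeg{b} = 3$. Pinning down exactly this list of possibilities for $\Gab$ from the remark is the step I expect to be the most delicate; it is pure Euler-characteristic combinatorics but requires care about how the single tree component and the ``rest having a $K_{3,3}$ minor'' interact (in particular, that the non-tree part has $\chi = -3$, hence is a subdivision of $K_{3,3}$ plus at most an isolated triangle).

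In the remaining case the conclusion is forced by the smallness of $\vdeg{a}$ and $\vdeg{b}$. Taking a branch vertex $v_1$ of $R$, the pair $(R; v_1)$ is a generalised $K_{3,3}$; since the proof of Lemma~\ref{lemgenK33} (cf.\ Remark~\ref{rmkgenK33}) goes through verbatim when the extra components of $\Gab$ are merely planar, $N(a)$ and $N(b)$ each meet every part $W_1, W_2, W_3$. If some $W_i$ were a single vertex $w_i$ (a branch vertex, of degree three in $\Gab$), it would be adjacent to both $a$ and $b$, hence have degree at least five in $G$, contradicting $\maxd{G} = 4$. So each $|W_i| \ge 2$; as all subdivision vertices of $R$ may be taken to lie in $W_1 \cup W_2 \cup W_3$, this forces $R$ to have at least three subdivision vertices, so $|R| \ge 9$. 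Counting against the eleven vertices of $\Gab$, with $T$ using at least two, we get $|R| = 9$, no triangle, $T = K_2$, and $|W_i| = 2$ for each $i$. But then each of the two degree-one vertices of $T = K_2$ must be adjacent to both $a$ and $b$, whereas $N(b)$, having exactly three elements, one in each $W_i$, lies inside $W_1 \cup W_2 \cup W_3 \subseteq V(R)$, which is disjoint from $V(T)$. This contradiction shows $G$ is $2$--apex and completes the induction.
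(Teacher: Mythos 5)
Your overall architecture (reduce $|G|\ge 14$ and $\mind{G}\le 2$ via Lemma~\ref{lemd12}, then attack the single degree sequence $\{4,3,\dots,3\}$ on 13 vertices) matches the paper, and your endgame is a legitimately different route: the paper enumerates the possible graphs $\Gab$ explicitly (splitting on the size $2\le |T|\le 5$ of the tree component and consulting its catalogue of small non--planar graphs) and exhibits a planar $G-a,v_1$ in each case, whereas you argue uniformly with Lemma~\ref{lemgenK33}. However, the step you yourself flagged as delicate contains a genuine error: it is not true that the non--tree part of $\Gab$ must be exactly a subdivision of $K_{3,3}$ (plus possibly a separate triangle). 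The inference ``$\chi=-3$, hence a subdivision of $K_{3,3}$'' fails: a connected graph of Euler characteristic $-3$ containing a $K_{3,3}$--subdivision is a subdivision of $K_{3,3}$ with pendant trees attached (equal $\chi$ only forces the excess to be a pendant forest, not to vanish), and such attachments are compatible with $\maxd{\Gab}\le 3$ when they hang from subdivision vertices. Concretely, take $K_{3,3}$, subdivide one edge, attach a two--vertex path at the subdivision vertex, and add a disjoint $K_2$: this has 11 vertices, 13 edges, minimum degree 1, maximum degree 3, and is non--planar, yet its non--planar component is not a subdivision of $K_{3,3}$, so it is missing from your case list. As written, your case analysis is therefore incomplete.

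The good news is that the gap is local and repairable, because your counting argument never really uses that $R$ is a bare subdivision. The generalised $K_{3,3}$ is defined via topological equivalence, so any pendant trees are simply absorbed into the sets $V_i$, $W_i$ of the partition; the conclusion that each $|W_i|\ge 2$ (no vertex of $\Gab$ can be adjacent to both $a$ and $b$, since all vertices of $\Gab$ have degree exactly three in $G$) still forces the non--planar component to have at least $1+1+1+2+2+2=9$ vertices, the vertex budget still pins down the configuration (nine--vertex non--planar component, $T=K_2$, no triangle), and the final contradiction --- both vertices of $T$ need $b$ as a neighbour while $N(b)$ is exhausted by $W_1\cup W_2\cup W_3$ --- is unchanged. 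So you should restate the classification as: one tree component together with a connected graph of Euler characteristic $-3$ containing a $K_{3,3}$--subdivision (a subdivision of $K_{3,3}$ with pendant trees), plus possibly a triangle under the proviso you give; with that correction, and with your (correct) observation that Lemma~\ref{lemgenK33} extends to extra planar components attached only through $a$, your argument goes through and is arguably slicker than the paper's case--by--case enumeration.
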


\begin{proof}
Suppose $|G| = 13$ and $\|G \| = 20$. By Lemma~\ref{lemd12}, we can assume $\mind{G} \geq 3$ so that $G$ has a single vertex $a$ of degree
four with all other vertices of degree three. Let $b$ be a vertex that
is not adjacent to $a$ so that $\| \Gab \| = 13$. 
Assume $G$ is not $2$--apex.
Then $\Gab$ is non--planar. Now, $\maxd{\Gab} = 3$, so $\Gab$ has no $K_5$ component. By Remark~\ref{rmknonpl}, $\Gab$ has exactly one tree component $T$, with the rest of the graph $G' = \Gab \setminus T$
having a $K_{3,3}$ minor. As $\mind{\Gab} \geq 1$, there are no isolated degree zero vertices, so $2 \leq |T| \leq 5$ and we have four cases. 

If $|T| = 2$, then $T$ is $K_2$ and $G' = \Gab \setminus T$ is a non-planar graph on nine vertices with 12 edges. As $\maxd{G'} = 3$ and $\mind{G'}  \geq 1$, $G'$ has a vertex of degree two. By smoothing that vertex,
we have either  a multigraph obtained by doubling an edge of the graph $K_{3,3} \sqcup K_2$ or else one of the graphs of Figure~\ref{figG811}.
Moreover, as $\maxd{G'} = 3$, of the graphs in the figure, only viii, ix, x, and xi are possibilities. 

Suppose then that, after smoothing and simplifying, 
$G'$ is $K_{3,3} \sqcup K_2$. Then, as
$\maxd{G'} = 3$, the doubled edge is that of the $K_2$ and
$G' = K_{3,3} \sqcup C_3$, where $C_3$ denotes the cycle on three vertices. Thus, $\Gab = K_{3,3} \sqcup C_3 \sqcup K_2$. Let $c$ be one of the
vertices in the $K_{3,3}$ component. Then $G - a,c$ is planar and $G$ is $2$--apex.

If, after smoothing a degree two vertex, $G'$ becomes graph viii, ix, x, or xi of Figure~\ref{figG811}, then $G - a, v_1$ is planar 
%Idea for viii: b is adjacent to the 
%three degree one vertices. It doesn't matter where the smoothed degree two
%vertex is as it must be adjacent to a. Idea for the other graphs: 
%b is adjacent to exactly one of the degree two vertices of G'
and $G$ is $2$--apex.

Next suppose $|T| = 3$. As, $|G'| = 8$, $\|G'\| = 11$, and $\maxd{G'} = 3$, we conclude that $G'$ is graph viii, ix, x, or xi
of Figure~\ref{figG811}. Whichever it is, $G - a, v_1$ will be a planar
subgraph of $G$ so that $G$ is $2$--apex. %Idea, b is adjacent to the two leaves of T and at most one other vertex. Whichever the third vertex is, we can make G - a,v_1 planar so that $G$ is $2$--apex.

Similarly, if $|T| = 4$, 
then $|G'| = 7$, $\|G'\| = 10$. As $\maxd{G'} = 3$, we conclude that $G'$ is the graph to 
the right of Figure~\ref{figG710}. Then $G - a,v_1$ is planar and 
$G$ is $2$--apex.  

Finally, if $|T|=5$, then
$|G'| = 6$ and $\|G'\| = 9$ so that $G'$ is $K_{3,3}$. 
Again, $G - a,v_1$ is planar and $G$ is $2$--apex.  

We have shown that a graph with $|G| = 13$ and $\|G\| = 20$ is $2$--apex. It follows that the same is true for graphs having $|G| = 13$ and $\|G \| \leq 20$.

Now, suppose $|G| \geq 14$ and $\|G\| = 20$.
If $\mind{G} \geq 3$, then the degree sum is at least $3 \times 14= 42> 40$, a contradiction. So, we may assume $\mind{G} < 3$ which implies
$G$ is $2$--apex by Lemma~\ref{lemd12}.
It follows that any graph of 14 or more vertices with fewer than 20 edges is also $2$--apex.
\end{proof}

\section{Graphs on twenty-one edges}

In this section we prove Propositions~\ref{prop821} (in the first subsection) and Propositions~\ref{prop921$2$--apex}
and \ref{prop921IK} (in the second subsection).

\subsection{Eight or fewer vertices}
In this subsection we prove Proposition~\ref{prop821}, a non-IK graph
of eight or fewer vertices is $2$--apex.  This implies that for these graphs $2$--apex is equivalent to not IK and the classification of $2$--apex graphs on eight or fewer vertices follows from the IK classification due to \cite{BBFFHL} and \cite{CMOPRW}. 

\setcounter{section}{1}
\setcounter{theorem}{3}

\begin{prop} 
%\label{prop821}%
Every non IK graph on eight or fewer vertices is $2$--apex.
\end{prop}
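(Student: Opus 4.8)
The plan is to reduce to a finite check using the machinery already developed. First I would invoke Theorem~\ref{thmain}: any graph with $\|G\| \leq 20$ is $2$--apex, so we may assume $\|G\| \geq 21$. Combined with $|G| \leq 8$ and Lemma~\ref{lemd12} (which lets us assume $\mind{G} \geq 3$), the degree bounds pin us down: a graph on at most eight vertices that is not $2$--apex must have at least $21$ edges, and on exactly eight vertices the degree sum is at least $42$, forcing between $21$ and $28$ edges. So the statement reduces to checking that every non-$2$--apex graph on eight vertices with $21 \leq \|G\| \leq 28$ edges is in fact IK.

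Next I would appeal to the known classifications. The IK graphs on eight or fewer vertices were determined in \cite{BBFFHL} and \cite{CMOPRW}; the non-$2$--apex graphs on eight vertices can be enumerated directly (or are by now tabulated), and the key point to verify is that these two lists coincide on eight vertices. Concretely, one expects that the only non-$2$--apex, non-IK candidates would be graphs built analogously to $E_9$ — and the whole thrust of the paper is that $E_9$ is the \emph{simplest} such example, on nine vertices. So the argument is: list the non-$2$--apex graphs on $\leq 8$ vertices (a short list, since each must contain a ``large'' non-planar structure surviving the deletion of any two vertices), and check each one against the IK classification, finding that every one of them is IK. For graphs on fewer than eight vertices one does not even need the classification: such a graph is a subgraph of $K_7$, and deleting two vertices of appropriately chosen degree leaves a subgraph of $K_5$, which is planar, so these are all $2$--apex and the statement is vacuous there.

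The main obstacle is the finite enumeration of non-$2$--apex graphs on eight vertices: one must argue that every such graph, upon deletion of any pair of vertices, leaves a non-planar graph, and then use Lemma~\ref{lemnonpl} together with Remark~\ref{rmknonpl} (the explicit lists of small non-planar graphs) and Lemma~\ref{lemgenK33} to constrain the neighbourhoods, much as in the proof of Proposition~\ref{prop820}. In fact the bookkeeping closely parallels that proof: one bounds $\maxd{G}$, uses the pigeonhole principle to find a pair $a,b$ with $\|\Gab\|$ small, and then shows that either $\Gab$ is planar (contradiction) or $G$ contains one of the known minor-minimal IK graphs (for eight vertices, essentially $K_7$ with a triangle--Y move applied, i.e. the relevant descendants of $K_7$). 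Where the earlier proposition derived a \emph{planar} $\Gab$ and hence $2$--apexness, here the alternative — when no such planar deletion exists — must be identified as one of the finitely many IK graphs on eight vertices, completing the dichotomy.
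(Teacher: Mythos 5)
Your reduction to $\|G\| \geq 21$ via Theorem~\ref{thmain} matches the paper, and you cite the right classification (\cite{BBFFHL}, \cite{CMOPRW}), but there is a genuine gap in the main case $|G| = 8$: the step that does all the work is never carried out. You propose to enumerate the non-$2$--apex graphs on eight vertices with $21 \leq \|G\| \leq 28$ edges ``much as in the proof of Proposition~\ref{prop820}'' and then compare that list with the IK classification. But Proposition~\ref{prop820} and the supporting Lemma~\ref{lemnonpl} and Remark~\ref{rmknonpl} are calibrated to $20$ edges (they catalogue the non--planar candidates for $\Gab$ with $9$--$12$ edges); with up to $28$ edges the pigeonhole bound on $\|\Gab\|$ grows and the casework you gesture at is much larger and is not done, nor is its outcome verified against the IK list. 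The paper avoids this entirely by running the classification in the opposite direction: by \cite{BBFFHL} and \cite{CMOPRW}, every non-IK graph on eight vertices with at least $21$ edges is a subgraph of one of two specific $25$--edge graphs $G_1$ and $G_2$; each of these has two vertices of degree seven whose deletion leaves a planar subgraph of $K_6$, so $G_1$, $G_2$, and hence all their subgraphs, are $2$--apex. That single observation replaces your proposed enumeration; without it (or an actually executed enumeration) your argument is a plan rather than a proof.

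A smaller error: for $|G| \leq 7$ and $\|G\| \geq 21$ the only graph is $K_7$, and it is \emph{not} $2$--apex, since deleting any two vertices leaves $K_5$. Your claim that graphs on fewer than eight vertices are $2$--apex because ``a subgraph of $K_5$'' remains is the argument of Proposition~\ref{prop820}, valid only for proper subgraphs of $K_7$, i.e.\ when $\|G\| \leq 20$. In the present range the statement holds for $K_7$ because $K_7$ is IK (so it is not a counterexample), not because it is $2$--apex; the paper disposes of this case in exactly that way.
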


\setcounter{section}{3}
\setcounter{theorem}{0}

\begin{proof}
By Proposition~\ref{prop820}, the theorem holds if $\|G\| \leq  20$, so we may assume that $\|G\| \geq 21$. 
The only graph with 21 edges and fewer than eight vertices is $K_7$, which is IK. So we may assume $|G| = 8$.

\begin{figure}[ht]
\begin{center}
\includegraphics[scale=0.5]{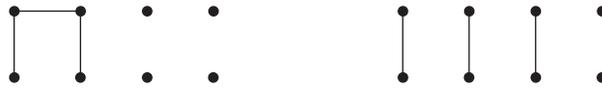}
\caption{Complements of the non IK graphs $G_1$ and $G_2$.\label{fig8G1G2}}
\end{center}
\end{figure}

Knotting of graphs on 
eight vertices was classified independently by~\cite{CMOPRW} and \cite{BBFFHL}. Using the classification, the non IK graphs with 21 or more edges
are all subgraphs of two graphs on 25 edges, $G_1$ and $G_2$, whose 
complements appear in Figure~\ref{fig8G1G2}. Each of these two graphs has at least two vertices of degree seven and, for both graphs, deleting two such vertices leaves a planar subgraph of $K_6$. Thus, both $G_1$ and $G_2$ are $2$--apex and the same is true of any subgraph of $G_1$ and $G_2$.
\end{proof}

\subsection{Nine vertices}
In this subsection we prove Propositions~\ref{prop921$2$--apex} and \ref{prop921IK}, which classify the graphs of nine vertices and at most 21 
edges with respect to $2$--apex and IK. 

We begin with Proposition~\ref{prop921$2$--apex}: among these graphs,
all but $E_9$ (see Figure~\ref{figE9}) and 
four graphs derived from $K_7$ by triangle--Y moves
($K_7 \sqcup K_1 \sqcup K_1$, $H_8 \sqcup K_1$, $F_9$, and $H_9$,
see~\cite{KS}) are $2$--apex. We first present four lemmas that
show this is the case when there is a subgraph $\Gab$ of the form shown
in Figure~\ref{figG711}. The first lemma shows that we can assume $\mind{\Gab} \geq 2$. The next three treat the five graphs (iii, iv, v, vi and viii) of Figure~\ref{figG711} that meet this condition.

\begin{lemma} \label{lem9.1}%
Let $G$ be a graph with $|G| = 9$, $\|G \| = 21$ and $\mind{G} \geq 3$. Suppose that, 
for each pair of vertices $a'$ and $b'$, $\| G - a',b' \| \geq 11$ with 
equality for at least one pair $a$, $b$. Then, $a$ and $b$ can be chosen so that one of the following two holds.
\begin{itemize}
\item The vertices $a$ and $b$ have degrees six and five, respectively, $G$ has one of the following degree sequences: $\{6,5,5,5,5,5,5,3,3\}$, 
$\{6,5,5,5,5,5,4,4,3\}$, or $\{6,5,5,5,5,4,4,4,4\}$, and $a$ is 
adjacent to each degree five vertex (including $b$).
\item The vertices $a$ and $b$ both have degree five, $G$ has one
of the following degree sequences:  $\{5,5,5,5,5,5,5,4,3\}$ or $\{5,5,5,5,5,5,4,4,4\}$, and $a$ and $b$ are not neighbours.
\end{itemize}
Moreover, $a$ and $b$ can be chosen so that $\mind{\Gab} \geq 2$.
\end{lemma}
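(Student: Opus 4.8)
The plan is to proceed in two stages: first identify the degree-sequence constraints, then handle the minimum-degree condition on $\Gab$. For the first stage, let $a$ be a vertex of maximum degree. Since $\|G\| = 21$ and $|G| = 9$, the degree sum is $42$, so $\maxd{G} \geq 5$; and the hypothesis $\|G - a',b'\| \geq 11$ forces $\maxd{G} \leq 6$, since a vertex of degree $\geq 7$ together with any other vertex would delete at least $12$ edges (one must check the overlap, but two vertices of total degree $\geq 10$ sharing at most a few edges still leaves $\leq 10$). So $\maxd{G} \in \{5,6\}$. I would then case on this. If $\maxd{G} = 6$ with $a$ the degree-six vertex, the condition $\|\Gab\| \geq 11$ for every $b$ means each $b$ has degree plus (shared edges) at most $10$, and combined with $\mind{G} \geq 3$ this pins down that all remaining vertices have degree $\leq 5$, that $a$ is adjacent to every degree-five vertex, and that the degree sequence is one of the three listed (a short counting argument on how many degree-five, degree-four, degree-three vertices can coexist given the edge count $21$ and the adjacency restriction). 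If $\maxd{G} = 5$, then picking $a$, $b$ a non-adjacent pair of degree-five vertices (which exist because otherwise all degree-five vertices are mutually adjacent and one checks $\|\Gab\|$ is too large, or there are too few) gives $\|\Gab\| = 21 - 10 = 11$, and the remaining degree sequence must be $\{5,5,5,5,5,5,5,4,3\}$ or $\{5,5,5,5,5,5,4,4,4\}$ by the same edge-counting.

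For the second stage I would show that among the pairs $a,b$ realizing the above, one can be chosen with $\mind{\Gab} \geq 2$. The obstruction is a vertex $v$ of degree $\leq 1$ in $\Gab$, i.e.\ a vertex of $G$ all but at most one of whose neighbours lies in $\{a,b\}$. Since $\mind{G}\geq 3$, such a $v$ has at least two neighbours in $\{a,b\}$, hence $v \in N(a) \cap N(b)$ and $v$ has degree exactly three in $G$ with its third neighbour possibly outside (degree one in $\Gab$) or in (degree zero in $\Gab$). The idea is that if the chosen pair produces such a low-degree vertex, I can re-select: in the degree-six case, $a$ is forced and I vary $b$ over the degree-five vertices, showing that for a suitable choice $b$ shares no degree-three "near-pendant" neighbour with $a$ beyond what is allowed; in the degree-five case I have more freedom to swap $a$ and $b$ among the degree-five vertices. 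The counting here is that the number of degree-three vertices is small ($\leq 3$ in every listed sequence except $\{6,5,5,5,5,5,5,3,3\}$ which has only two), and each such bad vertex $v$ imposes that both $a$ and $b$ lie in $N(v)$, so only pairs drawn from $\bigcup_v N(v)$ (a set of bounded size) are bad; since there are many degree-five vertices to choose from, a good pair survives.

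The main obstacle I expect is the bookkeeping in the second stage: ruling out the possibility that \emph{every} admissible choice of $\{a,b\}$ meeting the degree-sequence conclusion simultaneously fails to give $\mind{\Gab}\geq 2$. One has to argue that the degree-three vertices cannot be arranged so that all of them are adjacent to both members of every candidate pair at once. I would handle this by supposing the contrary and deriving that the degree-three vertices, together with $a$ and $b$, form a rigid configuration (each degree-three vertex having two of its three edges going to $\{a,b\}$), then counting the edges incident to $\{a,b\}$ and to the degree-three vertices against the total of $21$ to reach a contradiction or to exhibit an alternative pair. A secondary, more routine obstacle is simply enumerating which degree sequences survive the inequalities $\mind{G}\geq 3$, $\maxd{G}\leq 6$, $\sum d(v) = 42$, $|G|=9$, together with the adjacency constraint "$a$ adjacent to all degree-five vertices" in the $\maxd{G}=6$ case; this is a finite check that yields exactly the five sequences claimed, so I would state it and omit the arithmetic.
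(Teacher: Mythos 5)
Your route is the same as the paper's: force $\maxd{G}\in\{5,6\}$, enumerate degree sequences from the degree sum $42$, observe that every degree-five vertex must be adjacent to the degree-six vertex, find a non-adjacent degree-five pair when $\maxd{G}=5$, and then note that the only obstruction to $\mind{\Gab}\geq 2$ is a degree-three vertex adjacent to both $a$ and $b$, to be avoided by re-choosing the pair. However, two of your stated justifications do not hold as written. First, your reason for $\maxd{G}\leq 6$ is numerically off: a degree-seven vertex together with a non-adjacent degree-three vertex deletes only $10$ edges, leaving $11$, which is perfectly consistent with the hypothesis, and ``total degree $\geq 10$ \dots leaves $\leq 10$'' is false ($21-10+1=12$). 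What actually forces $\maxd{G}\leq 6$ is that if $d(a)\geq 7$ the other eight vertices carry degree sum at least $34$, so some $b$ has $d(b)\geq 5$, and then $\|\Gab\|\leq 21-7-5+1=10<11$. Similarly, in the $\maxd{G}=5$ case the existence of a non-adjacent degree-five pair is not a matter of ``$\|\Gab\|$ too large'': with six mutually adjacent degree-five vertices they would form a $K_6$ component with $15$ edges, leaving at most $3$ edges on the remaining three vertices, so $\|G\|\leq 18<21$.

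Second, in your final stage the counting heuristic ``bad pairs lie in $\bigcup_v N(v)$, and there are many degree-five vertices'' is not sufficient, because the chosen pair must \emph{simultaneously} be non-adjacent (this is what gives $\|\Gab\|=11$ in the second bullet) and fail to be jointly adjacent to the degree-three vertex. The one sequence where this genuinely bites is $\{5,5,5,5,5,5,5,4,3\}$: you must exhibit two non-adjacent degree-five vertices not both in $N(v)$, where $v$ is the degree-three vertex. The paper's resolution is short but it is the essential missing step: if no such pair existed, the (at least four) degree-five vertices outside $N(v)$ would have to be mutually adjacent and each adjacent to every degree-five vertex inside $N(v)$, forcing some vertex to have degree at least six, a contradiction. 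Your proposed ``rigid configuration plus edge count'' would have to reduce to exactly this adjacency-forcing argument; as sketched it does not yet do so. The other sequences are handled as you indicate: $\{6,5,5,5,5,4,4,4,4\}$ and $\{5,5,5,5,5,5,4,4,4\}$ have $\mind{G}\geq 4$; in $\{6,5,5,5,5,5,5,3,3\}$ the degree-six vertex is adjacent to no degree-three vertex; and in $\{6,5,5,5,5,5,4,4,3\}$ at least two of the five degree-five vertices avoid $N(v)$, and any of them serves as $b$.
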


\begin{proof}
We can assume $\maxd{G} = d(a) \geq d(b)$. Since $\| G - a',b' \| \geq 11$ 
for every pair of vertices $a'$, $b'$, we must have $d(a) = 6$ or $d(a) = 5$. 

If $d(a) = 6$, the condition $\| G - a', b' \| \geq 11$ implies
that there is exactly one degree six vertex, $a$, and every degree five vertex is adjacent to $a$. As $\| G \| = 21$, the degree sum is 42 and, therefore, there are only three possibilities for the degree sequence. In particular, there is always a vertex of degree five $b$ which is adjacent
to $a$ so that $\| \Gab \| = 11$. 

Similarly, if $d(a) = 5$, then the 
condition $\| G \| = 21$ leaves two possible degree sequences. There must be two degree five vertices $a$ and $b$ that are not adjacent
so that $\| \Gab \| = 11$. This is clear for the degree sequence with seven degree five vertices. In the
case of six degree five vertices, if they were all mutually adjacent, they 
would constitute a $K_6$ component of 15 edges. The other component has
three vertices and, at most, three edges. In total, $G$  would have
at most 18 edges, a contradiction.

Finally, we argue that it is always possible to choose $a$ and $b$ so that $\mind{\Gab} \geq 2$. Indeed, this is obvious
when $\mind{G} \geq 4$ as deleting $a$ and $b$ can reduce the degree of the other vertices by at most two. For the sequence $\{6,5,5,5,5,5,5,3,3\}$, the degree six vertex $a$ is adjacent to each degree five vertex and is, therefore, not adjacent to either of the degree three vertices. Hence in 
$\Gab$ these degree three vertices have degree at least two and $\mind{\Gab} \geq 2$. For $\{6,5,5,5,5,5,4,4,3\}$, the degree three vertex is
adjacent to at most three of the degree five vertices. By choosing $b$ as one of the other degree five vertices, we will have $\mind{\Gab} \geq 2$.
Similarly, for $\{5,5,5,5,5,5,5,4,3\}$, the degree three vertex is adjacent to at most three of the degree five vertices, call them $v_1$ $v_2$, and $v_3$. We can find degree five vertices $a$ and $b$ that are not adjacent and not both neighbours of the degree three vertex (so that $\mind{\Gab} \geq 2$). For,
if not, then the remaining four degree five vertices, $v_4$, $v_5$, $v_6$, and $v_7$ are all mutually adjacent and also all adjacent to $v_1$, $v_2$, and $v_3$. But this is not possible, e.g., $d(v_4) = 5$, so it
cannot have all of the other degree five vertices as neighbours.
\end{proof}

\begin{lemma} \label{lem9iii}%
Let $G$ be a graph with $|G| = 9$, $\|G \| = 21$, and $\mind{G} \geq 3$. Suppose that there
are vertices $a$ and $b$ such that $\Gab$ is graph iii of Figure~\ref{figG711} and for any pair of vertices $a'$ and $b'$, 
$\| G-a',b' \| \geq 11$. If $G$ is not $2$--apex, then $G$ is $H_9$.
\end{lemma}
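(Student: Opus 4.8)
The plan is to suppose $G$ is not $2$--apex — equivalently, that $G-a',b'$ is non--planar for every pair $a',b'$ — and to deduce $G\cong H_9$. Since graph iii of Figure~\ref{figG711} has eleven edges, the pair $a,b$ realises the minimum in the standing hypothesis $\|G-a',b'\|\geq 11$, so the hypotheses of Lemma~\ref{lem9.1} are met: the degree sequence of $G$ is one of the five listed there, and, after relabelling if necessary, $(d(a),d(b))$ is either $(6,5)$, with $a$ adjacent to every degree five vertex (including $b$), or $(5,5)$, with $a$ and $b$ non--adjacent. In particular the number of edges joining $\{a,b\}$ to the seven vertices of $\Gab$ is $d(a)+d(b)-2[ab\in E(G)]$, which is $9$ in the first case and $10$ in the second.

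Next I would use the fact that graph iii is a generalised $K_{3,3}$: picking the vertex $c$ of $\Gab$ for which $(\Gab;c)$ is a generalised $K_{3,3}$, one reads the partition $V_2,V_3,W_1,W_2,W_3$ off the figure, and Lemma~\ref{lemgenK33} forces each of $N(a)$ and $N(b)$ to meet each of $W_1$, $W_2$, and $W_3$. Feeding this into the bookkeeping identity $d_G(v)=d_{\Gab}(v)+|N(v)\cap\{a,b\}|$, in which the left-hand side is pinned to the value prescribed by the degree sequence of $G$, determines, up to the symmetries of graph iii, how many edges from $a$ and from $b$ land on each vertex of $\Gab$. This leaves only a short list of candidate graphs $G$.

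The final step is to work through that list. For each candidate other than $H_9$ I would exhibit a pair $a',b'$ with $G-a',b'$ planar — typically $a'\in\{a,b\}$ and $b'$ a vertex of $\Gab$ of large degree — verifying planarity directly from the generalised $K_{3,3}$ picture, in the spirit of the proof of Lemma~\ref{lemgenK33}, where adjacency of $a$ to all of $W_1,W_2,W_3$ lets one contract $\Gab$ to $K_{3,3}-v_1$ and the deleted vertex then absorbs the remaining obstruction. Each such pair contradicts the assumption that $G$ is not $2$--apex, so the single surviving candidate must be $G$, and one checks directly that it is isomorphic to $H_9$. I expect the main obstacle to be precisely this last enumeration — listing the spoke patterns compatible with both Lemma~\ref{lemgenK33} and the degree sequence, and confirming planarity of the witness subgraph in every discarded case; the earlier steps are essentially forced.
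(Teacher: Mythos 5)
Your proposal is correct and follows essentially the same route as the paper: invoke Lemma~\ref{lem9.1} to pin down the degrees of $a$ and $b$, use Lemma~\ref{lemgenK33} on the generalised $K_{3,3}$ structure of graph iii to force $\{w_1,w_2,w_3\}\subseteq N(a)\cap N(b)$, and then eliminate every attachment pattern other than the one giving $H_9$ by exhibiting planar two-vertex deletions. The only difference is that the paper executes your final ``enumeration'' step surgically rather than by brute force: the degree-$(5,5)$ case is killed at once by the planar witness $G-u,w_1$, and in the $(6,5)$ case the assumed non--planarity of $G-a,w_1$ and of $G-w_1,w_2$ directly forces $N(b)=\{a,u,w_1,w_2,w_3\}$ and $v_1,v_2\in N(a)$, which is exactly $H_9$.
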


\begin{proof}
By Lemma~\ref{lem9.1}, $d(a) = 6$ or $5$. Also, since $u$ has degree three or more in $G$, at least one of $a$ and $b$ is adjacent to $u$.

Assuming $G$ is not $2$--apex, by Lemma~\ref{lemgenK33},
$\{w_1, w_2, w_3\} \subset N(a) \cap N(b)$. 
Then $G - u,w_1$ is planar (and $G$ is $2$--apex) in the case 
$d(a) = 5$. 

So, we can assume $d(a) = 6$ and we are in the first case of 
Lemma~\ref{lem9.1}. As above $\{w_1, w_2, w_3 \} \subset 
N(a) \cap N(b)$. Then, since $G - a, w_1$ is non--planar, we deduce
that $N(b) = \{a, w_1, w_2, w_3, u \}$. Finally, since $G - w_1,w_2$ is non--planar, $v_1$ and $v_2$ are also neighbours of $a$, i.e., 
$N(a) = \{b,v_1,v_2,w_1,w_2,w_3 \}$. But these choices of $N(b)$ and $N(a)$ result in the graph $H_9$. So, if $G$ is not $2$--apex, it is $H_9$.
\end{proof}

\begin{lemma} \label{lem9ivtovi}%
Let $G$ be a graph with $|G| = 9$, $\|G \| = 21$, and $\mind{G} \geq 3$. 
Suppose that there
are vertices $a$ and $b$ such that $\Gab$ is graph iv, v, or vi of Figure~\ref{figG711} and for any pair of vertices $a'$ and $b'$, 
$\| G-a',b' \| \geq 11$. Then $G$ is $2$--apex.
\end{lemma}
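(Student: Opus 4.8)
The plan is to follow the same contradiction strategy used in Lemma~\ref{lem9iii}: assume $G$ is not $2$--apex, so that $\Gab$ is non--planar and, moreover, $\|G-a',b'\|\geq 11$ for every pair $a',b'$. By Lemma~\ref{lem9.1} we may assume $d(a)=6$ or $d(a)=5$, that $a$ is adjacent to every degree five vertex when $d(a)=6$, that $a,b$ are non--adjacent when $d(a)=d(b)=5$, and (most importantly) that $\mind{\Gab}\geq 2$. The last point is what lets us identify $\Gab$ precisely as one of graphs iv, v, vi of Figure~\ref{figG711}, since these are exactly the ones with $\mind{}\geq 2$ among iii--vi and viii that are not yet covered.

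The core of the argument is to run Lemma~\ref{lemgenK33} on a well--chosen generalised $K_{3,3}$ structure inside each of graphs iv, v, vi. For each of these graphs I would locate a vertex $v$ (a ``$v_i$'' or a low--degree vertex) so that $(\Gab;v)$ is a generalised $K_{3,3}$, and note that in the resulting partition the sets $W_1,W_2,W_3$ are forced to be small --- in graphs iv, v, vi at least two of the three $W_i$'s (and similarly on the $V$--side, after relabeling) consist of a single original vertex. Lemma~\ref{lemgenK33} then forces $N(a)$ and $N(b)$ each to contain a vertex from each $W_i$; when a $W_i$ is a singleton $\{w\}$ this makes $w$ adjacent to both $a$ and $b$, so $d(w)=5$ in $G$. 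If two such singletons $w,w'$ exist and $ww'\notin E(G)$, then $\|G-w,w'\|=21-10=11$ is the extremal value, but $\Gab$ has $\mind{}\geq 2$ coming from the smaller degrees $\leq 5$, and pushing the bookkeeping (as in the ten-- and eleven--vertex proofs) we either get a vertex of degree $>5$, contradicting $\maxd{G}\leq 6$ and $d(a)=6$ being unique, or we force enough edges at $a$ and $b$ that a direct deletion $G-a,x$ or $G-x,y$ becomes a planar subgraph, contradicting ``not $2$--apex''. Either way $G$ is $2$--apex.

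More concretely, I expect the proof to split by the two cases of Lemma~\ref{lem9.1}. In the $d(a)=5$ case, the degree bound $\maxd{G}=5$ makes the singleton--$W_i$ argument immediately fatal: forcing $d(w)=5$ for a vertex $w$ that is still adjacent (in $\Gab$) to both $v$--neighbours gives $d(w)\geq 6$ unless $w$'s $\Gab$--degree was already cut, and tracking which vertices of graphs iv, v, vi have degree $2$ pins everything down. In the $d(a)=6$ case one has the extra freedom that $a$ can absorb degree, so here I would, as in Lemma~\ref{lem9iii}, use successive non--planarity of $G-a,w_i$ and $G-w_i,w_j$ to read off $N(a)$ and $N(b)$ completely, then observe that the resulting graph either is planar after deleting two vertices outright or coincides with a graph already known (and one checks it is not a new non-$2$-apex example --- unlike graph iii which yields $H_9$, graphs iv, v, vi should yield only $2$--apex graphs, which is the point of the lemma).

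The main obstacle will be the case analysis in graphs v and vi of Figure~\ref{figG711}, which (as flagged already in the proof of Lemma~\ref{lem91}, ``the argument is a little more involved'') have a less rigid generalised $K_{3,3}$ structure: there may be a choice of which vertex to contract, and some of the $W_i$ may fail to be singletons for the obvious choice, so one must check all partitions and possibly iterate --- reducing $\Gab$ to the graph on the left of Figure~\ref{figG710} or to graph iii, both already handled --- rather than closing the case in one step. The bookkeeping of which of the $\{6,5,5,5,5,5,5,3,3\}$, $\{6,5,5,5,5,5,4,4,3\}$, $\{6,5,5,5,5,4,4,4,4\}$, $\{5,5,5,5,5,5,5,4,3\}$, $\{5,5,5,5,5,5,4,4,4\}$ degree sequences is compatible with a given $\Gab$, and with the forced degree-five vertices, is routine but must be done carefully to be sure no non-$2$-apex graph slips through; I would organize it as a short table keyed to the graphs iv, v, vi and conclude uniformly that each forces a planar $G-a',b'$.
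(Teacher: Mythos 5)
Your overall frame (contradiction, Lemma~\ref{lem9.1}, then Lemma~\ref{lemgenK33} applied to a generalised $K_{3,3}$ with singleton $W_i$'s forcing common neighbours of $a$ and $b$) matches the paper's treatment of the $d(a)=6$ case in outline, but there is a genuine gap in your $d(a)=5$ case. You claim the singleton argument is ``immediately fatal'' there because $\maxd{G}=5$; it is not. Forcing $w\in N(a)\cap N(b)$ only gives $d_G(w)=d_{\Gab}(w)+2$, and if $w$ has degree $3$ in $\Gab$ this is exactly $5$, which is perfectly consistent with the degree sequences of Lemma~\ref{lem9.1} --- indeed this is precisely what happens in the parallel Lemma~\ref{lem9viii}, where the same forcing produces $E_9$ rather than a contradiction. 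Your fallback, that two forced degree-five vertices $w,w'$ give $\|G-w,w'\|=11$, is also not a contradiction, since the hypothesis only requires $\|G-a',b'\|\geq 11$; you acknowledge this (``the extremal value'') and then defer to unspecified ``bookkeeping.'' The paper closes the $d(a)=5$ case by an entirely different and direct device: in graphs iv, v, vi the five vertices of $\Gab$ other than $v_2,w_2$ form a cycle, and since $d(a)=d(b)=5$ with $a,b$ non-adjacent all their neighbours lie on that configuration, so placing $a$ inside the cycle and $b$ outside exhibits a planar $G-a',b'$ outright. Some such explicit planarity input is needed; degree counting alone does not finish this case.

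In the $d(a)=6$ case your sketch is closer, but it stops short of the two decisive facts the paper uses: applying Lemma~\ref{lemgenK33} to $(\Gab;v_2)$ gives $\{w_2,w_3\}\subset N(a)\cap N(b)$, and then for graphs iv and v one checks directly that $G-w_2,w_3$ is planar (contradiction), while for graph vi its non--planarity forces $v_1\in N(b)$, whence $d(v_1)\geq 5$ and Lemma~\ref{lem9.1} (the unique degree-six vertex $a$ is adjacent to every degree-five vertex) pushes $d(v_1)$ to $6$, a contradiction. These verifications are exactly the content you leave to a ``short table,'' so as written the proposal is a plausible plan rather than a proof. A further small slip: falling back on ``reducing $\Gab$ to the left graph of Figure~\ref{figG710} or to graph iii'' does not help --- a ten-edge $\Gab$ is excluded by the hypothesis $\|G-a',b'\|\geq 11$, and graph iii is the case that yields the non-$2$-apex graph $H_9$, so it cannot serve as an already-settled $2$-apex case.
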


\begin{proof}
By Lemma~\ref{lem9.1}, $d(a) = 6$ or $5$. 
If $d(a) = 5$, note that $G - a,b,v_2,w_2$ is a cycle. By placing $a$ inside the cycle and $b$ outside, $G - v_3, w_3$ is planar and $G$ is $2$--apex.
So, we may assume $d(a) =6$ and we are in the first case of Lemma~\ref{lem9.1}.

Assume $G$ is not $2$--apex and apply Lemma~\ref{lemgenK33} to $(\Gab; v_2)$, for which $W_1 = \{u,w_1\}$ and $W_i = \{w_i\}$, $i=2,3$.
If $G$ is graph iv or v, then $G - w_2, w_3$ is planar and $G$ is $2$--apex. So, let $G$ be graph vi.
Then, since $G - w_2, w_3$ is non--planar, either $v_1$ or $v_2$, say $v_1$, is a neighbour of $b$. But, then the degree of $v_1$ in $G$ is at least five. 
We deduce that $av_1 \in E(G)$, for otherwise, $d(v_1) = 5$ and, by Lemma~\ref{lem9.1}, $v_1$ is adjacent to $a$, a contradiction. However,
as $a$ is adjacent to $v_1$, $d(v_1) = 6$ which again contradicts Lemma~\ref{lem9.1} as $a$ is the unique vertex of degree six. The contradiction 
shows that $G$ is $2$--apex.
\end{proof}

\begin{figure}[ht]
\begin{center}
\includegraphics[scale=0.5]{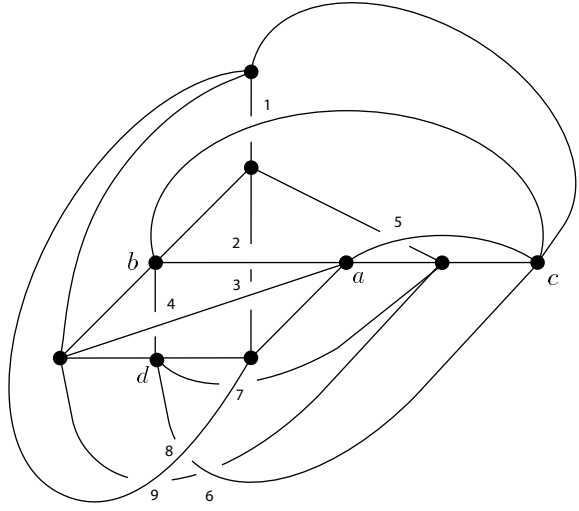}
\caption{An unknotted embedding of the graph $E_9$.\label{figE9}}
\end{center}
\end{figure}

\begin{lemma} \label{lem9viii}%
Let $G$ be a graph with $|G| = 9$, $\|G \| = 21$, and $\mind{G} \geq 3$. 
Suppose that there
are vertices $a$ and $b$ such that $\Gab$ is graph viii of Figure~\ref{figG711} and for any pair of vertices $a'$ and $b'$, 
$\| G-a',b' \| \geq 11$. If $G$ is not $2$--apex, then $G$ is $E_9$.
\end{lemma}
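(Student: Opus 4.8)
The plan is to run the same template as Lemmas~\ref{lem9iii} and \ref{lem9ivtovi}. First I would invoke Lemma~\ref{lem9.1}: since $\|G\| = 21$, $\mind{G} \geq 3$, and $\|G - a',b'\| \geq 11$ for all pairs with equality at $a,b$, we are in one of its two cases, so $d(a) = 6$ or $d(a) = 5$, the degree sequence of $G$ is one of the five listed there, and $a,b$ may be chosen with $\mind{\Gab} \geq 2$ — which is consistent with $\Gab$ being graph viii of Figure~\ref{figG711}, since that is one of the graphs of minimum degree two. Label the vertices of $\Gab$ as in the figure by $v_1,v_2,v_3$, $w_1,w_2,w_3$ and the extra vertex $u$, fixing the generalised $K_{3,3}$ structure of graph viii.

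For one of the two cases (I expect $d(a) = 5$, as in Lemma~\ref{lem9ivtovi}) I would dispose of $G$ directly: delete from $\Gab$ the pair of vertices whose removal leaves a cycle, so that the corresponding $G - x,y$ consists of $a$ and $b$ attached to that cycle, which is planar with $a$ drawn inside and $b$ outside; hence $G$ is $2$-apex and this case does not arise. That leaves the case $d(a) = 6$, which is the first case of Lemma~\ref{lem9.1}: $a$ is the unique degree-six vertex, it is adjacent to every degree-five vertex (including $b$), and $\|\Gab\| = 11$. Now assume $G$ is not $2$-apex. Apply Lemma~\ref{lemgenK33} to an appropriate generalised $K_{3,3}$ pair $(\Gab;c)$ read off from graph viii (as $(\Gab;v_2)$ was used in Lemma~\ref{lem9ivtovi}, with $W_1 = \{u,w_1\}$ and $W_i = \{w_i\}$): this forces the singleton $W_i$'s, and a representative of any non-singleton $W_i$, into $N(a) \cap N(b)$. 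Then, exactly as in Lemma~\ref{lem9iii}, play the non-planarity of $G - x,y$ for several judicious choices of $x,y$ (deleting vertices that lie in different parts of the $K_{3,3}$ data) against the degree bounds: each such $G - x,y$ must be non-planar, and this pins down the remaining neighbours of $a$, then those of $b$, one at a time. Throughout I would use that $a$ is the unique degree-six vertex and is adjacent to every degree-five vertex to kill any branch that would create a second degree-six vertex or a degree-five vertex missing from $N(a)$, just as in the final paragraph of Lemma~\ref{lem9ivtovi}. Once $N(a)$ and $N(b)$ are completely determined, I would check the resulting graph is isomorphic to $E_9$ by matching it against Figure~\ref{figE9} (equivalently, against its description as the Y--triangle child of $F_{10}$).

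The main obstacle will be the bookkeeping in the $d(a) = 6$ case. Graph viii has enough vertices, and the constraint $\|G - a',b'\| \geq 11$ is loose enough, that after the first application of Lemma~\ref{lemgenK33} several placements of the edges at $a$ and $b$ survive; I expect to need a short case split on which vertex of $\Gab$ is the ``extra'' neighbour of $a$, taken up to the automorphisms of graph viii, and in all but one branch to exhibit a planar two-vertex deletion, the surviving branch being precisely the $E_9$ configuration. The step I expect to be delicate is verifying that this last configuration really is $E_9$ and not merely a graph with the same degree sequence, so I would spell out an explicit isomorphism with Figure~\ref{figE9}.
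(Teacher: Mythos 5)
Your overall template (Lemma~\ref{lem9.1}, then Lemma~\ref{lemgenK33}, then pinning down $N(a)$ and $N(b)$) is the right one, but your case split is backwards, and this is a genuine gap rather than a bookkeeping issue. The exceptional graph $E_9$ lives precisely in the $d(a)=5$ case: in $E_9$ the two added vertices are non-adjacent, both of degree five, with $N(a)=N(b)=\{u,v_2,v_3,w_2,w_3\}$. So no planarity trick can ``dispose of'' the $d(a)=5$ branch, and in fact the drawing you propose is not even available for graph viii: that graph is $K_{3,3}$ together with a vertex $u$ adjacent only to $v_1$ and $w_1$, and no deletion of two of its vertices leaves a cycle (either $u$ drops to degree at most one, or it sits on a triangle/chorded configuration), unlike graphs iv--vi where $\Gab - v_2,w_2$ is a cycle. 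If you eliminate $d(a)=5$ as planned, your surviving $d(a)=6$ branch cannot produce $E_9$, and the lemma's conclusion is lost.

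The actual argument is shorter than the case analysis you anticipate. Graph viii carries two generalised $K_{3,3}$ structures: one with $W_1=\{u,w_1\}$, $W_i=\{w_i\}$ for $i=2,3$, and one (via $(\Gab;w_1)$) with $V_1=\{u,v_1\}$, $V_i=\{v_i\}$. Applying Lemma~\ref{lemgenK33} to both forces $\{v_2,v_3,w_2,w_3\}\subset N(a)\cap N(b)$ and forces each of $N(a)$, $N(b)$ to meet both $\{u,w_1\}$ and $\{u,v_1\}$. If $d(a)=6$, then by the first case of Lemma~\ref{lem9.1} $a$ and $b$ are adjacent with $d(b)=5$, so $b$ has only four neighbours outside $\{a,b\}$; these are already exhausted by $v_2,v_3,w_2,w_3$, leaving no neighbour in $\{u,w_1\}$ --- a contradiction, so $d(a)=6$ is impossible. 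In the surviving case $d(a)=d(b)=5$ each of $a,b$ has exactly one remaining slot, which must serve both $\{u,w_1\}$ and $\{u,v_1\}$ simultaneously, forcing $u\in N(a)\cap N(b)$ and hence $N(a)=N(b)=\{u,v_2,v_3,w_2,w_3\}$; this determines $G$ completely and it is $E_9$ (here your instinct to exhibit an explicit isomorphism with Figure~\ref{figE9} is reasonable, but there is no residual freedom to worry about, since every edge of $G$ has been specified). So the fix is to swap the roles of the two cases: counting kills $d(a)=6$, and the double application of Lemma~\ref{lemgenK33} does all the work in the $d(a)=5$ case.
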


\begin{proof}
By Lemma~\ref{lem9.1}, $d(a) = 6$ or $5$. Also, since $u$ has degree three or more in $G$, at least one of $a$ and $b$ is adjacent to $u$.

Assume $G$ is not $2$--apex and apply Lemma~\ref{lemgenK33} using $W_1 = \{u,w_1\}$ and $W_i = \{w_i\}$, $i = 2,3$,
to see that  $\{w_2,w_3 \} \subset N(a) \cap N(b)$ and that $N(a)$ and $N(b)$ both intersect $W_1$.
Similarly $(\Gab; w_1)$ shows $\{v_2,v_3\} \subset N(a) \cap N(b)$ and both $a$ and $b$ have a neighbour in $V_1 = \{u,v_1\}$.
If $d(a) = 6$, then $|N(b) \cap (V(G) \setminus \{a,b\})| = 4 $, which contradicts what we already know about 
$N(b)$. So, it must be that $d(a) = 5$, from which it follows that $N(a) = N(b) = \{u, v_2, v_3, w_2, w_3 \}$ and that $G = E_9$.
\end{proof}

Having treated graphs containing an induced subgraph as in Figure~\ref{figG711}, we are ready to 
prove Proposition~\ref{prop921$2$--apex}.

\setcounter{section}{1}
\setcounter{theorem}{5}

\begin{prop} 
%\label{prop921$2$--apex}%
 Let $G$ be a graph with $|G| = 9$ and $\|G \| \leq 21$. If $G$ is not $2$--apex, then $G$ is either $E_9$ or else one of the following IK graphs: $K_7 \sqcup K_1 \sqcup K_1$, $H_8 \sqcup K_1$, $F_9$, or $H_9$.
\end{prop}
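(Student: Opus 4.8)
The plan is to reduce the statement to the cases already handled by Lemmas~\ref{lem9iii}, \ref{lem9ivtovi}, and~\ref{lem9viii}, together with a direct analysis of the remaining configurations. First I would dispose of the easy reductions: by Proposition~\ref{prop920} we may assume $\|G\| = 21$, and by Lemma~\ref{lemd12} we may assume $\mind{G} \geq 3$. If $\mind{G} = 3$ forces a degree-one or degree-two vertex somewhere after deletions, then the topological simplification has fewer edges and we are done by the results on $20$ edges; so the real content is the case $\mind{G} \geq 3$ throughout. Computing the degree sum as $42$, we get $4 \leq \maxd{G} \leq 8$. The strategy of the whole section applies: assume $G$ is not $2$--apex, so $\Gab$ is non-planar for every pair $a,b$; then by Lemma~\ref{lemnonpl} every $\Gab$ has at least $11$ edges, i.e.\ $\|G - a',b'\| \geq 11$ for all pairs $a',b'$.

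Next I would stratify by $\maxd{G}$. If $\maxd{G} \geq 7$ there are vertices $a,b$ with $\|\Gab\| < 11$ (choosing $a$ of degree $\geq 7$ and $b$ a non-neighbour of suitable low degree, being careful about whether such a $b$ exists) — this should give a contradiction or a direct planar $\Gab$ much as in the earlier twelve- and eleven-vertex propositions, identifying that such a $G$ must be $K_7 \sqcup K_1 \sqcup K_1$ or one of its triangle--$Y$ descendants, which are IK and are the listed exceptions. If $\maxd{G} = 6$ or $5$, I would invoke Lemma~\ref{lem9.1}, which pins down $a,b$ (degrees $(6,5)$ or $(5,5)$), pins down the finite list of degree sequences, and guarantees $\mind{\Gab} \geq 2$. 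Then $\Gab$ has $9$ vertices and $11$ edges with $\mind{\Gab} \geq 1$, and after deleting the one remaining low-degree vertex of $\Gab$ (using $\mind{\Gab} \geq 2$ only a single degree-two vertex can survive a smoothing, etc.) we land on a graph on seven vertices and eleven edges, hence one of the nine graphs of Figure~\ref{figG711}; the constraint $\mind{\Gab} \geq 2$ cuts this to graphs iii, iv, v, vi, viii. These five cases are exactly Lemmas~\ref{lem9iii}, \ref{lem9ivtovi}, \ref{lem9viii}, which yield either $2$--apex, or $H_9$, or $E_9$. Finally the case $\maxd{G} = 4$ forces every vertex to have degree $4$ except a couple of degree-$3$ vertices (degree sequence $\{4,4,4,4,4,4,4,3,3\}$ or $\{4,4,4,4,3,3,3,3,3\}$ — actually $\sum = 42$ with max $4$ gives seven $4$'s and two $3$'s), and here I would pick two non-adjacent degree-four vertices $a,b$ (arguing they exist unless $G$ is a disjoint union of small pieces, all of which are $2$--apex), note $\|\Gab\| = 13$ with $\maxd{\Gab} \leq 4$, and run the same smoothing-plus-Figure-lookup argument — this time landing among the graphs of Figures~\ref{figG710}, \ref{figG711}, \ref{figG811} or a tree-plus-$K_{3,3}$, each handled by the generalised $K_{3,3}$ lemma (Lemma~\ref{lemgenK33}) to force some vertex into $N(a) \cap N(b)$ with impossibly large degree.

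The main obstacle I anticipate is the bookkeeping in the $\maxd{G} = 4$ case and in verifying that, once $a$ and $b$ are deleted and the result simplified to a seven- or eight-vertex non-planar graph, one has correctly enumerated which figure-graphs are compatible with the degree and adjacency constraints carried over from $G$; in particular one must track, for each surviving low-degree vertex of $\Gab$, how many of its missing edges went to $a$ versus $b$, since this controls $N(a), N(b)$ and hence where Lemma~\ref{lemgenK33} forces neighbours. A secondary subtlety is confirming that the exceptional graphs $K_7 \sqcup K_1 \sqcup K_1$, $H_8 \sqcup K_1$, $F_9$, $H_9$, $E_9$ genuinely fail to be $2$--apex (so the proposition is sharp) — but this can be cited from the $K_7$-family literature~\cite{KS} for the first four, and for $E_9$ a short direct check that deleting any two vertices leaves a non-planar graph. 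I would organize the write-up as: reductions; the $\maxd{G} \geq 7$ paragraph; the $\maxd{G} \in \{5,6\}$ paragraph reducing to the four preceding lemmas; the $\maxd{G} = 4$ paragraph; and a closing remark that $\|G\| < 21$ is covered by Proposition~\ref{prop920}.

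\setcounter{section}{3}
\setcounter{theorem}{18}
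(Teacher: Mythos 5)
Your overall skeleton (reduce to $\|G\|=21$, get a pair $a,b$ with $\|\Gab\|\leq 11$, identify $\Gab$ with one of the figure graphs, and hand the $11$--edge cases with $\mind{\Gab}\geq 2$ to Lemmas~\ref{lem9.1}--\ref{lem9viii}) matches the paper, but there are two genuine gaps, and both concern exactly the exceptional graphs the proposition is supposed to produce. First, you cannot invoke Lemma~\ref{lemd12} to assume $\mind{G}\geq 3$: its hypothesis is that \emph{every} graph on $n$ vertices and at most $e$ edges is $2$--apex, which is false at $21$ edges (e.g.\ $K_7\sqcup K_1$ on eight vertices). Vertices of degree one or two are indeed harmless, since deleting or smoothing them drops to $20$ edges and Proposition~\ref{prop820} applies, but a degree--zero vertex is not: deleting it leaves an eight--vertex, $21$--edge graph that may fail to be $2$--apex, and by Proposition~\ref{prop821} together with the eight--vertex IK classification this is precisely how $K_7\sqcup K_1\sqcup K_1$ and $H_8\sqcup K_1$ enter the list. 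Your plan instead tries to recover these graphs in a ``$\maxd{G}\geq 7$'' stratum, where they do not live (both have maximum degree six and an isolated vertex, contradicting your standing assumption $\mind{G}\geq 3$), so as written they are never produced.

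Second, your $\maxd{G}\in\{5,6\}$ branch goes directly to Lemma~\ref{lem9.1}, whose hypothesis is $\|G-a',b'\|\geq 11$ for \emph{all} pairs. But with $\maxd{G}=6$ or $7$ one can have a pair with $\|\Gab\|=10$ (two adjacent vertices of degree six, or a degree--seven vertex with a suitable partner), and then $\Gab$ is one of the two graphs of Figure~\ref{figG710}. This is not a case you can wave off as ``a contradiction or a direct planar $\Gab$'': the paper's analysis of the right--hand graph of Figure~\ref{figG710}, via Lemma~\ref{lemgenK33} applied at $v_1$ and $w_1$, is exactly where $F_9$ arises (with $d(a)=d(b)=6$). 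Your outline never reaches $F_9$. Two smaller points: with $\mind{G}\geq 3$ and degree sum $42$ on nine vertices, $\maxd{G}\geq 5$, so your entire $\maxd{G}=4$ paragraph is vacuous (and its degree--sequence arithmetic, $7\cdot 4+2\cdot 3=34\neq 42$, is off); and $\Gab$ has seven vertices, not nine, so no further smoothing is needed to land in Figure~\ref{figG711} --- the constraint $\mind{\Gab}\geq 2$ from Lemma~\ref{lem9.1} alone cuts the list to graphs iii, iv, v, vi, viii. Sharpness of the list (that the five exceptional graphs really are not $2$--apex) is not required by the statement, so your second worry can be dropped.
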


\setcounter{section}{3}
\setcounter{theorem}{6}

\begin{proof}
By Theorem~\ref{thmain}, we can assume $\|G \| = 21$.

As in the proof of Lemma~\ref{lemd12}, $\mind{G} \geq 3$ unless $G$ has a vertex of degree lower than three whose deletion 
(or smoothing in the case of a degree two vertex) results in a graph that is not $2$--apex. As all graphs of 20 edges are $2$--apex, this is possible 
only in the case that $G$ has a degree zero vertex; deleting that vertex must result in a graph on eight vertices with 21 edges
that is not $2$--apex. 
By Proposition~\ref{prop821} such a graph is IK and, by the classification of knotting of
eight vertex graphs, we conclude that $G$ is either the union of $K_7$
with two degree zero vertices, $K_7 \sqcup K_1 \sqcup K_1$, or else
$G$ is $H_8 \sqcup K_1$, where $H_8$ is the graph obtained by a single
triangle--Y move on $K_7$ (see~\cite{KS}).

In other words, so long as $G \neq K_7 \sqcup K_1 \sqcup K_1$ and 
$G \neq H_8 \sqcup K_1$, we can assume $\mind{G} \geq 3$. 
Also, $5 \leq \maxd{G} \leq 8$. Now, 
if $\maxd{G} = 5$, there are at least six degree five vertices
and, therefore, there must be a pair of non--adjacent degree five
vertices. Thus, whatever the maximum degree $\maxd{G}$, by appropriate choice of vertices 
$a$ and $b$, we may assume $\Gab$ has at most 11 edges. 

If $\maxd{G} = 8$, then $G$ is $2$--apex. Indeed, if $a$ has degree
eight, then, as $\|G \| =21$, there is a vertex $b$ with $d(b) \geq 5$.
This means $\Gab$ has at most nine edges and,  
by Lemma~\ref{lemnonpl}, is planar. So we'll assume $\Gab$ has at most
11 edges and that $7 \geq d(a) \geq d(b).$
Assume $G$ is not $2$--apex; then $\Gab$ is non--planar. By Remark~\ref{rmknonpl}, 
$\Gab$ is one of the two graphs in Figure~\ref{figG710} or one of the
nine in Figure~\ref{figG711}.

Suppose first that $\Gab$ is the graph at left in  Figure~\ref{figG710}. 
Since $\| \Gab \| = 10$, 
we can assume that $d(a) = 7$ or $6$ and $d(b) \leq 6$.
As $u$ has degree three in $G$, both $a$ and $b$ are adjacent to $u$. 
By Lemma~\ref{lemgenK33}, $\{ w_1, w_2, w_3 \} \subset N(b)$. Without loss of
generality, we can assume $aw_1 \in E(G)$. Then $G - a, w_1$ is planar and
$G$ is $2$--apex.

If $\Gab$ is the graph at right in Figure~\ref{figG710}, then,
as $u$ has degree three or more in $G$, 
at least one of $a$ and $b$ is a neighbour of $u$.
Again, $\|\Gab\|=10$ so $d(a) = 7$ or $6$ and $d(b) \leq 6$.
Applying Lemma~\ref{lemgenK33} with $W_1 = \{u, w_1\}$ and $W_i = \{w_i\}$, $i = 2,3$,
we see that $N(a)$ and $N(b)$ each include at least one vertex from each $W_i$.
Similarly, $(\Gab;w_1)$ shows $N(a)$ and $N(b)$ each include at least one vertex from each of $V_1 = \{u, v_1\}$ and $V_i = \{v_i\}$. $i = 2,3$.
In particular, we conclude that $\{v_2,v_3,w_2,w_3\} \subset N(a) \cap N(b)$.
Then $G - w_2, w_3$ is a non--planar graph on seven vertices and eleven edges, i.e., one of the graphs 
in Figure~\ref{figG711}. 

In particular, if $d(a) = 7$, then the degree
of $a$ in $G - w_2, w_3$ is five. The only graph of 
Figure~\ref{figG711} with
a degree five vertex is i. However, in that graph, the degree five vertex
is adjacent to a degree one vertex which is not a possibility for $a$. 
So, we conclude $G - w_2, w_3$ is planar and $G$ is $2$--apex if $d(a) = 7$.

Thus, we can assume $d(a) = 6$ and $d(b) = 5$ or $6$. If $d(b) = 5$, then the discussion above shows that $N(b) = \{u,v_2,v_3,w_2,w_3\}$ and
$G - v_3, w_3$ is planar (so that $G$ is $2$--apex). If $d(b) = 6$, then
$ab \in E(G)$ which implies $N(a) \cap N(b) =  \{u,v_2,v_3,w_2,w_3\}$ 
and $G = F_9$.

We can now assume that there is a pair of vertices $a$ and $b$ such
that $\| \Gab \| = 11$ and $\Gab$ is one of the nine graphs in Figure~\ref{figG711}. Moreover, we can also posit that for any pair of 
vertices $a'$, $b'$, $\|G - a',b' \| \geq 11$, for otherwise the subgraph 
has ten vertices (in order to ensure it is non--planar, see Lemma~\ref{lemnonpl}), 
which is the case we just treated above. Lemma~\ref{lem9.1} describes 
the possible degrees for such a graph. In particular, $\mind{\Gab} \geq 2$ 
and since $G$ is not $2$--apex, $\Gab$ must be graph iii, iv, v, vi, or viii in Figure~\ref{figG711}. 
Lemmas~\ref{lem9iii} through \ref{lem9viii} show that in those cases, if $G$ is not $2$--apex, 
then $G$ is $E_9$ or $H_9$. This completes the proof. 
\end{proof}

Finally, we prove Proposition~\ref{prop921IK}.

\setcounter{section}{1}
\setcounter{theorem}{6}

\begin{prop} Let $G$ be a graph with $|G| = 9$ and $\|G \| \leq 21$.
Then $G$ is IK iff it is $K_7 \sqcup K_1 \sqcup K_1$, $H_8 \sqcup K_1$, $F_9$, or $H_9$.
\end{prop}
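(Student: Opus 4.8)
The plan is to leverage the two preceding propositions together with what is already known about minor-minimal IK graphs. The key point is that Proposition~\ref{prop921$2$--apex} tells us the only graphs with $|G|=9$, $\|G\|\le 21$ that fail to be $2$--apex are $E_9$ and the four graphs $K_7\sqcup K_1\sqcup K_1$, $H_8\sqcup K_1$, $F_9$, and $H_9$. Since $2$--apex implies not IK (by the Proposition of \cite{BBFFHL},\cite{OT} quoted in the introduction), every graph in our range other than these five is automatically not IK. So the proof reduces to two tasks: (1) confirm the four $K_7$-descendants are genuinely IK, and (2) confirm $E_9$ is not IK.

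For task (1): each of $K_7\sqcup K_1\sqcup K_1$, $H_8\sqcup K_1$, $F_9$, and $H_9$ is obtained from $K_7$ by a sequence of triangle--Y moves (with, in the first two cases, the addition of isolated vertices, which affects neither IK-ness nor anything else topological). Since $K_7$ is intrinsically knotted (Conway--Gordon), and since intrinsic knotting is preserved under triangle--Y moves by \cite{MRS}, each of these four graphs is IK. Adding degree-zero vertices clearly preserves IK. This is essentially a citation, not an argument.

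For task (2): $E_9$ admits an unknotted embedding --- namely the one exhibited in Figure~\ref{figE9}, provided by Naimi \cite{N}. One should point out that a single unknotted embedding is exactly what is needed to show a graph is \emph{not} IK: IK means \emph{every} tame embedding contains a knotted cycle, so one embedding with all cycles unknotted is a complete witness to not-IK. Hence $E_9$ is not IK.

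Putting these together: the ``if'' direction is tasks (1) and (2) (the four $K_7$-descendants are IK, and nothing else in the list is claimed IK). The ``only if'' direction is the combination of Proposition~\ref{prop921$2$--apex} and the fact that $2$--apex $\Rightarrow$ not IK: any $G$ with $|G|=9$, $\|G\|\le 21$ that is not one of the five exceptional graphs is $2$--apex, hence not IK; and among the five exceptional graphs, $E_9$ is not IK, leaving exactly the four $K_7$-descendants as the IK graphs. The main (and only real) obstacle is verifying the unknotted embedding of $E_9$ actually works --- i.e., checking that every cycle in Naimi's embedding is unknotted --- but since that embedding is supplied and cited, in the write-up this amounts to referring to Figure~\ref{figE9} and \cite{N}. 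I would close by remarking that none of these four IK graphs is minor-minimal IK in a new way: each has $K_7$ (or a $K_7$-descendant already on the minor-minimal list, i.e.\ a member of the $K_7$ family) as a minor, so no new minor-minimal IK graph arises here.

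\begin{proof}
By Theorem~\ref{thmain}, every graph with $\|G\| \leq 20$ is $2$--apex and hence, by the Proposition of \cite{BBFFHL} and \cite{OT}, not IK. So we may assume $\|G\| = 21$.

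Suppose $G$ is IK. Then $G$ is not $2$--apex, so by Proposition~\ref{prop921$2$--apex}, $G$ is one of $E_9$, $K_7 \sqcup K_1 \sqcup K_1$, $H_8 \sqcup K_1$, $F_9$, or $H_9$. But $E_9$ has an unknotted embedding, shown in Figure~\ref{figE9} (see \cite{N}): in this embedding every cycle is unknotted, so $E_9$ is not IK. Therefore $G$ is one of the four graphs $K_7 \sqcup K_1 \sqcup K_1$, $H_8 \sqcup K_1$, $F_9$, or $H_9$.

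Conversely, each of these four graphs is IK. The graph $K_7$ is IK by Conway--Gordon, and $H_8$, $F_9$, and $H_9$ are obtained from $K_7$ by triangle--Y moves (see \cite{KS}); since intrinsic knotting is preserved under triangle--Y moves \cite{MRS}, each of $H_8$, $F_9$, and $H_9$ is IK. Adjoining isolated vertices does not affect intrinsic knotting, so $K_7 \sqcup K_1 \sqcup K_1$ and $H_8 \sqcup K_1$ are IK as well.
\end{proof}

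\begin{rmk}
Each of the four IK graphs in Proposition~\ref{prop921IK} has a member of the $K_7$ family as a minor (indeed is, up to isolated vertices, such a graph), and every graph in that family that is minor minimal IK is already known. Hence there is no new minor minimal IK graph among graphs with nine vertices and at most 21 edges.
\end{rmk}
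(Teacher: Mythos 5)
Your proposal is correct and follows essentially the same route as the paper: reduce to $\|G\|=21$ via the 20-edge result, invoke Proposition~\ref{prop921$2$--apex} together with the fact that $2$--apex graphs are not IK, dispose of $E_9$ by its unknotted embedding (Figure~\ref{figE9}), and note that the four $K_7$-descendants are IK by the known results cited in \cite{KS}. The only difference is cosmetic: you spell out the Conway--Gordon plus triangle--Y preservation argument where the paper simply cites \cite{KS}.
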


\setcounter{section}{3}
\setcounter{theorem}{6}

\begin{proof} 
It follows from \cite{KS} that, if $G$ is one of the four listed
graphs, then it is IK.
 
By Proposition~\ref{prop920}, $G$ is $2$--apex and, therefore, not IK when $\|G \| \leq 20$. In case $\| G \| = 21$, Proposition~\ref{prop921$2$--apex} shows $G$ is $2$--apex and not IK unless $G$ is one of the four listed graphs or $E_9$.
However, Figure~\ref{figE9} is an unknotted embedding of $E_9$. So, if $G$ is IK,
it must be one of the four listed graphs.
\end{proof}

\begin{rmk}
It is straightforward to verify that Figure~\ref{figE9} is an unknotted embedding. For example, here's a strategy for making such a verification. Number the crossings as shown.
It is easy to check that there are 16 possible crossing combinations for a cycle in this graph:
1236, 134, 1457, 1459, 1479, 16789, 234689, 23479, 236789, 25689, 2578, 345689, 3457, 3578, 36789, and 4578.
That is, any cycle in the graph will have crossings that are a subset of one of those 16 sets. For example, a cycle that includes crossings
1, 2, 3, and 6 must include the edges $ab$ and $bc$ and therefore, cannot have the edge $bd$ required for crossing 4. Indeed, 
a cycle that includes 1, 2, 3, and 6 can have none of the other five crossings. To show that there are no knots, consider cycles that correspond to each subset of the 16 sets and check that each such cycle (if such exists) is not knotted in the embedding of Figure~\ref{figE9}.
\end{rmk}

\section*{Acknowledgements}

We thank Ramin Naimi for encouragement and for sharing an unknotted embedding of the graph $E_9$ and
Joel Foisy  for helpful conversations. This study was inspired by the Master's thesis of Chris Morris.

\end{document}